\documentclass[reqno]{amsart}
\usepackage[margin=1in]{geometry}  
\usepackage{graphicx}
\usepackage{amsmath}
\usepackage{amsfonts}
\usepackage{amsthm,bbm}
\usepackage{amssymb}
\usepackage{amscd}
 \usepackage{listings}
\usepackage{times}
\usepackage{comment}
\usepackage{tikz,ifthen}
\usetikzlibrary{arrows, matrix}
\usetikzlibrary{decorations.pathreplacing,decorations.markings}
\usetikzlibrary{cd}
\usepackage{tikz-cd}
\usepackage{calc}
\usetikzlibrary{calc}
\usepackage{stmaryrd}
\usepackage{xcolor}
\usepackage{mathrsfs}
\usepackage[new]{old-arrows}
\usepackage{mathtools}               

\makeatletter                     
\def\slashedarrowfill@#1#2#3#4#5{$\m@th\thickmuskip0mu\medmuskip\thickmuskip\thinmuskip\thickmuskip 
   \relax#5#1\mkern-7mu
   \cleaders\hbox{$#5\mkern-2mu#2\mkern-2mu$}\hfill 
   \mathclap{#3}\mathclap{#2}
   \cleaders\hbox{$#5\mkern-2mu#2\mkern-2mu$}\hfill
   \mkern-7mu#4$
}               
\def\rightslashedarrowfill@{
\slashedarrowfill@\relbar\relbar\mapstochar\rightarrow} 
\newcommand\xslashedrightarrow[2][]{
  \ext@arrow 0055{\rightslashedarrowfill@}{#1}{#2}}  
\newcommand{\proarrow}{\xslashedrightarrow{}} 
\makeatother  

\theoremstyle{definition}
\newtheorem{thm}{Theorem}[section]
\newtheorem{lem}[thm]{Lemma}
\newtheorem{prop}[thm]{Proposition}
\newtheorem{cor}[thm]{Corollary}
\newtheorem{example}[thm]{Example}
\newtheorem{question}{Question}
\newtheorem{mainthm}{Theorem}

\newtheorem{defprop}[thm]{Definition/Proposition}
\newtheorem{definition}[thm]{Definition}
\newtheorem{rem}[thm]{Remark}

\newcommand{\ie}{i.e.\ }
\newcommand{\id}{\operatorname{id}} 
\newcommand{\Gm}{\mathbf{G}_m}
\newcommand{\kk}{\mathsf{k}}

\usepackage[cmtip,all]{xy}
\newcommand{\longsquiggly}{\xymatrix{{}\ar@{~>}[r]&{}}}

\newcommand{\N}{\mathbf{N}}
\newcommand{\Z}{\mathbf{Z}}
\newcommand{\R}{\mathbf{R}}
\newcommand{\SL}{\mathrm{SL}}
\newcommand{\C}{\mathbf{C}}
\newcommand{\GL}{\mathrm{GL}}
\newcommand{\Cat}{\operatorname{Cat}}
\newcommand{\Fun}{\operatorname{Fun}}
\newcommand{\Hom}{\operatorname{Hom}}
\newcommand{\ihom}{\underline{\Hom}}
\newcommand{\End}{\operatorname{End}}
\newcommand{\iend}{\underline{\End}}
\newcommand{\g}{\mathfrak{g}}
\newcommand{\spec}{\mathrm{Spec}}
\newcommand{\im}{\operatorname{im}}

\newcommand{\Id}{\mathrm{Id}}
\newcommand{\acts}{\curvearrowright}

\newcommand{\Bord}{\operatorname{Bord}}
\newcommand{\tglBord}{\operatorname{Bord}_3^{\mathrm{tgl}}}
\newcommand{\Ann}{\mathrm{Ann}}
\newcommand{\ptorus}{{\mathrm{T}^\ast}}
\newcommand{\pSigma}{{\Sigma^\ast}}
\newcommand{\Rep}{\operatorname{Rep}}
\newcommand{\modu}{\mbox{-}\Mod}
\newcommand{\A}{\mathcal{A}}
\newcommand{\M}{\mathcal{M}}
\newcommand{\AM}{{(\A,\M)}}
\newcommand{\opp}{\mathrm{op}}
\newcommand{\Mod}{\operatorname{Mod}}
\newcommand{\Bimod}{\operatorname{Bimod}}
\newcommand{\prl}{\operatorname{Pr}^\mathrm{L}}
\newcommand{\unit}{\mathbf{1}} 
\newcommand{\ind}[1]{\widehat{#1}}
\newcommand{\Ahat}{\ind{\A}}

\newcommand{\Aut}{\operatorname{Aut}}
\newcommand{\Vect}{\operatorname{Vect}}
\newcommand{\totimes}{\widetilde{\otimes}}
\newcommand{\sk}{\operatorname{Sk}}
\newcommand{\skalg}{\operatorname{SkAlg}}

\newcommand{\DqG}{\mathcal{D}_q(G)}
\newcommand{\Oq}{\mathcal{O}_q}
\newcommand{\Dq}{\mathcal{D}_q}
\newcommand{\detq}{\operatorname{det}_q}
\newcommand{\trq}{\operatorname{tr}_q}
\newcommand{\sA}{\mathscr{A}}
\newcommand{\Ocal}{\mathcal{O}}
\newcommand{\Dcal}{\mathcal{D}}
\newcommand{\ract}{\triangleleft}
\newcommand{\lact}{\triangleright}

\usepackage{hyperref}
\hypersetup{
    colorlinks,
    citecolor=blue,
    filecolor=blue,
    linkcolor=blue,
    urlcolor=blue
}

\tikzset{
  strand/.style={semithick}, 
  redstrand/.style={semithick, color=red}, 
  dstrand/.style={semithick,dash pattern=on 0.7 off 1pt},  
  oriented/.style={postaction={decorate},
                   decoration={markings, mark=at position 0.85 with {\arrow{>}}}} 
}

\newcommand{\undercross}[2]{
  \vcenter{\hbox{\begin{tikzpicture}[scale=0.6]
    \draw[#1,oriented] (-0.6,-0.6) -- (0.6,0.6);
    \draw[white,line width=5pt] (0.6,-0.6) -- (-0.6,0.6); 
    \draw[#2,oriented] (0.6,-0.6) -- (-0.6,0.6);
  \end{tikzpicture}}}
}

\newcommand{\overcross}[2]{
  \vcenter{\hbox{\begin{tikzpicture}[scale=0.6]
    \draw[#2,oriented] (0.6,-0.6) -- (-0.6,0.6); 
    \draw[white,line width=5pt] (-0.6,-0.6) -- (0.6,0.6); 
    \draw[#1,oriented] (-0.6,-0.6) -- (0.6,0.6); 
  \end{tikzpicture}}}
}

\newcommand{\parallelstrands}[2]{
  \vcenter{\hbox{\begin{tikzpicture}[scale=0.6]
    \draw[#1] (-0.4,-0.6) -- (-0.4,0.6);
    \draw[#2] (0.4,-0.6) -- (0.4,0.6);
  \end{tikzpicture}}}
}

\newcommand{\singlestrand}[1]{
  \vcenter{\hbox{\begin{tikzpicture}[scale=0.6]
    \draw[#1] (0,-0.6) -- (0,0.6);
  \end{tikzpicture}}}
}
\newcommand{\loopstrandJ}[1]{
  \vcenter{\hbox{\begin{tikzpicture}[scale=0.6]
    \coordinate (C) at (-0.1,0);
    \coordinate (X) at (-0.1,-0.6);
    \coordinate (Y) at (-0.1,0.6);
    \coordinate (A0) at (0.1,0.1);
    \coordinate (A1) at (0.1,-0.3);
    \coordinate (B) at (0.3,-0.1);
    \draw[#1] (A1) [out=180,in=-80] to (C);
    \draw[#1,oriented] (C) [out=100,in=-90] to (Y);
    \draw[white,line width=4.5pt] (X) [out=90,in=-100] to (C);
    \draw[white,line width=4.5pt] (C) [out=80,in=90] to (B);
    \draw[white,line width=4.5pt] (B) [out=-90,in=0] to (A1);
    \draw[#1] (X) [out=90,in=-100] to (C);
    \draw[#1] (C) [out=80,in=180] to (A0);
    \draw[#1] (A0) [out=0,in=90] to (B);
    \draw[#1] (B) [out=-90,in=0] to (A1);
  \end{tikzpicture}}}
}

\newcommand{\circlestrand}[2]{
  \vcenter{\hbox{\begin{tikzpicture}[scale=0.6]
    \ifthenelse{\equal{#2}{ccw}}{
      \draw[#1,oriented] (0,0) circle (0.5);
    }{
      \draw[#1,oriented] (0.5,0) arc[start angle=0,end angle=-360,radius=0.5];
    }
  \end{tikzpicture}}}
}

\newcommand{\Nvertex}[1]{
  \vcenter{\hbox{\begin{tikzpicture}[scale=0.6]
    \tikzset{lowarrow/.style={semithick, postaction={decorate},
                               decoration={markings, mark=at position 0.25 with {\arrow{>}}}}}

    \draw[strand,lowarrow] (-1.2,-1.2) -- (0,-0.2);
    \draw[strand,lowarrow] (-0.7,-1.2) -- (0,-0.2);

    \node at (0,-1.2) {\scriptsize $\cdots$};

    \draw[strand,lowarrow] (0.7,-1.2) -- (0,-0.2);
    \draw[strand,lowarrow] (1.2,-1.2) -- (0,-0.2);

    \filldraw (0,-0.2) circle (2pt);

    \draw[rdstrand,oriented] (0,-0.2) -- (0,1.0);
  \end{tikzpicture}}}
}

\newcommand{\dualNvertex}[1]{
  \vcenter{\hbox{\begin{tikzpicture}[scale=0.6]
    \tikzset{lowarrow/.style={semithick, postaction={decorate},
                               decoration={markings, mark=at position 0.25 with {\arrow{<}}}}}

    \draw[rdstrand,oriented] (0,-1.0) -- (0,-0.2);

    \filldraw (0,-0.2) circle (2pt);

    \draw[strand,lowarrow] (-1.2,0.8) -- (0,-0.2);
    \draw[strand,lowarrow] (-0.7,0.8) -- (0,-0.2);

    \node at (0,0.8) {\scriptsize $\cdots$};

    \draw[strand,lowarrow] (0.7,0.8) -- (0,-0.2);
    \draw[strand,lowarrow] (1.2,0.8) -- (0,-0.2);
  \end{tikzpicture}}}
}

\newcommand{\NtoNvertex}[1]{
  \vcenter{\hbox{\begin{tikzpicture}[scale=0.6]
    \tikzset{
      lowarrow/.style={semithick,postaction={decorate},
                        decoration={markings, mark=at position 0.25 with {\arrow{>}}}},
      dashedarrow/.style={semithick,postaction={decorate},
                           decoration={markings, mark=at position 0.8 with {\arrow{>}}}},
      outarrow/.style={semithick,postaction={decorate},
                        decoration={markings, mark=at position 0.25 with {\arrow{<}}}}
    }

    \draw[strand,lowarrow] (-1.2,-1.5) -- (0,-0.7);
    \draw[strand,lowarrow] (-0.7,-1.5) -- (0,-0.7);
    
    \node at (0,-1.5) {\scriptsize $\cdots$};

    \draw[strand,lowarrow] (0.7,-1.5) -- (0,-0.7);
    \draw[strand,lowarrow] (1.2,-1.5) -- (0,-0.7);

    \filldraw (0,-0.7) circle (2pt);

    \draw[dstrand, dashedarrow] (0,-0.7) -- (0,0.0);

    \filldraw (0,0.0) circle (2pt);

    \draw[strand,outarrow] (-1.2,0.8) -- (0,0.0);
    \draw[strand,outarrow] (-0.7,0.8) -- (0,0.0);

    \node at (0,0.8) {\scriptsize $\cdots$};

    \draw[strand,outarrow] (0.7,0.8) -- (0,0.0);
    \draw[strand,outarrow] (1.2,0.8) -- (0,0.0);
  \end{tikzpicture}}}
}

\newcommand{\Nbox}[1]{
  \vcenter{\hbox{\begin{tikzpicture}[scale=0.6]
    \tikzset{lowarrow/.style={semithick, postaction=
                               {decorate},
                               decoration={markings, mark=at position 0.6 with {\arrow{>}}}}}
    \draw[strand,lowarrow] (-1.2,-1.5) -- (-1.2,-0.5);
    \draw[strand,lowarrow] (-0.8,-1.5) -- (-0.8,-0.5);
    \node at (0,-1) {\scriptsize $\cdots$};
    \draw[strand,lowarrow] (0.8,-1.5) -- (0.8,-0.5);
    \draw[strand,lowarrow] (1.2,-1.5) -- (1.2,-0.5);
    \draw[thick,fill=white] (-1.4,-0.5) rectangle (1.4,0.5);
    \node at (0,0) {$#1$};
    \draw[strand,lowarrow] (-1.2,0.5) -- (-1.2,1.5);
    \draw[strand,lowarrow] (-0.8,0.5) -- (-0.8,1.5);
    \node at (0,1) {\scriptsize $\cdots$};
    \draw[strand,lowarrow] (0.8,0.5) -- (0.8,1.5);
    \draw[strand,lowarrow] (1.2,0.5) -- (1.2,1.5);
  \end{tikzpicture}}}
}

\newcommand{\brmodstrands}{
\vcenter{\hbox{   \begin{tikzpicture}[scale = 1, baseline={([yshift=-.5ex]current bounding box.center)}]
    \draw[redstrand] (.5,-.6) -- (.5,0);
    \draw[white, line width = 5pt] (0,-.6) .. controls (0,-.4) and (.7,-.3) .. (.7, 0);
    \draw[strand, ->- = .55] (0,-.6) .. controls (0,-.4) and (.7,-.3) .. (.7, 0) .. controls (.7,.3) and (0,.4) .. (0,.6);
    \draw[white, line width=5pt] (.5, 0) -- (.5,.6);
    \draw[redstrand] (.5,0) -- (.5,.6);
    \end{tikzpicture}}}
}

\newcommand{\brmodstrandsunoriented}{
\vcenter{\hbox{   \begin{tikzpicture}[scale = 1, baseline={([yshift=-.5ex]current bounding box.center)}]
    \draw[redstrand] (.5,-.6) -- (.5,0);
    \draw[white, line width = 5pt] (0,-.6) .. controls (0,-.4) and (.7,-.3) .. (.7, 0);
    \draw[strand] (0,-.6) .. controls (0,-.4) and (.7,-.3) .. (.7, 0) .. controls (.7,.3) and (0,.4) .. (0,.6);
    \draw[white, line width=5pt] (.5, 0) -- (.5,.6);
    \draw[redstrand] (.5,0) -- (.5,.6);
    \end{tikzpicture}}}
}

\newcommand{\invbrmodstrands}{
\vcenter{\hbox{   \begin{tikzpicture}[scale = 1, baseline={([yshift=-.5ex]current bounding box.center)}]
    \draw[redstrand] (.5,0) -- (.5,.6);
    \draw[white, line width = 5pt] (.7, 0) .. controls (.7,.3) and (0,.4) .. (0,.6);
    \draw[strand, ->- = .55] (0,-.6) .. controls (0,-.4) and (.7,-.3) .. (.7, 0) .. controls (.7,.3) and (0,.4) .. (0,.6);
    \draw[white, line width = 5pt] (.5,-.6) -- (.5, 0);
    \draw[redstrand] (.5,-.6) -- (.5, 0);
    \end{tikzpicture}}}
}

\newcommand{\invbrmodstrandsunoriented}{
\vcenter{\hbox{   \begin{tikzpicture}[scale = 1, baseline={([yshift=-.5ex]current bounding box.center)}]
    \draw[redstrand] (.5,0) -- (.5,.6);
    \draw[white, line width = 5pt] (.7, 0) .. controls (.7,.3) and (0,.4) .. (0,.6);
    \draw[strand] (0,-.6) .. controls (0,-.4) and (.7,-.3) .. (.7, 0) .. controls (.7,.3) and (0,.4) .. (0,.6);
    \draw[white, line width = 5pt] (.5,-.6) -- (.5, 0);
    \draw[redstrand] (.5,-.6) -- (.5, 0);
    \end{tikzpicture}}}
}

\newcommand{\detbrmodstrands}{
\vcenter{\hbox{   \begin{tikzpicture}[scale = 1, baseline={([yshift=-.5ex]current bounding box.center)}]
    \draw[redstrand] (.5,-.6) -- (.5,0);
    \draw[white, line width = 5pt] (0,-.6) .. controls (0,-.4) and (.7,-.3) .. (.7, 0);
    \draw[dstrand, ->- = .55] (0,-.6) .. controls (0,-.4) and (.7,-.3) .. (.7, 0) .. controls (.7,.3) and (0,.4) .. (0,.6);
    \draw[white, line width=5pt] (.5, 0) -- (.5,.6);
    \draw[redstrand] (.5,0) -- (.5,.6);
    \end{tikzpicture}}}
}

\newcommand{\brmod}[3]{
    \draw[redstrand] ($(#1)+(#2,0)$) -- ($(#1)+(#2,#3)$);
    \draw[white, line width = 5pt] (#1) .. controls ($(#1)+(0,{#3/6})$) and ($(#1)+(#2+0.2,{#3/4})$) .. ($(#1)+(#2+0.2,{#3/2})$);
    \draw[strand, ->- = .55] (#1) .. controls ($(#1)+(0,{#3/6})$) and ($(#1)+(#2+0.2,{#3/4})$) .. ($(#1)+(#2+0.2,{#3/2})$) .. controls ($(#1)+(#2+0.2,{3*#3/4})$) and ($(#1)+(0,{5*#3/6})$) .. ($(#1)+(0,#3)$);
    \draw[white, line width=5pt] ($(#1)+(#2,{#3/2})$) -- ($(#1)+(#2,#3)$);
    \draw[redstrand] ($(#1)+(#2,{#3/2})$) -- ($(#1)+(#2,#3)$);
}

\newcommand{\brmodtrace}{
\vcenter{\hbox{   \begin{tikzpicture}[scale = 1, baseline={([yshift=-.5ex]current bounding box.center)}]
    \draw[redstrand] (.3,-.6) -- (.3,0);
    \draw[white,line width=5pt] (0,0) arc (-180:180:.3);
    \draw[strand,->-=.5] (0,0) arc (-180:180:.3);
    \draw[white, line width=5pt] (.3,0) -- (.3,.6);
    \draw[redstrand] (.3,0) -- (.3,.6);
    \end{tikzpicture}}}
}

\newcommand{\brmodtraceunoriented}{
\vcenter{\hbox{   \begin{tikzpicture}[scale = 1, baseline={([yshift=-.5ex]current bounding box.center)}]
    \draw[redstrand] (.3,-.6) -- (.3,0);
    \draw[white,line width=5pt] (0,0) arc (-180:180:.3);
    \draw[strand] (0,0) arc (-180:180:.3);
    \draw[white, line width=5pt] (.3,0) -- (.3,.6);
    \draw[redstrand] (.3,0) -- (.3,.6);
    \end{tikzpicture}}}
}

\newcommand{\cyl}[2]{
\draw (0,0) ellipse[x radius = #1, y radius = 0.5*#1];
\draw (0,#2) ellipse[x radius = #1, y radius = .5*#1];
\draw (-#1,0) -- (-#1,#2);
\draw (#1,0) -- (#1,#2);
}

\newcommand{\kcrossing}{
  \begin{tikzpicture}[baseline=-0.5ex, scale=0.4]
    \draw[thick] (-0.5,0.5) -- (0.5,-0.5);
    \draw[white, line width=4pt] (-0.5,-0.5) -- (0.5,0.5);
    \draw[thick] (-0.5,-0.5) -- (0.5,0.5);
  \end{tikzpicture}
}
\newcommand{\kvertical}{
  \begin{tikzpicture}[baseline=-0.5ex, scale=0.4]
    \draw[thick] (-0.5,-0.5) to[out=45, in=-45] (-0.5,0.5);
    \draw[thick] (0.5,-0.5) to[out=135, in=-135] (0.5,0.5);
  \end{tikzpicture}
}
\newcommand{\khorizontal}{
  \begin{tikzpicture}[baseline=-0.5ex, scale=0.4]
    \draw[thick] (-0.5,-0.5) to[out=45, in=135] (0.5,-0.5);
    \draw[thick] (-0.5,0.5) to[out=-45, in=-135] (0.5,0.5);
  \end{tikzpicture}
}
\newcommand{\kunknot}{
  \begin{tikzpicture}[baseline=-0.5ex, scale=0.4]
    \draw[thick] (0,0) circle (0.35);
  \end{tikzpicture}
}

\begin{document}

\tikzset{->-/.style={decoration={
  markings,
  mark=at position #1 with {\arrow{>}}},postaction={decorate}}}

\title{Skein theory, line defects, and quantum symmetric pairs}

\author{Eric Yen-Yo Chen, David Jordan, and Iordanis Romaidis}

\begin{abstract}
We construct skein theory for 3-manifolds with embedded line defects, starting from the data of a ribbon tensor category and its balanced braided module category. We focus on line defects arising from quantum symmetric pairs, and prove finiteness properties for their defect skein modules. 
As an application, we consider $\Z_2$-equivariant skein theory and establish an equivalence with defect skein theory in certain settings, leading to a skein theoretical construction of a family of $\mathrm{C}^\vee\mathrm{C}$ DAHA-modules in the Type AIII case. 
\end{abstract}
\maketitle

\setcounter{tocdepth}{1}
\tableofcontents

\section{Introduction}\label{sec:intro}

Skein theory is an elementary approach to study 3-manifolds which emerged from the discovery of Jones' knot invariant and Witten's subsequent interpretation in terms of topological quantum field theory. Despite its simplicity, skein theory has recently found connections to contemporary mathematical topics such as cohomological Donaldson--Thomas theory \cite{Abouzaid-Manolescu, BBDJS, BDIKP, GS, KKPS25,KPS24}, (alpha and beta) factorization homology \cite{AFRbeta,AFT17, Cooke, BrownHaioun, Chun-Yu}, cluster algebras \cite{BrownJ, Bonahon-Wong1,Bonahon-Wong2, Fock-Goncharov06,Fock-Goncharov09,Garoufalidis-Yu, JLSS, Le2019,Mul2016,NeitzkeYan,Panitch-Park}, and mathematical gauge theory \cite{Du Pei,JLanglands}. 

The ubiquity of skein theory is due in part to its rather elementary definition, which we now briefly recall.  We focus on the most well-studied case of the \textit{Kauffman bracket skein module} of an oriented 3-manifold introduced by \cite{Przytycki} and \cite{tur88}, associated to the group $\SL_2$.

\begin{definition} \label{definition KB skein relations} The \textit{Kauffman bracket skein module} of a closed oriented 3-manifold $M$, denoted $\mathrm{Sk}_{\mathrm{KB}}(M)$, is defined as the $\C[q^{\pm1/2}]$-linear span of framed link embeddings in $M$ modulo isotopy and local relations provided by the following two Kauffman bracket relations
\begin{enumerate}
    \item Crossing resolution:
    \begin{equation}\label{eq:KB-sk-relations1}
    \big\langle \kcrossing \big\rangle = q^{1/2} \left\langle \, \kvertical \,  \right\rangle + q^{-1/2} \big\langle \, \khorizontal \, \big \rangle
    \end{equation}

    \item Normalization:
    \begin{equation}\label{eq:KB-sk-relations2}
    \big\langle \,  \kunknot  \, \big\rangle = -(q+q^{-1})
    \end{equation}
\end{enumerate}
\end{definition}

The construction of the Kauffman bracket skein module generalises in two ways, both of which we will require.  Firstly, from an algebraic perspective: the construction of skein modules allows for the algebraic input of a ribbon tensor category $\A$ over commutative ring $R$ which both decorates the skeins in the relevant 3-manifold $M$ and also provides a coherent set of local relations. More precisely, the skein module $\sk_\A(M)$ is defined as the $R$-linear span of embedded $\A$-colored ribbon graphs in $M$, consisting of edges labelled by $\A$-objects and vertices labelled by $\A$-morphisms. Local relations are then encoded by Reshetikhin--Turaev graphical calculus. Our favourite class of examples will be the category of Type I representations of quantum groups, denoted $\Rep_q(G)$, where $G$ is a reductive group, from which the Kauffman bracket skein module may be recovered by setting $\A=\Rep_q(\SL_2)$. 

Secondly, from a topological perspective: the assignment $M\mapsto \sk_\A(M)$ has been extended to surfaces by the definition of the $\A$-skein category $\sk_\A(\Sigma)$, exhibiting skein theory as a 3d (categorified) TQFT \cite{Walker-notes, JohnsonFreyd}, \ie a symmetric monoidal functor 
\[\sk_\A: \Bord_3 \longrightarrow \Bimod\]
from the category of oriented 3-bordisms to the category of bimodules (see Section~\ref{sec:pre}). Such an interpretation brings to bear the powerful formal techniques of factorization homology \cite{AFRbeta,AFT17}, and the communication between 2d/3d has led to significant advances towards the resolution of
Witten's Conjecture on the finite dimensionality of skein modules when $\A = \mathrm{Rep}_q(G)$ \cite{GJS} for generic parameters $q$ and any reductive group $G$, and a full resolution for $G=\SL_2$ in \cite{Belletti-Detcherry,JR}. Interpreting natural algebraic structures that arise, one obtains furthermore conceptual constructions of the finite, the affine, and the double affine Hecke algebra and their representations \cite{MS}, and calculations of skein module dimensions \cite{GJVtori,BJVV}.

The purpose of this paper is twofold: 1) we formalise the construction of skein theory for 3-manifolds with embedded defects of codimension two, and 2) we describe in detail the class of defects arising from quantum symmetric pairs \cite{Noumi,Let02,Let03,BK,Kol14,Kol20}. As an application of this theory, we obtain finiteness results for defect skein modules akin to those for ordinary skein modules. For instance, taking the symmetric pair $(\mathrm{SL}_2, T)$ where $T \subset \mathrm{SL}_2$ is the diagonal torus yields the following defect version of the Kauffman bracket skein module. 
\begin{defprop}
    Let $K \subset M$ be an embedded link in an oriented 3-manifold. The \textit{$(\mathrm{SL}_2,T)$-defect skein module of $(M,K)$} is the $\C[q^{\pm 1/2}]$-linear span of framed link embeddings in $M \setminus K$ modulo isotopy, the Kauffman bracket relations in $M \setminus K$, and the extra defect relations
    \begin{gather} \label{eq:SL2-defect-relation}
 \brmodstrandsunoriented\; + q^{-2} \;  \invbrmodstrandsunoriented \;=\;q^{-1}(t+t^{-1})\;\parallelstrands{strand}{redstrand} \, , \quad \quad \brmodtraceunoriented = (t+t^{-1})  ~  \, \singlestrand{redstrand} 
\end{gather}
    where the red strand represents a local piece of $K$, and $t \in \C[q^{\pm 1/2}]_{(q-1)}$ is a parameter specializing to (a choice of) $\sqrt{-1}$ as $q \to 1$.
\end{defprop}

\begin{mainthm}[Corollary \ref{cor:finiteness-defect-skmod}]\label{thm:def-skmod-finiteness}
Let $K$ be an embedded link in an oriented 3-manifold $M$. Then the $(\mathrm{SL}_2,T)$-defect skein module of $(M,K)$ is finite dimensional when tensored with $\C(q^{1/2})$.
\end{mainthm}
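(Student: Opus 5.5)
The plan is to reduce the finiteness statement to the known finiteness of ordinary skein modules \cite{GJS}, proved there via the skein TQFT and deformation quantization. Write $N(K)$ for a closed tubular neighbourhood of $K$ and $E = M\setminus \mathring N(K)$ for the link exterior, a compact 3-manifold whose boundary is a union of tori $T_1,\dots,T_r$. The defect skein module of $(M,K)$ is obtained by gluing two pieces along these tori. One piece is the ordinary skein module $\sk_\A(E)$ of the exterior, which is a module over the skein category of $\partial E$. The other is a ``defect solid torus'' module $\sk_{\A,\M}(N(K))$, where $\M$ is the balanced braided module category attached to $(\SL_2,T)$. Concretely:
\[\sk_{\A,\M}(M,K)\;\cong\;\sk_\A(E)\otimes_{\sk_\A(\sqcup_i T_i)} \sk_{\A,\M}(N(K)).\]
This excision/gluing statement is the first step. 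It should follow from the TQFT construction of defect skein theory in the body of the paper, as a functor out of a bordism category with codimension-two defects. Equivalently, it holds because skeins near $K$ may be pushed into a collar.

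The second step identifies the boundary data. For the torus, the skein algebra of $T^2\times I$ acts on the defect solid-torus module. In the spirit of \cite{GJS}, it is cleaner to work with the internal-endomorphism algebra over the punctured torus, a quantum differential operator algebra $\Dq(\SL_2)$. The tensor product above is then a quantum Hamiltonian reduction. The key claim here is that $\sk_{\A,\M}(N(K))$ is finitely generated over the relevant torus algebra. More strongly, it should be holonomic: its $q\to1$ specialization is supported on a Lagrangian in the character variety of $T^2$. Classically, the defect relations force the meridian holonomy to lie in the conjugacy class determined by $t\to\sqrt{-1}$, that is, the class of $\mathrm{diag}(i,-i)$, fixed by the symmetric pair. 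Meanwhile the solid torus itself gives no constraint on the longitude beyond commuting with the meridian. Commuting pairs with the meridian in a fixed semisimple regular class form a Lagrangian (one-dimensional) subvariety of the two-dimensional commuting variety.

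The third step argues as in \cite{GJS}. The exterior module $\sk_\A(E)$ is finitely generated and, after localization, holonomic over the boundary quantum torus. This is the Lagrangian property of the image of the character variety of $E$ in that of $\partial E$, as used in \cite{GJS}. Pairing two holonomic modules over a quantized symplectic variety has finite-dimensional Ext/Tor over $\C(q^{1/2})$, by the Kashiwara–Schapira / Bernstein-type result also used in \cite{GJS}. The defect skein module is the degree-zero term of such a pairing, so it is finite dimensional. An alternative in the $\SL_2$ case is to adapt the argument of \cite{Belletti-Detcherry}: use a Heegaard-type splitting, and show that the defect relations \eqref{eq:SL2-defect-relation} reduce any skein near $K$ to a finite spanning set.

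The main obstacle is the holonomicity, or finite generation plus Lagrangian support, of the defect solid-torus module. This requires an explicit presentation of $\sk_{\A,\M}(N(K))$, which I would attempt via the quantum symmetric pair coideal subalgebra. The first relation in \eqref{eq:SL2-defect-relation} is a quadratic (Hecke-type) relation for the meridian wrapping around $K$. It expresses the square of the braiding with the red strand in terms of lower terms, so the module is spanned by $T$-weight-type elements along the longitude. From this presentation I would read off the characteristic variety. The first thing to try is to show that the module is generated over the longitude subalgebra by finitely many elements, namely powers of the meridian up to degree one.
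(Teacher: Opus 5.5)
Your overall architecture is the paper's. You glue along the boundary torus (Corollary~\ref{cor:def-skmod}), show that the defect solid-torus module $\mathcal{L}_\M$ is holonomic over $\Dq(\SL_2)$ (Theorem~\ref{thm:QSP-holonomic}), and pair it with the internal skein module of the complement. The gap is the step you yourself call the main obstacle: it is left open, and the route you sketch for it does not fit the setting you need.

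Holonomicity is a statement about the internal module over $\Dq(\SL_2)$, whose classical limit is $\mathcal{O}(G\times G)$. The relevant support is therefore the $3$-dimensional subvariety $\{(A,B) : B\in\mathcal{C}_J,\ AB=BA\}\subset G\times G$. Here $A$ is the longitude holonomy, $B$ the meridian holonomy, and $\mathcal{C}_J$ the conjugacy class of $J|_{q=1}$, i.e.\ of $\mathrm{diag}(\sqrt{-1},-\sqrt{-1})$. Your one-dimensional locus in the commuting variety is only the Hamiltonian reduction of this support. Its fibre over $A=\pm 1$ is all of $\mathcal{C}_J$, so already at $q=1$ the module is not finitely generated over the longitude copy of $\mathcal{O}(G)$. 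Your proposed first step (finitely many generators over the longitude subalgebra) can at best hold for the non-internal module over $\sk_\A(T^2)$, which is not the module the holonomicity argument uses.

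The missing idea is that no explicit presentation is needed. Proposition~\ref{prop:LM-formula} gives $\mathcal{L}_\M\simeq\int^{x\in\im(F)}x\otimes\iend(\unit_\M)\otimes x^\ast$. At $q=1$ this becomes $\mathcal{O}(G^\theta)\otimes\mathcal{O}(G^\theta\backslash G)$, with the $\mathcal{O}(G\times G)$-action pulled back along a finite map whose meridian component is $g\mapsto g^{-1}Jg$ (Lemma~\ref{lem:qmm-q=1}). A Nakayama argument as in Lemma~\ref{lem:qmm-surj} then gives finite generation at generic $q$. The support has dimension $\dim G^\theta+\dim(G^\theta\backslash G)=\dim G$.

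For the complement, the input you need is Theorem~\ref{thm:Jordan-Romaidis} of Jordan--Romaidis rather than \cite{GJS}. GJS treat closed manifolds through Heegaard splittings, where the handlebody modules are explicit. They do not show that internal skein modules of knot complements are holonomic over $\Dq(G)$, and the Lagrangian property of the image of the character variety of $E$ does not by itself yield the quantum statement. Jordan--Romaidis prove exactly the two facts your third step uses: holonomicity of $\sk^{\mathrm{int}}_{\SL_2}(M\setminus K)$, and finite dimensionality of its tensor product with any holonomic $\Dq(\SL_2)$-module. This is why the result is unconditional for $\SL_2$, whereas Corollary~\ref{cor:finiteness-defect-skmod} is conditional for general $G$.
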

In the case when the defect knot $K$ is the unknot inside the 3-sphere, the resulting defect skein module is 1-dimensional and is populated by \textit{dichromatic link polynomials}, previously studied by \cite{Hoste-Kidwell, Hoste-Przytycki, Lambropoulou}. 

In the rest of the introduction, we discuss briefly the formalism of codimension two defects via braided modules, relevant aspects of the theory of quantum symmetric pairs, and situate our work in relation to recent developments in the relative Langlands program \cite{BZSV} and shifted geometric quantization \cite{Safronov1}. 

\subsection{Codimension two defects from braided module categories}\label{subsec:intro-defects-from-brmod}

From the point of view of knot theory, it is natural to search for an extension of skein theory to 3-manifolds codimension 2 defects, i.e., closed oriented 3-manifolds with embedded links.\footnote{In 3 dimensions, it is typical to refer to codimension 2 defects as \textit{line defects}.} Intuitively speaking, the defect skein module assigned to a 3-manifold $M$ with codimension 2 defect $K \subset M$ is the universal recipient of polynomial knot invariants in $M \setminus K$ which may braid along the fixed link $K$. Alternatively, we may excise out a normal neighborhood of $K$ from $M$ and regard the defect skein module as an invariant of 3-manifolds with boundary, where each boundary component is a genus 1 surface.

In 2d, one considers analogously oriented surfaces with marked points, and the analogous question of assigning a \textit{skein category with defects} leads \cite{BZBJ18b} to a proposed answer which we briefly recall. By observing the equivalence due to \cite{Cooke}
\[\sk_\A(\Sigma)\simeq \int_{\Sigma} \A\] between the skein category of a surface $\Sigma$ and the factorization homology of $\A$ on $\Sigma$, one observes in line with \cite{BZBJ18b} that codimension two defects should be labelled by balanced braided module categories over $\A$. Given such a braided module category $\M$ over $\A$, we construct defect skein theories in 2d/3d with $\M$-labelled line defects, which assigns defect skein modules to 3-manifolds with defect links. The construction parallels the non-defect case by establishing local relations near the line defect using graphical calculus of $\M$, while imposing the usual skein relations determined by $\A$ in the bulk. Our first main result is the following theorem establishing skein theory with line defects as a 3d categorified TQFT: 
\begin{mainthm}[see Sect. \ref{subsec:sk-def} and Thm.\ \ref{thm:def-sk-TQFT}]\label{thm:intro-def-sk-TQFT}
Let $\A$ be a ribbon tensor category and let $\M$ be a balanced braided module category over $\A$. Skein theory with line defects labelled by the pair $(\A,\M)$ defines a symmetric monoidal functor 
\[\sk_{\AM}: \Bord_3^{\mathrm{tgl}}\longrightarrow \Bimod~\]
from the category of oriented bordisms with codimension two defects to the category of bimodules. 
\end{mainthm}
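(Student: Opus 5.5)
The plan follows the standard construction of skein theory as a categorified TQFT (Walker, Johnson-Freyd), adapted to tangled bordisms. There are three steps: define the defect skein category of a surface with marked points, define the bimodule of a bordism with embedded tangle, and prove that gluing gives composition.

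\emph{Step 1: surfaces.} For an object of $\Bord_3^{\mathrm{tgl}}$, \ie a surface $\Sigma$ with a finite set $P$ of framed, signed marked points, I define $\sk_{\AM}(\Sigma,P)$ as follows.
\begin{itemize}
\item Objects are finite configurations of framed points in $\Sigma\setminus P$ labelled by objects of $\A$, together with a labelling of each point of $P$ by an object of $\M$.
\item Morphisms are $R$-linear combinations of $\AM$-coloured ribbon graphs in $\Sigma\times[0,1]$. The vertical defect strands $P\times[0,1]$ are coloured by $\M$ and carry coupons labelled by $\M$-morphisms; bulk coupons are labelled by $\A$-morphisms.
\item Graphs are taken modulo isotopy rel boundary and modulo local relations. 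Away from the defect these are the Reshetikhin--Turaev relations for $\A$. In a tubular neighbourhood of a defect strand they are the relations given by evaluation in the graphical calculus of $\M$ as a balanced braided module category: an $\A$-strand winding around an $\M$-strand is evaluated by the braided module structure $\sigma$, and a full twist of the defect's framing is evaluated by the balancing.
\end{itemize}
The key local input is a Reshetikhin--Turaev type evaluation functor for ribbon graphs in a cylinder $D^2\times[0,1]$ with one core defect strand (more generally, a disk with one marked point). I expect this to be established in Sect.~\ref{subsec:sk-def}, and I would take it as given there. The point is that the balanced braided module axioms (the two braid-like compatibilities and the twist/balancing compatibility) are exactly the Reidemeister-type moves involving the core. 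This is the analogue of the statement that $\M$-valued diagrams in the punctured disk form the free braided module category.

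\emph{Step 2: bordisms.} For a bordism $(M,T)\colon(\Sigma_0,P_0)\to(\Sigma_1,P_1)$, I set
\[
\sk_{\AM}(M,T)(X_0,X_1)
\]
to be the $R$-span of $\AM$-coloured ribbon graphs in $M$ whose defect part is $T$, with boundary conditions $X_0$ and $X_1$, modulo isotopy rel boundary and the local relations above. Gluing collars on each side makes this a $\sk_{\AM}(\Sigma_0,P_0)$--$\sk_{\AM}(\Sigma_1,P_1)$ bimodule. Diffeomorphism invariance is immediate, and the symmetric monoidal structure comes from disjoint union.

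\emph{Step 3: composition.} The main step is to show that gluing along $(\Sigma_1,P_1)$ induces an isomorphism
\[
\sk_{\AM}(M,T)\otimes_{\sk_{\AM}(\Sigma_1,P_1)}\sk_{\AM}(M',T')\;\cong\;\sk_{\AM}(M\cup M',T\cup T').
\]
\begin{itemize}
\item \emph{Surjectivity.} Put a glued skein in general position: transverse to $\Sigma_1$ away from vertices, with the defect tangle meeting $\Sigma_1$ exactly in $P_1$. Cutting along $\Sigma_1$ then expresses it as a tensor of skeins on the two sides.
\item \emph{Injectivity.} Every isotopy or local relation in the glued manifold can be decomposed into moves supported in balls or defect tubes. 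By a generic-position/Cerf argument, each such move either lies on one side of $\Sigma_1$ or is a passage of a strand or coupon through $\Sigma_1$. The latter are absorbed by the balancing relation of the tensor product over the skein category, since a neighbourhood of $\Sigma_1$ is a morphism in $\sk_{\AM}(\Sigma_1,P_1)$. Local relations in a defect tube that straddle $\Sigma_1$ can be isotoped to one side because $P_1$ is fixed.
\end{itemize}

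I expect the main obstacle to be this injectivity argument near the defect. There one must check that relations generated in tubular neighbourhoods of $T$ are closed under cutting, which requires the local evaluation of Step 1 to be compatible with stacking cylinders. My first approach would be to reduce to $D^2\times[0,1]$ with a core strand, and to prove there that the local skein module equals $\Hom_{\M}$, by the "every relation is local" argument used for ordinary skein categories.
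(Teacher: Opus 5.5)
Your proposal is correct and follows essentially the paper's route. The paper's argument is just the construction in Section~\ref{subsec:sk-def}: the evaluation functor $\mathrm{ev}_\AM\colon \operatorname{Rib}_\AM(\mathbb{D}_\ast)\to\M$, whose kernel supplies the defect local relations, followed by the definitions of defect skein modules, skein categories and their profunctors, exactly as in your Steps 1--2. The paper then states Theorem~\ref{thm:def-sk-TQFT} without writing out the gluing verification, so your Step 3 makes explicit the standard argument it leaves implicit: cutting skeins in general position, absorbing crossings of $\Sigma_1$ into the coend, and sliding defect-tube relations along $T$ to one side.
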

Unpacking slightly, given a $3$-manifold $M$ with an embedded link $L\subset M$, defect skein theory defines the $R$-module $\sk_{\AM}(M,L)$ as the $R$-linear span of ribbon graphs in $M$ that are $\A$-labelled in $M\setminus L$ and $\M$-labelled on $L$, subject to $\A$-local relations away from the defect $L$ and $\M$-local relations near the defect (see Section~\ref{sec:skeins-def} for details). 
Moreover, to a surface $\Sigma$ with a set of defect points $P\subset \Sigma$ the functor in Theorem~\ref{thm:intro-def-sk-TQFT} assigns the (defect) skein category $\sk_{\AM}(\Sigma,P)$ which we further show to be equivalent to the associated stratified factorization homology (see Proposition~\ref{prop:sk-cat-fact}), \ie 
\begin{equation*}
    \sk_\AM(\Sigma,P) \simeq \int_{(\Sigma,P)}{\AM}~.
\end{equation*}

\subsection{Defect skein theory via quantum symmetric pairs}
\label{subsec:intro-QSP-defects}

A rich source of braided module categories is supplied by the quantizations of symmetric subgroups and their associated symmetric spaces. Since the foundational observation of Koornwinder \cite{Koornwinder} that the quantum analogues of symmetric subgroups $G^\theta$ of a reductive group $G$ should not correspond to Hopf subalgebras of $\mathcal{U}_q\g$ but rather to \textit{coideal subalgebras}, families of examples were discovered by the pioneering works \cite{Noumi-Sugitani, Noumi, DNS, Let02, Let03} from the perspective of deformations of rings of functions on the associated symmetric space. 

From a representation-theoretic point of view, it is indeed the structure of a coideal subalgebra on $\mathcal{U}_{\mathbf{c},\mathbf{s}}\mathfrak{g}^\theta \subset \mathcal{U}_q \g$ and not that of a Hopf subalgebra which endows the category $\mathrm{Rep}_q(G^\theta)$ with the structure of a module category over $\mathrm{Rep}_q(G)$\footnote{In the case when the symmetric pair is outer, a $\Z_2$-equivariantization is necessary.}. The coideal subalgebra $\mathcal{U}_{\mathbf{c},\mathbf{s}}\mathfrak{g}^\theta$ depends on a family of parameters $\mathbf{c},\mathbf{s}$ depending on the type and subject to conditions (see Section~\ref{subsec:QSP}). In a series of works \cite{BK, Kol14, Kol20} of Kolb and Balagovic--Kolb, this $\mathrm{Rep}_q(G)$-module was successfully quantized as a \textit{braided module} (or equivalently as an $E_2$-module over the $E_2$-algebra $\mathrm{Rep}_q(G)$). Algebraically, the braiding is encoded in a \textit{universal $K$-matrix} $\mathcal{K} \in \mathcal{U}_{\mathbf{c},\mathbf{s}}\g^\theta \otimes \mathcal{U}_q\g$ which gives an automorphism of the module action: for every $a \in \mathrm{Rep}_q(G)$ and every $m \in \mathrm{Rep}_q(G^\theta)$, we have a braiding
$$\mathcal{K}_{m \ract a}: m \ract a \longrightarrow m \ract a$$
appropriately compatible with the $R$-matrix of $\mathrm{Rep}_q(G)$. Topologically, the element $\mathcal{K}$ encodes local skein relations near a line defect: a skein labelled by a representation $V \in \mathrm{Rep}_q(G)$ should pick up a coupon labeled by $\mathcal{K}_V$ when braided around a defect line. 
These structures have also been studied under the name of \textit{$\iota$quantum groups} \cite{BaoWang1,BaoWang2, BaoShanWangWebster}. 

Given Witten's finiteness conjecture on $G$-skein modules of closed 3-manifolds \cite{GJS} (established for $G = \SL_2$ \cite{Belletti-Detcherry, JR}), one of our motivations to study skein theory with quantum symmetric pair (QSP) defects was the following natural question:
\begin{question}\label{question:finiteness}
Let $M$ be a closed 3-manifold with a defect knot $L\subset M$. What type of defect labels $\M$ (braided module categories over $\A = \Rep_qG$) guarantee that the defect skein module $\sk_{(\Rep_q(G),\M )}(M,L)$ is finite dimensional over $\C(q)$?
\end{question}
The transparent defect $\M = \A$ satisfies the above property since the defect skein module reduces to the non-defect case, but there are plenty of counterexamples as well: take for instance the braided $\A$-module $\int_{\Ann}{\A}$, which gives rise to the (infinite dimensional) $G$-skein module of the open 3-manifold $M \setminus L$. 

Towards answering this question, we establish a crucial gluing property of defect skein theory which relies on the TQFT picture of Theorem \ref{thm:intro-def-sk-TQFT}: for $q$ generic, the skein module may be expressed as 
\[\sk_{(\Rep_q(G),\M )}(M,L)\cong \sk^{\mathrm{int}}_{\Rep_q(G)}(M\setminus L)\otimes_{\DqG} \mathcal{L}_\M,\]
the relative tensor product of the internal skein module of the knot complement $M\setminus L$ and the internal skein module $\mathcal{L}_\M$ of the solid torus with defect core labelled by $\M$, over the internal skein algebra of the punctured torus $\DqG$. Following \cite{JR}, finiteness of the relative tensor product is closely related to the holonomicity of the respective modules. In this direction, we prove 
\begin{mainthm} [Theorem \ref{thm:QSP-holonomic}]\label{thm:holonomicity of LM}
    The $\DqG$\footnote{A $\Z_2$-equivariantization is needed in the outer case.}-module $\mathcal{L}_\M$ is holonomic for braided module categories $\M$ associated to QSPs.  
\end{mainthm}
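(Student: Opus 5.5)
We prove holonomicity by exhibiting $\mathcal{L}_\M$ concretely as a cyclic $\DqG$-module whose annihilator contains a family of relations large enough to cut the support down to a Lagrangian, and then checking this at $q=1$ (or on an associated graded) where it becomes a statement about the symmetric pair $(G,G^\theta)$.

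\textbf{Step 1: presentation of $\mathcal{L}_\M$.} The solid torus with defect core is obtained from the punctured torus times an interval by gluing in a defect-labelled handle. By the TQFT gluing of Theorem~\ref{thm:intro-def-sk-TQFT} and the identification $\sk_\AM(\Sigma,P)\simeq\int_{(\Sigma,P)}\AM$, the internal skein module is the quotient of $\DqG=\Oq(G)\otimes\Oq(G)$ (the two factors being the meridian and longitude loop algebras) by a left ideal. The longitude factor $\Oq(G)$ is quotiented by the defining relations of the coideal: functions vanishing on $K=G^\theta$, i.e.\ we get $\Oq(G)\otimes_{\Oq(G)}\Oq(G/K)$-type quotient. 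The meridian factor is quotiented by the reflection-equation relation: the meridian loop around the defect labelled by $V$ equals the $K$-matrix coupon $\mathcal{K}_V$. Concretely, I expect
\[\mathcal{L}_\M \cong \DqG\big/\DqG\cdot\{\,A - \mathcal{K}\ \text{(matrix entries)},\ \text{coideal relations on } B\,\},\]
or equivalently $\mathcal{L}_\M\cong \Oq(G/K)$-like functions on the quantum symmetric space, with $\Oq(G)$ acting by multiplication and the other factor acting via the $K$-matrix. Taking the invariant (Hamiltonian-reduced) version as in \cite{JR} if needed.

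\textbf{Step 2: filtration and classical limit.} Equip $\DqG$ with the standard filtration (or specialize $q\to1$ using an integral form), so that $\operatorname{gr}\DqG$ is commutative and the associated Poisson variety is $G\times G$ (the character stack chart of the punctured torus, Semenov-Tian-Shansky structure). Then $\operatorname{gr}$ of the annihilator contains the classical limits of the relations: $B\in K$-translates and $A\in$ the image of the classical $K$-matrix, giving a support contained in a subvariety of the form $\{(A,B): B\in \theta\text{-twisted } K\text{-coset},\ A \in \text{fixed conjugacy/symmetric-space locus}\}$, which is the conormal-type Lagrangian of the $G^\theta$-character variety of the solid torus (pairs of commuting elements with the meridian constrained to the $\theta$-twisted class). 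Dimension count: $\dim G/K + \dim K$-orbit data $=\dim G$, half of $\dim(G\times G)$, hence holonomic after the equivariant reduction. In the outer case we run the same argument in the $\Z_2$-equivariantized setting, noting equivariantization does not change support dimension.

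\textbf{Main obstacle.} Step 2's compatibility: one must verify that the quantum relations degenerate to exactly these classical equations, i.e.\ that the left ideal is large enough (no dimension jump from a non-flat deformation) and that the classical $K$-matrix specializes correctly (this is where $t\to\sqrt{-1}$ style parameter conditions enter). I would first try to prove flatness by exhibiting a PBW-type basis of $\mathcal{L}_\M$ via the Letzter–Kolb description of $\mathcal{U}_{\mathbf{c},\mathbf{s}}\g^\theta$ and its known classical limit $U\g^\theta$; failing that, I would bound the support from above only, which suffices, using that holonomicity only needs $\dim\operatorname{Supp}\le\frac12\dim$ together with Gabber's involutivity for the lower bound.
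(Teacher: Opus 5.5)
Your overall strategy (specialize at $q=1$ and show the support is half-dimensional) is the paper's. The locus you reach in Step 2 is also the right one: pairs $(A,B)$ with $A$ conjugate to $\widetilde{J}$ and $AB=BA$, of dimension $\dim G$. But there is a genuine gap: Step 1, on which everything rests, is a guess rather than a derivation, and the guessed relations are wrong.

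\begin{itemize}
\item The meridian around the defect does not act by the matrix $\mathcal{K}$. The coupon $\mathcal{J}_V$ stays on the defect line, so the meridian factor acts on the empty skein through the quantum moment map $\mu\colon\Oq(\widetilde{G})\to\iend(\unit_\M)$. By Lemma~\ref{lem:qmm-q=1}, its classical limit is pullback along $g\mapsto g^{-1}\widetilde{J}g$, a non-constant function on the symmetric space.
\item Likewise the longitude is not confined to $G^\theta$ itself, only to a conjugate of it.
\item Taken literally, your relations cut out $\{\widetilde{J}\}\times G^\theta$ at $q=1$. This is neither conjugation-stable nor of dimension $\dim G$.
\item Worse, the empty skein is $\Oq(\widetilde{G})$-invariant, so its annihilator is a subobject of $\Dq(\widetilde{G})$ in $\Ahat$. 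Containing your relations would force it to contain all their conjugates, whose common zero locus is empty for non-central $\widetilde{J}$.
\end{itemize}
So the presentation you write down would give the zero module, and the correct support in Step 2 does not follow from it.

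Your fallback (an upper bound on the support plus involutivity) is logically sound, and flatness or a PBW basis is not the real obstacle. What the fallback needs is (i) finite generation of $\mathcal{L}_\M$ over $\Dq(\widetilde{G})$, and (ii) an intrinsic identification of $\mathcal{L}_\M$, or of enough of its annihilator, coming from the topology.

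The paper gets both from Proposition~\ref{prop:LM-formula}. The gluing expresses $\mathcal{L}_\M$ as the coend $\int^{x\in\im(F)}x\otimes\iend(\unit_\M)\otimes x^\ast$. Its value at $q=1$ is $\mathcal{O}(G^\theta)\otimes\mathcal{O}(G^\theta\backslash\widetilde{G})$, with $\mathcal{O}(\widetilde{G}\times\widetilde{G})$ acting through a finite map built from the inclusion of $G^\theta$ and $g\mapsto g^{-1}\widetilde{J}g$. Finite generation then lifts to generic $q$ by a Nakayama argument as in Lemma~\ref{lem:qmm-surj}. The support has dimension $\dim G^\theta+\dim(G^\theta\backslash G)=\dim G$, already half of $\dim(\widetilde{G}\times\widetilde{G})$, so no further ``equivariant reduction'' is needed.

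This route also never needs a cyclic presentation, which matters. When $\widetilde{G}^\theta\neq G^\theta$ (e.g.\ type AI), the map $G^\theta\backslash\widetilde{G}\to\widetilde{G}$ is a double cover onto its image. The classical module is then not cyclic, so a quotient of $\Dq(\widetilde{G})$ by a single left ideal is not the natural shape to look for. Finiteness of the map is all the paper's argument uses.
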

The finiteness of the Kauffman bracket defect skein module (Theorem \ref{thm:def-skmod-finiteness}) is then a direct consequence of the preceding Theorem \ref{thm:holonomicity of LM} and the holonomicity results of \textit{op. cit}.

\subsection{Equivariant skein theory and double affine Hecke algebras}

It is well-known, after \cite{GJVtori}, that skein theory on the 2-torus gives rise to modules over the double affine Hecke algebra (DAHA) of Type $\mathrm{A}$ with suitably specialized parameters. 

In \cite{Jordan-Ma}, analogous constructions for the DAHA of Type $\mathrm{C}^\vee \mathrm{C}$ where obtained via purely algebraic means. Towards a topological underpinning of these DAHA modules, Weelinck has developed the theory of \textit{$\Z_2$-equivariant factorization homology} \cite{Wee19,Wee20} in anticipation that skein theory of the $\Z_2$-orbifold torus (with hyperelliptic involution) would play the same role that the 2-torus played in Type $\mathrm{A}$. As a secondary application of our defect skein theory, we argue that $\Z_2$-equivariant skein theory with coefficients in a QSP may be regarded as defect skein theory on the $\Z_2$-quotiented bulk, with defect being the branch locus. These considerations lead to a topological construction of the DAHA modules of Jordan--Ma (see Proposition \ref{prop: DAHA}), and suggest generalizations for DAHAs of other types. We leave a thorough algebraic investigation of the resulting DAHA modules to future work.

\subsection{Relation to recent developments and outlook} 
\label{subsec:intro-discussion}

\subsubsection{Relative Langlands duality}
\label{subsec:intro-rel-Langlands}
Boundary conditions for 4-dimensional $\mathcal{N} = 4$ supersymmetric Yang--Mills theory and consequences of their $S$-duality \cite{Kapustin-Witten, Gaiotto-Witten} play a central role in the emergent arithmetic theory of \textit{relative Langlands duality} in the sense of Ben-Zvi--Sakellaridis--Venkatesh \cite{BZSV}. Metaphorically (or slightly more precisely, via the \textit{arithmetic TQFT} philosophy), one regards a global field $F$ (\ie a number field or the function field of an algebraic curve over a finite field) as an arithmetic 3-manifold, and considers the evaluation of two $S$-dual theories on $F$ to obtain an isomorphism of Hilbert spaces of states. For the purposes of this motivational discussion, we shall refer to these two theories as $\mathcal{A}_G$ and $\mathcal{B}_{\check{G}}$, for a pair of Langlands dual groups $G$ and $\check{G}$. 

The spaces of states assigned by $\mathcal{A}_G$ and $\mathcal{B}_{\check{G}}$ to $F$ admit the following arithmetic descriptions: on the one hand, one considers the space of automorphic functions for a reductive group $G$ defined over $F$,
$$\mathcal{A}_G(F) = \text{ (unramified) automorphic functions on } G/F,$$
and on the other hand, one considers the space of algebraic distributions on the moduli stack of $\check{G}$-valued unramified Galois representations
$$\mathcal{B}_{\check{G}}(F) = \text{ algebraic distributions on } \big\{\pi_1^{\mathrm{\acute{e}t}, \mathrm{ur}}(F) \to \check{G}\big\}.$$
The expected canonical identification of the two vector spaces $\mathcal{A}_G(F)$ and $\mathcal{B}_{\check{G}}(F)$ is the content of the \textit{global arithmetic Langlands correspondence}, which behaves essentially as a spectral decomposition, with $\check{G}$-valued unramified Galois representations labeling the eigenvalues of automorphic functions under the action of a globally defined commutative Hecke algebra. 

One fundamental insight of Ben-Zvi--Sakellaridis--Venkatesh is the identification of the role of \textit{Hamiltonian actions} in this arithmetic TQFT. Given a nice Hamiltonian $G$-space $X$ and a nice Hamiltonian $\check{G}$-space $\check{X}$, one can construct distributions
\begin{equation} \label{eq:arithm-bnd-cond}
    \mathcal{A}_G(X):\mathcal{A}_G(F) \longrightarrow \C \, \text{ and } \,\mathcal{B}_{\check{G}}(\check{X}): \mathcal{B}_{\check{G}}(F) \longrightarrow \C
\end{equation}
which, in many cases, $\mathcal{A}_G(X)$ may be interpreted as an \textit{automorphic period} and $\mathcal{B}_{\check{G}}(\check{X})$ may be interpreted as a \textit{Langlands $L$-function}.  Loosely speaking, the pairs $(G,X)$ and $(\check{G}, \check{X})$ are relative Langlands dual if $\mathcal{A}_G(X)$ and $\mathcal{B}_{\check{G}}(F)$ coincide under the Langlands correspondence $\mathcal{A}_G(F) \simeq \mathcal{B}_{\check{G}}(F)$; in particular, given a Hecke eigenform $f$ with Hecke eigenvalues labeled by a $\check{G}$-valued Galois representation $\varphi_f$, we expect an \textit{automorphic period formula}
\begin{equation} \label{eq:period-formula}
    \langle \mathcal{A}_G(X), f\rangle = \langle \mathcal{B}_{\check{G}}(\check{X}), \varphi_f\rangle
\end{equation}
as a formal consequence of the Plancherel theorem. 

While the precise relationship between skein theory and the arithmetic form of Langlands' theory remains to be understood, one suspects that a suitable deformation of the theories $\mathcal{A}_G$ and $\mathcal{B}_{\check{G}}$ should be analogous to skein theory; in fact, the two theories should become completely topological with respect to generic deformation parameters, and the descriptions of the two sides should become more symmetric. In particular, the deformed theories $\mathcal{A}_{G,q}$ and $\mathcal{B}_{\check{G}, \check{q}}$ where $q$ and $\check{q}$ are Langlands dual deformation parameters participate in a conjectural \textit{skein Langlands duality} \cite{JLanglands}: for certain closed oriented 3-manifolds $M$, one expects an identification
\begin{equation}\label{eq:sk-Langlands}
    \mathcal{A}_{G,q}(M) = \sk_G(M) \overset{?}{\longleftrightarrow} \sk_{\check{G}}(M) = \mathcal{B}_{\check{G}, \check{q}}(M)
\end{equation}
analogous to the spectral decomposition of Langlands. While the validity of \eqref{eq:sk-Langlands} is far from being understood - indeed, at the time of writing, the available supporting evidence towards \eqref{eq:sk-Langlands} have been dimension counts without an isomorphism being constructed - it still makes sense to ask the following motivating question:
\begin{question} \label{question:sk-arithm-bnd-cond}
    What is the analogue of equation \eqref{eq:arithm-bnd-cond} in skein theory?
\end{question}
The present work represents a tiny step towards answering, or formulating, the preceding question. In particular, one must understand how the Hamiltonian actions labelling boundary conditions in relative Langlands duality may be deformed to an object intrinsic to skein theory. 

Casting aside the question of Langlands duality for now, in some sense the response to Question \ref{question:sk-arithm-bnd-cond} is immediate: skein theory, being a fully topological QFT, has well-defined notions of boundary theories (or at least, it has well-established expectations for boundary theories). We will work one dimension lower topologically than in the arithmetic setting.\footnote{Indeed, to metaphorically make sense of \eqref{eq:arithm-bnd-cond} one considers 4-dimensional cylinders over the 3-manifold ``$\spec F$", and skein theory is not currently equipped to evaluate on such cobordisms.} As a first approximation, one searches for the following structures: given a Hamiltonian $G$-space $X$, we should assign a categorical distribution
\begin{equation} \label{eq:cat-distribution}
    \sk_G(X) : \sk_G(\Sigma) \longrightarrow \mathrm{Vect}
\end{equation}
for every $M$ an oriented 3-manifold with boundary surface $\Sigma$, as the analogue of \eqref{eq:arithm-bnd-cond}. 

The main constructions of the present project (see \S 3.1 and in particular Definition \ref{def:TQFT-line-def} and Definition \ref{def:skmod}) can be understood as such a categorical distribution when $\Sigma$ is a genus 1 surface, in which case what we termed \textit{skein module with defect} coincides with the value of $\sk_G(X)$ on the empty object of $\sk_G(\Sigma)$
$$\sk_{G,X}(M,\Sigma) := \sk_G(X)(\varnothing).$$
When $\Sigma$ is the boundary torus of a normal neighborhood $\nu(K)$ of an embedded knot $K$, we regard $\sk_{G,X}(M,\nu(K))$ as a vector space-valued knot invariant for $K \subset M$. 

\subsubsection{Deformation, quantization, and shifted symplectic geometry}
\label{subsec:intro-defquant-shiftedsympl}

Unfortunately but interestingly, the proposal sketched by \eqref{eq:cat-distribution} -- that is, the assignment of a categorical distribution to a Hamiltonian $G$-space -- cannot be literally correct without further engineering. Indeed, underlying the skein theory TQFT is the $E_2$-algebra, or braided monoidal category $\mathrm{Rep}_q(G)$. As such, we understand that a boundary theory with which one can label embedded knots inside closed 3-manifolds should arise from $E_2$-modules, or braided modules over $\mathrm{Rep}_q(G)$, which \textit{a priori} appear unrelated to the Hamiltonian actions considered by the relative Langlands program.

After some reflection, the link between these two notions of boundary conditions can be constructed by combining some lessons from the work of Ben-Zvi--Brochier--Jordan and Safronov, the most relevant to us being the following:
\begin{itemize}
    \item \cite[Theorem 1.1]{BZBJ18b}. The structure of a braided module over $\mathrm{Rep}_q(G)$ can be repackaged as a certain $q$-deformation of a $G$-valued \textit{quasi-Hamiltonian moment map} in the sense of \cite{AMM97}.
    \item \cite[\S 2.2 and \S 2.3]{Safronov1}. A $\mathfrak{g}^*$-valued Hamiltonian moment map (resp. a $G$-valued quasi-Hamiltonian moment map) is equivalent to a 1-shifted Lagrangian in $\mathfrak{g}^*/G$ (resp. in $G/G$). 
\end{itemize}
The path from the Hamiltonian actions considered by Ben-Zvi--Sakellaridis--Venkatesh to boundary conditions for skein theory can thus be schematically represented as follows:
\begin{equation}\label{eq:Hamiltonian-qsHamiltonian-brmod}
    \text{Hamiltonian actions} \overset{\text{deformation}}{\longsquiggly} \text{quasi-Hamiltonian actions} \overset{\text{quantization}}{\longsquiggly} \text{Braided modules}
\end{equation}
At our current level of understanding, both the existence and the uniqueness of the ``deformation" and ``quantization" arrows are unclear. It is nonetheless evident that, given a Hamiltonian $G$-action $X$ with moment map $\mu$, the squiggly arrows represent the following precise pieces of extra data:
\begin{itemize}
    \item (Quasi-Hamiltonian deformation). Let $\mathbf{G}$ be the deformation to the normal cone along $\mathrm{B}G \to G/G$; then $\mathbf{G}$ is a 1-shifted symplectic stack over $\mathbf{A}^1$ with special fiber $\g^*/G$ and general fiber $G/G$. A \textit{quasi-Hamiltonian deformation} of the Hamiltonian moment map $\mu: X/G \to \g/G$ means the data of a $G$-space $\mathbf{X}$ over $\mathbf{A}^1$ equipped with a 1-shifted Lagrangian morphism $\boldsymbol{\mu}:\mathbf{X}/G \to \mathbf{G}$ over $\mathbf{A}^1$ such that the base change to $0 \in \mathbf{A}^1$ is identified with $\mu$. 
    \item (Shifted geometric quantization). Suppose we are given a quasi-Hamiltonian moment map $\mu_1: X_1/G \to G/G$, which we may think of as arising from taking the fiber at $1 \in \mathbf{A}^1$ from the preceding construction. Then a \textit{shifted geometric quantization} is the data of a quantum moment map deforming $\mu_1$ in an appropriate sense, and providing the structure of a braided module over $\mathrm{Rep}_q(G)$.
\end{itemize}
While it is our intention to study more systematically in the future these two deformation problems, the first of geometric and the second of categorical nature, in the present project we content ourselves with a class of examples where the quasi-Hamiltonian deformation and shifted geometric quantization should be well-known to experts: we consider Hamiltonian $G$-actions of the form
$$G \acts X = T^*(G^\theta \backslash G) \text{ where } \theta \text{ is an involution on } G.$$
In other words, $G^\theta \subset G$ is a symmetric subgroup. In this case, 
\begin{itemize}
    \item A quasi-Hamiltonian deformation\footnote{This example will appear explicitly in forthcoming work of the first author with Š. Kaubrys \cite{EricSarunas}.} of $X$ is given (above $1 \in \mathbf{A}^1$) by the multiplicative analogue of $X$, 
    $$G \acts X_1 := G^\theta \backslash G \times^{G^\theta} G.$$
    \item The shifted geometric quantization of $X_1$ is constructed via the theory of QSPs \cite{BK, Kol14, Kol20}, directly as a braided module over $\mathrm{Rep}_q(G)$.
\end{itemize}
Before approaching the question of \textit{relative Langlands duality} in this context, one must first construct and understand many examples in which the deformation problems posed above can be solved. 

\subsection{AI usage disclosure}

This research was conducted, and this paper was drafted, without the use of AI tools, beyond basic literature search.  We conducted one AI generated referee report for typos and minor errors which we corrected by hand.

\subsection{Acknowledgements}
E.Y.C. would like to recognize the support of the Swiss National Science Foundation No. 196960 and the JSPS Postdoctoral Fellowship during the completion of this project. D.J.\ and I.R.\ were supported the Simons Collaboration on Global Categorical Symmetries 1013836, and by EPSRC Open Fellowship “Complex Quantum Topology”, grant number EP/Y008812/1. I.R\ was also supported by the ERC under the EU’s Horizon 2020 programme grant agreement No 948885.

\subsubsection*{Conventions:}
\begin{itemize}
    \item Let $R$ will denote a fixed commutative ring. We will consider various (balanced) braided tensor categories $\A$ linear over $R$.
    \item Let $G$ be a reductive group. In the case when $\A = \mathrm{Rep}_q(G)$, \ie, Type I representations of the quantum group of $G$, we take $R = \C[q^{\pm 1/d}]$. Here $d \in \N$ denotes the smallest positive integer such that $(\mu,\nu)\in \frac{1}{d}\Z$ for all weights $\mu,\nu$ (e.g.\ $d=n+1$ in type $\mathrm{A}_n$). 
    \item We say that $q$ is \textit{generic} to indicate we are tensor over $k = \mathrm{Frac}(R) = \C(q^{1/d})$. Whenever we discuss \textit{specializable parameters} we will work over the localized ring $R_{(q-1)}$, so that certain deformation elements can be specialized as $q \to 1$.
\end{itemize}

\section{Prerequisites}
\label{sec:pre}

In this section, we recall some categorical background and the notion of braided module categories. We then  recall how quantum symmetric pairs give rise to examples of braided module categories. 

\subsection{Categorical background}\label{subsec:cat-background}

In this section, we briefly recall the relevant categorical notions involved and fix conventions. For a more extensive exposition see \cite{Cooke}, \cite{BrownHaioun} and \cite{GJS}.  

Let $\Cat$ denote the 2-category of (small) $R$-linear categories, with $R$-linear functors and natural transformations as morphism categories
$$\Hom_{\Cat}(\A,\mathcal{B}) := \Fun_{R}(\A, \mathcal{B}).$$ The $R$-linear tensor product $\boxtimes_R$ of $R$-linear categories endows $\Cat_R$ with a symmetric monoidal structure, with the category of $R$-modules $\Mod_R$ as the monoidal unit. 

\begin{definition}\label{def:bimod}
    The bicategory $\Bimod$ of $R$-linear categories and bimodules consists of small $R$-linear categories as objects and morphisms categories $$\Hom_{\Bimod}(\A,\mathcal{B}):= \Fun_{R}(\mathcal{B}\boxtimes\mathcal{A}^\opp, \Mod_R)~.$$
\end{definition}
A bimodule $F: \mathcal{B}\boxtimes \mathcal{A}^\opp \to \Mod_R$ is alternatively called a \textit{profunctor} and denoted by $F: \mathcal{A}\proarrow \mathcal{B}$. Composition of $F: \mathcal{B} \boxtimes \mathcal{A}^\opp \rightarrow \Mod_R$ with $G: \mathcal{C}\boxtimes \mathcal{B}^\opp \rightarrow \Mod_R$ is given by the coend \cite[Ch.\ 7.8]{Bor94}: 
$$G\circ F:= \int^{b\in \mathcal{B}}{G(-,b)\otimes F(b,-)}: \mathcal{C}\boxtimes \mathcal{A}^\opp \rightarrow \Mod_R~.$$
The $R$-linear tensor product $\boxtimes$ equips $\Bimod$ with a symmetric monoidal structure \cite[Sec.\ 7]{DS97}. There is a symmetric monoidal faithful functor $\Cat \to \Bimod$ which is the identity on objects and maps a functor $F: \mathcal{A}\to \mathcal{B}$ to the bimodule $\Hom_{\mathcal{B}}(-,F(-)):\mathcal{B}\boxtimes \mathcal{A}^\opp \to \Mod_R$. 

Finally, we define the bicategory of locally presentable $R$-linear categories; we refer the reader to \cite{BCJF} for details. 
\begin{definition}\label{def:prl}
    The bicategory $\prl$ consists of locally presentable $R$-linear categories as objects and cocontinuous functors and natural transformations as morphism categories $$\Hom_{\prl}(\mathcal{A}, \mathcal{B}):= \Fun_{cc}(\mathcal{A},\mathcal{B})~.$$
\end{definition}
The ordinary $R$-linear tensor product of two categories in $\prl$ is no longer locally presentable and thus we instead use the Deligne--Kelly product $\boxtimes$ (retaining the same notation as it will be clear from context) which provides $\prl$ with a symmetric monoidal structure. The \textit{free cocompletion} $\Ahat:= \Fun_R(\mathcal{A}^\opp,\Mod_R)= \Hom_{\Bimod}(\mathcal{A}, \Mod_R)$ extends to a symmetric monoidal fully faithful functor 
\begin{equation}\label{eq:free-cococompletion}
\ind{(-)}: \Bimod \to \prl
\end{equation}
which identifies $\Bimod$ with the full subcategory in $\prl$ spanned by categories with enough compact projectives. Under this identification, if $\mathcal{C}\in \prl$ has enough compact projectives, then $\mathcal{C}\simeq \widehat{(\mathcal{C}^\mathrm{cp})}$ where $\mathcal{C}^\mathrm{cp}\subset \mathcal{C}$ is the subcategory of compact projective objects, or cp-objects for short.

Since we have a symmetric monoidal functor $\Cat\to \Bimod\to \prl$, a tensor structure (equivalently $E_1$-algebra structure) on a $R$-linear category $\mathcal{A}\in \Cat$ induces a tensor structure on $\Ahat$ given by \textit{Day convolution} \cite{Day}: for $X,Y\in \ind{\mathcal{A}}$ we have 
\begin{equation}\label{eq:Day-convolution}
X\otimes Y:= \int^{z,w\in \mathcal{A}}{\Hom_{\mathcal{A}}(-,z\otimes w)\otimes X(z)\otimes Y(w)}~.    
\end{equation}
The tensor unit is given by $\unit\in \A\subset \Ahat$ where the latter inclusion is the \textit{Yoneda embedding}. If $\A$ is additionally braided (equivalently a framed $E_2$-algebra) resp. ribbon (equivalently an oriented $E_2$-algebra), then so is $\Ahat$. 
\begin{rem}\label{rem:E2-mod-coco}
The free cocompletion is also compatible with $E_1$- and, more importantly for us, $E_2$-module structures over $E_2$-algebras, which we will see later in the discussion of braided module categories. 
\end{rem}
An algebra in $\Ahat$ is the same as a lax monoidal functor $X: \A^\opp \to \Mod_R$, \ie there are natural transformations $$\varphi_{z,w}:X(z)\otimes X(w) \to X(z\otimes w)$$
and $$\eta:R \to X(\unit)$$ compatible with associators and unitators of $\A$. A (left) module $Y\in \Ahat$ over such an algebra $X\in \Ahat$ is equipped with a natural transformation $$X(z) \otimes Y(w) \to Y(z\otimes w)$$ compatible with the lax monoidal structure of $X$. 

\subsection{Braided module categories}
\label{subsec:brmod} 
Let $\A$ be a tensor category, \ie an $E_1$-algebra in $\Cat$. 

\begin{definition}\label{def:mod-cat}
An $\A$-module category is a small $R$-linear category $\M$ together with an action functor $\lact: \A \times \M\to \M$, which is $R$-bilinear, as well as natural transformations $a_{x,y,m}:(x\otimes y)\lact m \cong x\lact(y\lact m)$ and $u_{m}:\unit \lact m \cong m$ compatible with the associators and unitators of $\A$. 
\end{definition}
The preceding definition coincides with the notion of an $E_1$-module in $\Cat$ of the $E_1$-algebra $\A$. 

Let $m\in \M$ be an object and let
\[\mathrm{act}_m:= (-)\lact m: \A\to \M\] 
denote the associated action functor. Passing to the free cocompletions of $\A$ and $\M$ we obtain a right adjoint\footnote{All 1-morphisms in $\prl$ admit right adjoint functors, albeit not necessarily cocontinuous.} to $\mathrm{act}_m$ \[\mathrm{act}_m^R:= \ihom(m,-): \ind{\M}\to \ind{\A}~.\] 
If $\mathrm{act}_m^R$ is conservative (resp.\ cocontinuous), then we call $m$ an $\A$-\textit{generator}\footnote{Note that if $\mathrm{act}_{m}^R$ is conservative then $\mathrm{act}_m$ is \textit{dominant}, \ie every object in $\M$ is a quotient (equivalently subobject if $\M$ is semisimple) of $x\lact m$ for some object $x\in \A$.} (resp.\ $\A$-\textit{projective}). If both, then we call $m$ an $\A$-\textit{progenerator}. We recall the reconstruction theorem for module categories: 

\begin{prop}[Theorem\ 4.6 of \cite{BZBJ18a}]
\label{prop:module-cat-reconstruction}
    Let $\unit_\M\in \M$ be an $\A$-progenerator. Then, we have an equivalence of $\A$-module categories
    \[\ind{\M}\simeq \Mod\mbox{-}\iend(1_\M)~\]
    where $\iend(\unit_\M)= \mathrm{act}_{\unit_\M}^R\mathrm{act}_{\unit_\M}(\unit_\A)\in \ind{\A}$ is the internal endomorphism ring associated to the adjunction.
\end{prop}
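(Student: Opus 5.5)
Write $m:=\unit_\M$, $T:=\mathrm{act}_m^R\,\mathrm{act}_m$ for the monad on $\Ahat$ and $A:=T(\unit_\A)=\iend(m)$. The plan is a Barr--Beck style monadicity argument carried out $\Ahat$-linearly, followed by identifying the monad with right multiplication by the algebra $A$.

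First I will extend the action to the free cocompletions, $\Ahat\boxtimes\ind{\M}\to\ind{\M}$ (Remark~\ref{rem:E2-mod-coco}). Then $\mathrm{act}_m=(-)\lact m:\Ahat\to\ind{\M}$ is a cocontinuous $\Ahat$-module functor, and its right adjoint $\mathrm{act}_m^R=\ihom(m,-)$ exists. I will check that $\mathrm{act}_m^R$ is lax $\Ahat$-linear via the mate of the module structure of $\mathrm{act}_m$: $x\otimes\ihom(m,n)\to\ihom(m,x\lact n)$. When $\mathrm{act}_m^R$ is cocontinuous this map is an isomorphism. To see this it suffices, by cocontinuity in $x$, to treat $x\in\A$; $\A$ is rigid, being ribbon, and for rigid $x$ the map is invertible because $x\lact -$ has adjoint $x^\vee\lact-$. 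Hence $T$ is an $\Ahat$-module endofunctor, and so $T(x)\cong x\otimes T(\unit_\A)=x\otimes A$. The monad multiplication and unit make $A$ an algebra in $\Ahat$ and identify $T$-algebras with right $A$-modules $\Mod\mbox{-}A$ in $\Ahat$. I will keep left/right conventions consistent with the statement.

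Next I apply the Barr--Beck monadicity theorem to the adjunction $\mathrm{act}_m\dashv\mathrm{act}_m^R$. The hypotheses are:
\begin{itemize}
\item[(i)] $\mathrm{act}_m^R$ is conservative, which is the generator assumption;
\item[(ii)] $\ind{\M}$ has, and $\mathrm{act}_m^R$ preserves, coequalizers of $\mathrm{act}_m^R$-split pairs. Since $\ind{\M}$ is cocomplete and $\mathrm{act}_m^R$ is cocontinuous by projectivity, it preserves all colimits.
\end{itemize}
Thus the comparison functor $\ind{\M}\to T\text{-alg}\simeq\Mod\mbox{-}A$, sending $n\mapsto\ihom(m,n)$, is an equivalence. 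Finally, this comparison is $\Ahat$-linear, by the same projection-formula isomorphism, so the result is an equivalence of $\Ahat$-module categories.

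The main obstacle is the projection formula. One has to prove that the lax linearity map of $\ihom(m,-)$ is invertible and check its coherence with associators, so that $T\cong(-)\otimes A$ as monads and not merely as functors. The plan is to reduce it to the rigid generators $x\in\A$ using cocontinuity of both sides, which is where the $\A$-projectivity of $m$ is essential.
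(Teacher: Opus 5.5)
Your proposal is correct and follows essentially the standard proof of the cited result; the paper gives no proof of its own and defers to Ben-Zvi--Brochier--Jordan. That proof is Barr--Beck monadicity for $\mathrm{act}_{\unit_\M}\dashv\mathrm{act}_{\unit_\M}^R$ (conservativity plus cocontinuity of the right adjoint), together with the fact that this cocontinuous right adjoint is a strong $\Ahat$-module functor, which identifies the monad with $(-)\otimes\iend(\unit_\M)$. One caveat: the rigidity you invoke is not in the statement as phrased in Section~2.2, where $\A$ is only a tensor category; it comes from the paper's standing cp-rigidity assumption, and it is genuinely needed, since without it the projection formula and the conclusion can fail.
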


Let $\A$ denote a braided tensor category with braiding $c$, aka a (framed) $E_2$-algebra in $\Cat$ (resp.\ $\prl$). For the purpose of this paper, it suffices to work in $\Cat$ as the categories we are interested in have enough compact projectives and are even cp-rigid, \ie all cp-objects are rigid/dualizable. 

\begin{definition}\label{def:br-mod-cat}
A \textit{braided module category} over $\A$ is a left $\A$-module category $\M$ equipped with a natural automorphism $e$ of the action functor $\lact: \A\times \M\rightarrow \M$ such that for all $x,y \in \A$ and $m\in \M$ we have the following compatibility conditions:
    \begin{align}
        e_{x, y\lact m} &= (c_{y,x}\lact \id_{m})\circ (\id_{y}\lact e_{x,m})\circ (c_{x,y}\lact\id_{m})\\   
        e_{x\otimes y, m} &= (c_{x,y}^{-1}\lact \id_{m})\circ(\id_{y}\lact e_{x,m})\circ (c_{x,y}\lact \id_{m})\circ (\id_{x}\lact e_{y,m})\\ 
        & = (\id_{x}\lact e_{y,m})\circ (c_{x,y}^{-1}\lact \id_{m})\circ(\id_{y}\lact e_{x,m})\circ (c_{x,y}\lact \id_{m})~.\notag
    \end{align}
    where the associators of $\A$ and $\M$ are suppressed for legibility.
\end{definition}
The braided module structure $e_{x,m}$ can be understood graphically as a braiding of an $x$-labelled strand with a special (colored red) $m$-labeled strand as follows: 
\begin{equation}\label{eq:braided-structure-graphically}
    e_{x,m} = \begin{tikzpicture}[baseline={([yshift=-.5ex]current bounding box.center)}]
        \brmod{0,0}{.5}{1.2}
        \node[anchor=north] at (0,0) {\scriptsize$x$};
        \node[anchor=north] at (.5,0) {\scriptsize$m$};
        \node[anchor=south] at (0,1.2) {\scriptsize$x$};
        \node[anchor=south] at (.5,1.2) {\scriptsize$m$};
    \end{tikzpicture}
\end{equation}

\begin{rem}\label{rem:brmod-convention}
Our definition of braided module category coincides with the convention of \cite{BZBJ18b} by adapting from right module to left module structures. In particular, axiom (2) differs slightly from the convention used in \cite{Bro13} and \cite{Kol20}.
\end{rem}
\begin{rem}
\label{rem:brmod-Morita}
A braided module category is the same as an $E_2$-module over an $E_2$-algebra in $\Cat$ or equivalently a $2$-endomorphism in $\Aut_{\operatorname{BrTens}}({}_{\A}\A_\A)$ of the identity 1-morphism ${}_{\A}\A_\A: \A \rightarrow \A$ in $\operatorname{BrTens}$. Here, $\operatorname{BrTens}$ denotes the Morita category of braided tensor categories, $E_1$-bimodules, $E_2$-bimodules etc.\ (see \cite{BJS}). 
    
The identification of braided $\A$-module categories and $E_2$-module categories is found in \cite[Theorem\ 3.11]{BZBJ18b}. Phrased in terms of \textit{factorization homology} (see Section~\ref{subsec:fact-hom}) this is the same as an $\int_{\Ann}\A$-module structure, \ie an $E_1$-module over the $E_1$-algebra $\int_{\Ann}{\A}$.   
\end{rem}

If, in addition, $\A$ is balanced\footnote{We reserve the term \textit{ribbon} for balanced braided tensor categories which are cp-rigid and the balancing is compatible with rigidity: $\theta_{x^\ast} = \theta_x^\ast$ for any cp-object $x$.} with twist $\theta$, then a braided module category $\M$ is called \textit{balanced} if it is also equipped with a natural automorphism $\phi: \Id_{\M}\simeq \Id_\M$, called \textit{balancing}, such that $\phi_{x \lact m} = e_{x,m} \circ (\theta_x \lact \phi_m).$ In view of \cite[Theorem\ 3.12]{BZBJ18b} there is a canonical balancing of $\M$ given a balancing of $\A$. 

\begin{rem}\label{rem:brmod-E2}
    A \textit{balanced} braided tensor category $\A$ is the same as an \textit{oriented} $E_2$-algebra in $\Cat$ and a \textit{balanced} braided module category is the same as an \textit{oriented} $E_2$-module over the latter. Hence, rephrasing the above, an $E_2^\mathrm{fr}$-module over the underlying $E_2^\mathrm{fr}$-algebra of an $E_2^\mathrm{or}$-algebra extends canonically to an $E_2^\mathrm{or}$-module structure.
\end{rem}

In the presence of the additional structure of (balanced) braided module categories, the reconstruction result of Proposition~\ref{prop:module-cat-reconstruction} enhances to the following: 
\begin{prop}[Theorem\ 4.3 of \cite{BZBJ18b}]\label{prop:braided-module-cat-reconstruction}
    Let $\M$ be a (balanced) braided $\A$-module category over the (balanced) braided tensor category $\A$ and $\unit_\M\in \M$ be an $\A$-progenerator. Then, we have an equivalence of (balanced) braided $\A$-module categories
    \[\widehat{\M}\simeq \Mod_{\A}\mbox{-}\iend(\unit_\M)~.\]
\end{prop}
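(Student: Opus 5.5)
Starting from Proposition~\ref{prop:module-cat-reconstruction}, we already have an equivalence of $\A$-module categories $\widehat{\M}\simeq \Mod\mbox{-}\iend(\unit_\M)$, sending $n\in\widehat{\M}$ to $\ihom(\unit_\M,n)=\mathrm{act}^R_{\unit_\M}(n)$ with its right $\iend(\unit_\M)$-action. It remains to transport the braided module structure $e$, and then the balancing, across this equivalence. By Remark~\ref{rem:brmod-Morita}, a braided module structure is an $\int_{\Ann}\A$-module structure, and $\Mod_\A\mbox{-}A$ for an algebra $A\in\Ahat$ carries a braided module structure exactly when $A$ is a \emph{quantum moment map} algebra: there is an algebra map $\mu:\mathcal{O}_{\A}\to A$ from the reflection-equation algebra $\mathcal{O}_\A=\int^{x}x^\ast\otimes x$ (which represents $\int_{\Ann}\A$ acting on the unit). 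The map $\mu$ must satisfy the field-goal compatibility with multiplication. So I would proceed in three steps.

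First, construct $\mu$ from $e$. For each $x\in\A$ there is the composite
\[
x\lact\unit_\M\xrightarrow{e_{x,\unit_\M}}x\lact\unit_\M ,
\]
which by adjunction gives $x\to x\otimes\iend(\unit_\M)$, equivalently $x^\ast\otimes x\to\iend(\unit_\M)$. Naturality of $e$ in $x$ makes these dinatural, so they assemble into $\mu:\mathcal{O}_\A\to\iend(\unit_\M)$. The first axiom of Definition~\ref{def:br-mod-cat} (for $e_{x,y\lact m}$) should give the field-goal/commutation relation between $\mu$ and the multiplication of $\iend(\unit_\M)$. The second axiom (for $e_{x\otimes y,m}$) should give multiplicativity of $\mu$ with respect to the braided product on $\mathcal{O}_\A$. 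Both checks are string-diagram manipulations.

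Second, conversely, define the braiding on $\Mod_\A\mbox{-}\iend(\unit_\M)$ by the standard formula: for a module $N$, set $e_{x,N}$ to be the composite that coacts via $x\to x\otimes\mathcal{O}_\A$, applies $\mu$, and acts on $N$ (equivalently, the braiding induced by $\int_{\Ann}\A$ acting through $\mu$). I would then show the reconstruction functor intertwines the two braidings. Both are natural in $x$ and in the module, and every object of $\widehat{\M}$ is a colimit of objects $y\lact\unit_\M$, since $\unit_\M$ is a progenerator and $\mathrm{act}^R$ is cocontinuous and conservative. It therefore suffices to compare them on free modules $y\otimes\iend(\unit_\M)$. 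There, axiom (1) reduces $e_{x,y\lact\unit_\M}$ to $e_{x,\unit_\M}$ conjugated by $c$, which is precisely the moment-map formula. This comparison is the step I expect to be the main obstacle: I would do it diagrammatically, using the $E_2$-module/$\int_{\Ann}\A$-module description from \cite{BZBJ18b} to avoid checking coherences by hand.

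Third, balancing. By \cite[Theorem~3.12]{BZBJ18b}, balancings are canonically determined by the balancing of $\A$ together with the braided module structure. Hence the equivalence of braided modules automatically upgrades to an equivalence of balanced braided modules; alternatively, $\phi_{x\lact\unit_\M}=e\circ(\theta\lact\phi)$ pins down $\phi$ on generators.
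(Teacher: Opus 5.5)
The paper gives no proof of this statement. It quotes the result from \cite{BZBJ18b}, after recalling in Remark~\ref{rem:brmod-Morita} that braided module structures are the same as $E_2$-module, i.e.\ $\int_{\Ann}\A$-module, structures. The only further content the paper supplies is the explicit form of $\mu$ in \eqref{eq:qmm-explicit}, and that is exactly your first step: by adjunction, the component of $\mu$ on $x^\ast\otimes x$ is the mate of $e_{x,\unit_\M}$ (the action of $\mathcal{J}$ in the QSP case).

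Your proposal is therefore a direct proof of the cited result, not a variant of an argument in the paper, and it is sound in outline. Its pieces are:
\begin{enumerate}
\item multiplicativity of $\mu$ from axiom (2) at $m=\unit_\M$;
\item the field-goal relation from axiom (1) combined with naturality of $e$ in the module variable, applied to maps $\unit_\M\to a\lact\unit_\M$;
\item comparison of the two braidings on free modules $y\otimes\iend(\unit_\M)$;
\item extension to all modules by density.
\end{enumerate}
Compared with working inside the $\int_{\Ann}\A$-module formalism, you pay with diagrammatic checks but obtain the explicit formula the paper actually uses in Section~\ref{subsec:qmm-QSP}. Once the two braidings agree on free modules, naturality in $N$ and cocontinuity of $x\lact(-)$ force agreement everywhere. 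So you need not check separately that the $\mu$-defined braiding is $\iend(\unit_\M)$-linear or satisfies Definition~\ref{def:br-mod-cat}; it inherits both from $e$.

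Two points need tightening.

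First, the formula in your second step cannot be ``apply $\mu$, then act.'' To act on a right module $N$ you must move the $\Ocal_\A$-factor past $N$ \emph{before} applying $\mu$, using the field-goal transform: the $x^\ast$-leg crosses $N$ one way and the $x$-leg the other. This is forced because the $x$-strand in $c_{y,x}\circ c_{x,y}$ from axiom (1) passes over $y$ once and under it once. Applying $\mu$ first and then braiding past $N$ already fails for the regular module $\M=\A$ with $\unit_\M=\unit$. There $\mu$ is the evaluation $\Ocal_\A\to\unit$, and your composite gives $\id_{x\otimes y}$ rather than $e_{x,y}=c_{y,x}\circ c_{x,y}$. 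With the field-goal transform in place, the free-module comparison is exactly axiom (1) past $y$, plus the field-goal relation past $\iend(\unit_\M)$. The latter converts the \emph{left} multiplication that $e_{x,\unit_\M}$ becomes under the equivalence into a right action.

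Second, balancings are not unique. They form a torsor under $\A$-module automorphisms of $\id_\M$; for $\A=\M=\Vect$, every nonzero scalar is a balancing. Hence $\phi_{x\lact\unit_\M}=e_{x,\unit_\M}\circ(\theta_x\lact\phi_{\unit_\M})$ determines $\phi$ only once $\phi_{\unit_\M}$ is fixed. You can either transport $\phi_{\unit_\M}$ along the equivalence, or read \cite[Theorem~3.12]{BZBJ18b} as supplying a balancing built functorially from $(\theta,e)$, which every braided module equivalence then preserves.
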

The braided structure on the latter is induced by a canonical algebra homomorphism 
\begin{equation*}
    \mu:\Ocal_\A\to\iend(\unit_\M)
\end{equation*}
out of the \textit{reflection equation algebra} $\Ocal_\A\in \A$ called the \textit{quantum moment map}. We will recall more details in Sections \ref{subsec:ISA} and \ref{subsec:qmm-QSP}.

\subsection{Quantum symmetric pairs}
\label{subsec:QSP}

Our main source of examples of braided module categories arises from symmetric pairs $(G,G^\theta)$, which consist of a reductive group $G$ and a group involution $\theta \in \mathrm{Aut}(G)$ for which $G^\theta$ is the fixed subgroup. For now, we shall assume that the involution $\theta$ is \textit{inner}, that is $\theta$ is obtained by the adjoint action of some element $J \in G$ (which necessarily satisfies the condition that $J^2$ is central in $G$).

The quantizations of symmetric pairs are referred to as \textit{quantum symmetric pairs} in the sense of Letzter \cite{Let99}, and the resulting structure may be regarded as a deformation of the $\mathrm{Rep}(G)$-module category 
\begin{equation} \label{eq:deformation-of-module-cat}
    \mathrm{Rep}(G) \acts \mathrm{Rep}(G^\theta) \rightsquigarrow \mathrm{Rep}_q(G) \acts \mathrm{Rep}_{\mathbf{c},\mathbf{s}}(G^\theta)
\end{equation}
where $\mathbf{c} = (c_i),\mathbf{s} =(s_j)$ are certain tuples of scalars in $R$ that represent additional deformation parameters. In \cite{Kol20}, the module category $\Rep_{\mathbf{c}, \mathbf{s}}(G^\theta)$ is further endowed with the structure of a braided module over $\Rep_q(G)$ (or a $\Z_2$-equivariantization thereof if the involution is outer). 

To make precise sense of \eqref{eq:deformation-of-module-cat}, we will only consider parameters $\mathbf{c}, \mathbf{s}$ that are \textit{specializable}, \ie that they lie in the ring $R_{(q-1)}$ and $c_i(1) = 1$, in which case we have the following theorem of Kolb. 

\begin{thm}[Theorem 10.8 of \cite{Kol14}] \label{thm:Kolb-specializability}
    Let $\mathbf{c}, \mathbf{s}$ be specializable parameters. Then $\mathrm{Rep}_{\mathbf{c}, \mathbf{s};q=1}(G^\theta) \simeq \mathrm{Rep}(G^\theta)$ as module categories over $\mathrm{Rep}_{q=1}(G) \simeq \mathrm{Rep}(G)$.
\end{thm}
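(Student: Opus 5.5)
The plan is to reduce the categorical statement to a specialization statement about the coideal subalgebra itself, and then to use the standard integral-form/PBW approach for quantized enveloping algebras. Let $\mathcal{U}_{\mathbf{c},\mathbf{s}}\g^\theta\subset \mathcal{U}_q\g$ be the coideal subalgebra generated by the Cartan part fixed by $\theta$ and the elements
\[
B_i = F_i + c_i\,\theta_q(F_i)K_i^{-1} + s_iK_i^{-1}.
\]
Consider the $R_{(q-1)}$-subalgebra $\mathcal{U}'_{\mathbf{c},\mathbf{s}} := \mathcal{U}_{\mathbf{c},\mathbf{s}}\g^\theta\cap \mathcal{U}'_{R_{(q-1)}}$, the intersection with the De Concini--Kac/Lusztig-type integral form. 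The first step will be to show that the specialization $\mathcal{U}'_{\mathbf{c},\mathbf{s}}\otimes_{R_{(q-1)}}\C$, with $q\mapsto 1$, is identified with $\mathcal{U}(\g^\theta)\subset \mathcal{U}(\g)$. Specializability ($c_i(1)=1$, $s_i\in R_{(q-1)}$) gives $B_i\mapsto F_i+\theta(F_i)$ plus a term $s_i(1)$, which gives generators of $\g^\theta$ up to the appropriate twist. This yields a surjection from the specialization onto $\mathcal{U}(\g^\theta)$. Injectivity will follow from a quantum Iwasawa/PBW decomposition of $\mathcal{U}_{\mathbf{c},\mathbf{s}}\g^\theta$: the ordered monomials in the $B_i$ (via a filtration whose associated graded is $\mathcal{U}^-$ tensored with the Cartan part) are free over $R_{(q-1)}$ and specialize to a PBW basis of $\mathcal{U}(\g^\theta)$.

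Second, pass to categories. Objects of $\mathrm{Rep}_{\mathbf{c},\mathbf{s}}(G^\theta)$ are finite-dimensional integrable modules over the coideal subalgebra arising as summands or restrictions of Type I $\mathcal{U}_q\g$-modules. Since $\mathrm{Rep}_q(G)$ has $R_{(q-1)}$-lattices (Weyl modules) specializing to $\mathrm{Rep}(G)$, restricting these lattices to $\mathcal{U}'_{\mathbf{c},\mathbf{s}}$ and specializing gives the restriction functor $\mathrm{Rep}(G)\to\mathrm{Rep}(G^\theta)$. The module category $\mathrm{Rep}_{\mathbf{c},\mathbf{s};q=1}(G^\theta)$ is generated under the action by $\unit$ as an $\A$-progenerator. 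So by Proposition~\ref{prop:module-cat-reconstruction} it suffices to identify the internal endomorphism algebras $\iend(\unit)$ on both sides. At $q$ generic, $\iend(\unit)$ is the algebra of $\mathcal{U}_{\mathbf{c},\mathbf{s}}\g^\theta$-invariant matrix coefficients, namely the quantized coordinate ring $\Ocal_q(G^\theta\backslash G)$. At $q=1$ it is $\Ocal(G^\theta\backslash G)$. Step one, together with flatness of the invariants, shows that the first specializes to the second as an algebra in $\mathrm{Rep}(G)$. Module structure and associators are compatible because the coproduct of $\mathcal{U}_q\g$ specializes and the coideal property specializes to $\Delta(\mathcal{U}\g^\theta)\subset \mathcal{U}\g^\theta\otimes \mathcal{U}\g$.

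The main obstacle is the flatness and multiplicity statement. For each irreducible $V_\lambda$, the $R_{(q-1)}$-module of $\mathcal{U}'_{\mathbf{c},\mathbf{s}}$-invariants in the lattice must be free of rank equal to $\dim (V_\lambda^*)^{G^\theta}$, with no jumps at $q=1$. I would first establish the inequality rank $\le$ classical dimension by semicontinuity, using injectivity from the PBW step. For the reverse inequality I would use the Letzter/Kolb construction of spherical vectors, obtained by lifting classical invariants with the explicit quantum Iwasawa decomposition $\mathcal{U}_q\g = \mathcal{U}_{\mathbf{c},\mathbf{s}}\g^\theta\cdot \mathcal{U}^{0}\mathcal{U}^{+}$-type. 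I expect this to require a careful case analysis of the Satake diagram when $s_i\neq 0$.
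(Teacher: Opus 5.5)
Your Step~1 is essentially what the paper relies on. The paper gives no argument of its own: it quotes Kolb's Theorem~10.8, which is exactly the algebra-level specialization $\C\otimes_R\mathcal{U}_{\mathbf{c},\mathbf{s}}\g^\theta\cong\mathcal{U}\g^\theta$ inside $\mathcal{U}\g$ (this is how it is glossed in Section~\ref{subsec:QSP}). The categorical statement is then immediate. $\mathrm{Rep}_{\mathbf{c},\mathbf{s};q=1}(G^\theta)$ is the category of modules over the specialized coideal subalgebra. Since $\Delta$ specializes to the cocommutative coproduct, $\mathrm{Rep}(G)$ acts by restriction along $\mathcal{U}\g^\theta\subset\mathcal{U}\g$ followed by tensoring.

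Step~1 needs two repairs.

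First, the generators must include $\mathcal{M}_X=\mathcal{U}_q(\g_X)$ for the black nodes $X$ of the Satake diagram, with $B_i$ only for $i\notin X$. Otherwise, for example in Type AIII with $a<b$, you never recover $\g_X$ at $q=1$.

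Second, the ``surjection/injectivity'' framing is off. Write $B=\mathcal{U}_{\mathbf{c},\mathbf{s}}\g^\theta$, $\mathcal{U}'=\mathcal{U}'_{R_{(q-1)}}$, and $\overline{B}$ for the image of $B\cap\mathcal{U}'$ in $\mathcal{U}\g$.
\begin{itemize}
\item The map $(B\cap\mathcal{U}')\otimes_R\C\to\mathcal{U}\g$ is automatically injective. Because $B$ is a $k$-subspace, $(B\cap\mathcal{U}')\cap(q-1)\mathcal{U}'=(q-1)(B\cap\mathcal{U}')$.
\item The generators give the easy inclusion $\mathcal{U}\g^\theta\subseteq\overline{B}$.
\item The real content is the reverse inclusion $\overline{B}\subseteq\mathcal{U}\g^\theta$, and your PBW idea gives exactly this. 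Take monomials $B_J$ whose leading terms run over a PBW basis of $\mathcal{U}^-$ (times the Cartan and $\mathcal{M}_X^+$ parts). They form a $k$-basis of $B$, and their $q=1$ images are linearly independent. Clearing denominators and reducing mod $q-1$ then shows that any $k$-combination of them lying in $\mathcal{U}'$ has coefficients in $R_{(q-1)}$. So they form an $R_{(q-1)}$-basis of $B\cap\mathcal{U}'$, and $\overline{B}$ is spanned by their images, which lie in $\mathcal{U}\g^\theta$.
\end{itemize}

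Step~2 is a genuinely different and much heavier route, and it is where your proposal has a real gap. Going through Proposition~\ref{prop:module-cat-reconstruction} only has content if the $q=1$ category is \emph{defined} by specializing the generic $\iend(\unit)$. Then you must show that the lattice of $B$-invariants in each Weyl lattice has rank equal to the classical multiplicity. Your upper bound (saturation plus $\mathcal{U}\g^\theta\subseteq\overline{B}$) is fine. The lower bound, that every classical spherical vector lifts, does not follow from Step~1 at all. It is a quantum Cartan--Helgason type theorem (Letzter's work on quantum zonal spherical functions), and ``lifting via the Iwasawa decomposition with a case analysis'' is not yet an argument.

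For the theorem as stated you need none of this, so drop Step~2. What it would buy is a stronger no-jump statement: specializing the generic $\iend(\unit)$ gives $\iend(\unit)$ of the $q=1$ category. The paper uses this implicitly in Lemma~\ref{lem:qmm-q=1}. If you pursue it, note that $\mathcal{U}\g^\theta$-invariants only detect $(G^\theta)^\circ$. For $\GL_n\supset O_n$ (Type AI), the quantum determinant restricts to the trivial $B$-module, while $\det|_{O_n}$ is the sign character. So the correct target rank is $\dim(V_\lambda^*)^{\g^\theta}$, not $\dim(V_\lambda^*)^{G^\theta}$.
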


Formally speaking, quantum symmetric pairs arise from \textit{quasi-triangular Hopf algebras} and \textit{quasi-triangular coideal subalgebras}, whose module categories provide the braided module categories we aim to recall (see \cite{Kol20} for a detailed exposition). To summarize these notions for the reader's convenience, let $H$ be a quasi-triangular Hopf algebra and $\A= H\modu$ its category of modules which admits the structure of a braided monoidal category. 

\begin{definition}\label{def:quasi-triangular-coideal}
    Let $H$ be a quasi-triangular Hopf algebra with universal $R$-matrix $\mathcal{R} \in H$. A subalgebra $B\subset H$ is called a \textit{(left) coideal} if $\Delta(B)\subset B\otimes H$. A coideal subalgebra $B$ is called \textit{quasi-triangular} if there is an invertible element $\mathcal{K}\in B\otimes H$ such that 
    \begin{align}
    & \mathcal{K} \Delta(b) = \Delta(b)\mathcal{K}\quad\forall b \in B~, \\ 
    & (\Delta\otimes \id)(\mathcal{K}) = \mathcal{R}_{32}\mathcal{K}_{13}\mathcal{R}_{23}~,\\ 
    &(\id\otimes \Delta)(\mathcal{K}) = \mathcal{R}_{32} \mathcal{K}_{13} \mathcal{R}_{23} \mathcal{K}_{12}~.
    \end{align}
    The element $\mathcal{K}$ is then called the \textit{universal $K$-matrix}.
\end{definition}
The category $\M= B\modu$ of $B$-modules inherits naturally the structure of an $\A$-module category when $B$ is a coideal. If in addition, $B$ is quasi-triangular with a universal $K$-matrix $\mathcal{K}\in B\otimes H$, then $\M$ becomes braided with $e$ given by multiplication by $\mathcal{K}$. 

\begin{rem}\label{rem:K-matrix-vs-J-matrix}
    The datum of a universal $K$-matrix $\mathcal{K}\subset B \otimes H$ for a coideal subalgebra $B\subset H$ is equivalent, by \cite[Lemma 2.9]{Kol20}, to the datum of an invertible element $ \mathcal{J} \in H$ (or a suitable completion thereof) satisfying: 
\begin{align}
    & \mathcal{J} b  = b \mathcal{J} \quad\forall b \in B~, \\ 
    & \mathcal{R}_{21}\mathcal{J}_{2}\mathcal{R}_{12}\in B\otimes H,\\ 
    &\Delta(\mathcal{J}) = \mathcal{R}_{21} \mathcal{J}_2 \mathcal{R}_{12} \mathcal{J}_1~.
\end{align}
The equivalence is given by mapping a universal $K$-matrix $\mathcal{K}$ to $\mathcal{J}:= (\varepsilon\otimes \id)(\mathcal{K})\in H$ and, conversely, $\mathcal{J} \in H$ to $\mathcal{K}:= \mathcal{R}_{21}\mathcal{J}_{2}\mathcal{R}_{12}$. We will abuse terminology and refer to both $\mathcal{J}$ and $\mathcal{K}$ as the universal $K$-matrix.
\end{rem}

Theorem~\ref{thm:Kolb-specializability} is provided by deforming the coideal subalgebra\footnote{This algebra is denoted by $B_{\mathbf{c},\mathbf{s}}$ in \cite{Kol20}.} by specializable parameters $\mathbf{c},\mathbf{s}$  
$$\mathcal{U}\mathfrak{g}^\theta \subset \mathcal{U}\mathfrak{g} \, \rightsquigarrow \, \mathcal{U}_{\mathbf{c}, \mathbf{s}} \mathfrak{g}^\theta \subset \mathcal{U}_q\mathfrak{g}~,$$
with the algebras on the right hand side being appropriate integral forms over the ring $R$. That is,
\[\C\otimes_{R}\left(\mathcal{U}_{\mathbf{c}, \mathbf{s}} \mathfrak{g}^\theta\right)\cong \mathcal{U}\mathfrak{g}^{\theta}~.\]
The coideal subalgebra $\mathcal{U}_{\mathbf{c},\mathbf{s}}\mathfrak{g}^\theta$ admits a quasi-triangular structure by the construction of the universal $K$-matrix in \cite[Corollary 9.6]{BK}, which by \cite[Cor.\ 3.18]{Kol20} equips $\Rep_{\mathbf{c},\mathbf{s}}(G^\theta)$ with a braided module structure over $\Rep_q(G)$ (or its $\Z_2$-equivariantization in the outer case, as we will explain next).   

\subsubsection{Outer involutions and equivariantization}
\label{subsec:equivariantization}

In the case when $\theta$ is an \textit{outer} involution, \ie $\theta$ is an automorphism of $G$ which is not induced by the adjoint action of any element in $G$, a slight modification of the above recollection is required. In this case, we consider the equivariantized category $\mathrm{Rep}(G)^{\Z_2} := \mathrm{Rep}(G \rtimes \Z_2)$ with $\Z_2$ acting by $\theta$. Then $G^\theta$ embeds as a subgroup of $G \rtimes \Z_2$ via $h \mapsto (h,+)$, fixed by the conjugation action of the element $J = (e, -)$. As such, we have a $\mathrm{Rep}(G)^{\Z_2}$-module structure on $\mathrm{Rep}(G^\theta)$ by restriction as before, and \cite[Cor.\ 3.18]{Kol20} gives a braided deformation $\mathrm{Rep}_{\mathbf{c}, \mathbf{s}}(G^\theta)$ over the category $\mathrm{Rep}_q(G)^{\Z_2} := \left(\mathcal{U}_q\mathfrak{g} \rtimes \Z_2\right)\modu$ for specializable parameters.

Hence, we have the following fundamental Theorem due to \cite{Kol20} and \cite{BK}:
\begin{thm}\label{thm:Kolb-thm}
    The category $\Rep_{\mathbf{c},\mathbf{s};q}(G^\theta)$ is a braided module category over $\Rep_q(G\rtimes \langle \sigma \rangle)\cong \Rep_q(G)^{\langle\sigma\rangle}$. The group $\langle \sigma\rangle$ is trivial in the inner and $\Z_2$ in the outer case. 
\end{thm}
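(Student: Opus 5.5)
The statement follows from combining the algebraic input of Balagović--Kolb with the categorical packaging of Kolb, together with Remark~\ref{rem:K-matrix-vs-J-matrix}. The plan is to exhibit $\mathcal{U}_{\mathbf{c},\mathbf{s}}\mathfrak{g}^\theta$ as a quasi-triangular coideal subalgebra, in the sense of Definition~\ref{def:quasi-triangular-coideal}, of the quasi-triangular Hopf algebra $H=\mathcal{U}_q\mathfrak{g}\rtimes\langle\sigma\rangle$, suitably completed. The general mechanism recalled after that definition then makes $\M=B\modu$ a braided module over $H\modu$, with $e_{x,m}$ given by the action of $\mathcal{K}$.

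\textbf{Step 1: ambient category and coideal property.} Set $H=\mathcal{U}_q\mathfrak{g}$ in the inner case. In the outer case set $H=\mathcal{U}_q\mathfrak{g}\rtimes\Z_2$, where $\sigma$ acts by the diagram automorphism underlying $\theta$. The $R$-matrix of $\mathcal{U}_q\mathfrak{g}$ is $\sigma$-invariant, so it still defines a quasi-triangular structure on $H$. Its category of Type~I modules is $\Rep_q(G\rtimes\langle\sigma\rangle)$. Since $\sigma$ acts on $\Rep_q(G)$ by braided autoequivalences, this category is identified with the equivariantization $\Rep_q(G)^{\langle\sigma\rangle}$. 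Next, check from the explicit generators $B_i=F_i+c_i\,\theta_q(F_iK_i)K_i^{-1}+s_iK_i^{-1}$ (plus $\mathcal{U}_q\mathfrak{h}^\theta$ and the Levi part) that $\Delta(B)\subset B\otimes H$. This is a generator-by-generator computation already in \cite{Kol14}.

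\textbf{Step 2: the $K$-matrix.} Take the universal $K$-matrix $\mathcal{J}$ of \cite[Corollary 9.6]{BK}. It is built as $\mathcal{J}=\mathfrak{X}\,\xi\,T_{w_X}^{-1}T_{w_0}^{-1}$, or a variant with $\sigma$ inserted in the outer case, and lives in a completion of $H$ acting on Type~I modules. Verify the three conditions of Remark~\ref{rem:K-matrix-vs-J-matrix}: commuting with $B$, $\mathcal{R}_{21}\mathcal{J}_2\mathcal{R}_{12}\in B\otimes H$, and the coproduct formula $\Delta(\mathcal{J})=\mathcal{R}_{21}\mathcal{J}_2\mathcal{R}_{12}\mathcal{J}_1$. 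Then pass to $\mathcal{K}$ via \cite[Lemma 2.9]{Kol20}. The conditions on $\mathcal{K}$ in Definition~\ref{def:quasi-triangular-coideal} translate directly into the axioms of Definition~\ref{def:br-mod-cat}:
\begin{itemize}
\item commuting with $\Delta(B)$ gives naturality of $e$;
\item the first coproduct identity gives the compatibility for $e_{x,y\lact m}$;
\item the second coproduct identity gives the compatibility for $e_{x\otimes y,m}$.
\end{itemize}
One must keep track of the left-versus-right convention of Remark~\ref{rem:brmod-convention}. This is exactly \cite[Cor.\ 3.18]{Kol20}.

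\textbf{Main obstacle.} The difficulty is that $\mathcal{J}$ is not an element of $H$ but of a completion: it is an infinite sum, and it involves Lusztig braid group operators. One therefore has to show that it acts well on every object of $\Rep_q(G\rtimes\langle\sigma\rangle)\boxtimes\Rep_{\mathbf{c},\mathbf{s}}(G^\theta)$. On finite-dimensional Type~I modules the quasi-$K$-matrix $\mathfrak{X}$ acts by a finite sum because the weights are bounded, so it is well defined. In the outer case one must additionally check that the $\sigma$-twist in the definition of $\mathcal{J}$ makes it genuinely commute with $B$ and satisfy the coproduct formula inside $H=\mathcal{U}_q\mathfrak{g}\rtimes\Z_2$; without $\sigma$ these identities hold only up to the twist by $\theta$. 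I would handle this by applying \cite{BK} and \cite{Kol20} verbatim, checking only that their parameter assumptions hold for the specializable $\mathbf{c},\mathbf{s}$. Those assumptions are $c_i$ and $s_i$ satisfying the Letzter--Kolb conditions, which is part of the meaning of specializable here.
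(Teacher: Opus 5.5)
Your proposal is correct and follows the paper's route: the paper gives no independent argument, but obtains the theorem by combining the universal $K$-matrix of \cite[Corollary 9.6]{BK} with \cite[Cor.\ 3.18]{Kol20}, passing to $\mathcal{U}_q\mathfrak{g}\rtimes\Z_2$ in the outer case just as you do. One small precision: commuting with $\Delta(B)$ is what makes each $e_{x,m}$ $B$-linear, whereas naturality in $m$ is what uses $\mathcal{K}\in B\otimes H$.
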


\subsubsection{The balancing of quantum symmetric pairs}

While Theorem~\ref{thm:Kolb-thm} does not include the data of a balancing on $\Rep_{\mathbf{c},\mathbf{s}}(G^\theta)$ with respect to its braided $(\Rep_qG)^\sigma$-module structure, it is known abstractly from \cite[Theorem\ 3.12]{BZBJ18b} that a canonical balancing exists. We present the balancing explicitly in the setting of Theorem~\ref{thm:Kolb-thm}. 

Let us first consider the Hopf-algebraic perspective of balancing structures. Let $H$ be a ribbon Hopf algebra with universal $R$-matrix $R$ and ribbon element $v$. The category $\Rep(H)$ is a ribbon tensor category with twist given by the action of $v^{-1}$, \ie $\theta_X:= v^{-1}. (-)$. If $B$ is a quasi-triangular coideal subalgebra in the sense of Definition~\ref{def:quasi-triangular-coideal} with universal $K$-matrix $K$, then an invertible element $z\in H$ defines a balancing $\varphi:\Rep(B)\xrightarrow{\sim}\Rep(B)$ via $\varphi_M:= z. (-)$ if and only if 
\begin{equation}\label{eq:balancing-element}
    z\cdot b = b\cdot z \quad \forall b \in B  \quad \text{and} \quad \Delta(z) = K\cdot(z\otimes 1) ~.
\end{equation}
Returning to the quantum symmetric pair case we observe that such an element exists by taking $z= \mathcal{J}$ as in Remark~\ref{rem:K-matrix-vs-J-matrix}. The balancing condition in \eqref{eq:balancing-element} is given in \cite[Eq.\ (3.37)]{Kol20}.

\section{Codimension two defects}
\label{sec:skeins-def}

In this section we construct skein theory in the presence of codimension two defects (Section~\ref{subsec:sk-def}), achieving one of the main goals (Theorem \ref{thm:intro-def-sk-TQFT}) of this project. We then relate the preceding construction to stratified factorization homology on surfaces in Section~\ref{subsec:fact-hom}, following the strategy of Cooke \cite{Cooke} in the case without defects.

\subsection{Skein construction with codimension two defects}
\label{subsec:sk-def}

In this section, we will construct a skein TQFT with codimension two defects for a fixed pair $(\mathcal{A},\mathcal{M})$ consisting of a ribbon tensor category $\mathcal{A}$ and a (balanced) braided module category $\mathcal{M}$ over $\mathcal{A}$. For a recent consideration of skein theory with non-trivial codimension one and codimension two defects see also \cite{BGJV}.

We will first define the source category $\tglBord$. 

\begin{definition}\label{def:tgl}
    A \textit{tangle} $T$ in an oriented $3$-manifold $M$ is a compact oriented $1$-dimensional submanifold, such that $\partial T = T\cap \partial M$.
\end{definition}
The orientation of $T$ induces an orientation of its boundary points $\partial T$, \ie a sign $\epsilon_i$ for each point $x_i \in T \cap \partial M$. The set of pairs $(x_i, \epsilon_i)$ will be denoted by $P_T$. A tangle $T$ in $M $ such that $P_T = \varnothing$ is the same as an oriented framed link embedding $L \subset M$. 

\begin{definition}\label{def:bord-tgl}
    The \textit{category of 3-bordisms with tangles} $\Bord_3^\mathrm{tgl}$ consists of:
    \begin{itemize}
        \item Objects are pairs $(\Sigma, P)$ of a compact oriented surface $\Sigma$ along with a finite set $P$ of pairs $(x_i,\epsilon_i)$ of distinct points $x_i \in \Sigma$ and signs $\epsilon_i \in \{\pm\}$. 
        \item A morphism $(\Sigma, P ) \rightarrow (\Sigma', P')$ is represented by a pair $(M,T)$ consisting of an oriented 3-bordism $M: \Sigma \rightarrow \Sigma'$ between the underlying surfaces and a tangle $T$ in $M$ such that $(\partial M, P_T)\simeq (-\Sigma \sqcup \Sigma', -P \sqcup P')$. We identify pairs $(M,T)\sim (M',T')$ whenever there is a diffeomorphism of 3-bordisms $\phi: M \simeq M'$ which satisfies $\phi(T) = T'.$
    \end{itemize}
\end{definition}
We assume positive orientation whenever any point orientations are omitted. 

Furthermore, for a set $D$ one can define the category $\Bord_3^\mathrm{tgl}(D)$ of \textit{bordisms with line defect labelled by $D$} as above where now tangles are labelled by elements of the set $D$, \ie, if $T\subset M$ is a tangle in $M$, then it carries a labelling map $f: \pi_0(T) \to D$. The identification of bordisms then reads $(M,T,f)\sim (M',T',f')$ whenever there is a diffeomorphism $\phi: M \simeq M'$ compatible with the tangles and their $D$-labellings. If $D$ consists of a single label, \ie $D=\{\ast\}$, then tautologically $\Bord_3^\mathrm{tgl}(\{\ast\}) = \Bord_3^\mathrm{tgl}$.

The category $\Bord_3^\mathrm{tgl}(D)$ is a symmetric monoidal category in the usual sense. 

\begin{definition}\label{def:TQFT-line-def}
    Let $\mathcal{S}$ be a symmetric monoidal category. A \textit{3d TQFT with line defects (labelled by $D$) and with values in $\mathcal{S}$} is a symmetric monoidal functor 
    \[\mathcal{Z}: \Bord_3^\mathrm{tgl}(D) \longrightarrow \mathcal{S}~.\]
\end{definition}

There is an obvious inclusion functor
\begin{equation}
    \iota: \Bord_3 \hookrightarrow \Bord_3^\mathrm{tgl}(D)
\end{equation}
by equipping surfaces (resp.\ bordisms) with the empty set $P= \varnothing$ (resp.\ empty tangle $T = \varnothing$). The inclusion $\iota$ is not full, since we see that
\begin{equation*}
\Hom_{\Bord_3^\mathrm{tgl}(D)}\left(\left(\Sigma,\varnothing\right), \left(\Sigma',\varnothing\right)\right)
\end{equation*}
consists of $3$-bordisms from $\Sigma\rightarrow \Sigma'$ with embedded oriented links labelled by $D$. Furthermore, we have the full, essentially surjective forgetful functor 
\begin{equation}
    F: \Bord_3^\mathrm{tgl}(D) \rightarrow \Bord_3~,
\end{equation}
which is a left inverse of $\iota$, \ie $F \circ ~\iota = \Id$. 

\begin{definition} \label{def:AM-ribbons}
Let $\AM$ be a pair of a ribbon tensor category $\A$ and a balanced braided $\A$-module category $\M$, all linear over some coefficient ring $k$. 
    \begin{itemize}
        \item An \textit{$(\mathcal{A}, \mathcal{M})$-labelling} of an object $(\Sigma, P)\in \Bord_3^\mathrm{tgl}$ is a finite collection of $\mathcal{A}$-labelled framed points in $\Sigma\setminus P$ along with $\M$-labellings and framings (tangent vectors) for $P$. 
        \item An \textit{$(\mathcal{A}, \mathcal{M})$-ribbon graph} in a 3-manifold $M$ with a tangle $T$ is a finite oriented ribbon graph $\Gamma$ in $M$ containing $T\subset \Gamma$ such that $\Gamma \setminus T$ is an $\mathcal{A}$-labelled ribbon graph, and $T$ is an $\mathcal{M}$-coloured ribbon graph (see Figure~\ref{fig:line-def-local}). 
        \item The $R$-linear category of $\AM$-ribbon graphs in $\Sigma\times [0,1]$ is denoted $\operatorname{Rib}_\AM(\Sigma,P)$ and consists of:
        \begin{itemize}
        \item $\AM$-labellings of $(\Sigma,P)$ as objects and 
        \item $\Hom_{\operatorname{Rib}_\AM(\Sigma,P)}(\mathscr{L}_0,\mathscr{L}_1):= k\langle \AM\text{-ribbon graphs }\Gamma\text{ in }(\Sigma\times [0,1], P\times [0,1])\mid \Gamma|_{\Sigma \times \{i\} = \mathscr{L}_i}\rangle$.
        \end{itemize}
    \end{itemize}
\end{definition}

\begin{figure}
\centering
\begin{tikzpicture}[yscale=1]
    \draw[red] (1,0) node [anchor=south west][inner sep=0.75pt]  [font=\tiny]  {$M$} -- (1,0.8);
    \draw[red] (1,1) -- (1,1.8) node [anchor=north west][inner sep=0.75pt]  [font=\tiny]  {$N$};
    \draw[draw=red] (0.8,0.8) rectangle ++(0.3,0.2) node [anchor=north west][inner sep=0.75pt]  [font=\tiny, color=red]  {$f$};
    \draw (0.3,0) node [anchor=south west][inner sep=0.75pt]  [font=\tiny]  {$V$} .. controls (0.3,0.5) and (0.8,0.3) .. (0.9,0.8);
\end{tikzpicture}
\caption{The $\mathcal{A}$-ribbon graphs that connect to the $\mathcal{M}$-labelled ribbon defect are described locally by the above picture. Here, the coupon is labelled by an $\mathcal{M}$-morphism $f\in \Hom_\mathcal{M}(V\triangleright M, N)$. The source and target are determined by the orientation of the underlying tangle.}
\label{fig:line-def-local}
\end{figure}

Recall that for the disk $\mathbb{D}$ without defects, the category $\operatorname{Rib}_\AM(\mathbb{D}) = \operatorname{Rib}_\A(\mathbb{D})$ has a natural ribbon tensor categorical structure and Reshetikhin--Turaev graphical calculus provides the evaluation functor
\begin{equation}\label{eq:A-evaluation-functor}
    \mathrm{ev}_\A: \operatorname{Rib}_\A(\mathbb{D})\longrightarrow \A
\end{equation}
which is a full surjective ribbon functor. This functor is used to define the local relations
\begin{equation}\label{eq:A-local-relations}
    \mathrm{lr}_\A := \ker(\mathrm{ev}_\A)
\end{equation}
which define $\A$-skein theory without defects. 

Now we write $\mathbb{D}_*$ for the 2-disk with a defect point in the center. Then the category $\operatorname{Rib}_\AM(\mathbb{D}_\ast)$ inherits the structure of a balanced braided module category over $\operatorname{Rib}_\A(\mathbb{D})$ which we describe graphically as follows. The $\operatorname{Rib}_\mathcal{A}(\mathbb{D})$-module structure is induced by the embedding $\mathbb{D}\cup \mathbb{D}_\ast \hookrightarrow \mathbb{D}_\ast$: 
\begin{equation}
\begin{tikzpicture}[baseline={([yshift=-.5ex]current bounding box.center)}]
    \filldraw[fill=gray!30, draw=gray, dotted] (-1.5,0) circle (0.5);
    \node at (-2,-0.5) {$\mathbb{D}$};
  \filldraw[fill=gray!30, draw=gray, dotted] (0,0) circle (0.5);
  \node at (-0.4,-0.5) {$\mathbb{D}_\ast$};
  \fill[red] (0,0) circle (1.5pt);
  \node at (-0.75,0) {$ \cup$};
\end{tikzpicture}
\hookrightarrow
\begin{tikzpicture}[baseline={([yshift=-.5ex]current bounding box.center)},scale = 1]
  \filldraw[fill=gray!10, draw=gray, dotted] (0,0) circle (1);
  \filldraw[fill=gray!30, draw=gray, dotted] (-0.6,0) circle (0.25);
  \filldraw[fill=gray!30, draw=gray, dotted] (0,0) circle (0.25);
  \fill[red] (0,0) circle (1.5pt);
\end{tikzpicture}
\end{equation}
while the braided module structure is given by the isotopy
\begin{equation}\label{eq:brmod-operad}
\begin{tikzpicture}[baseline={([yshift=-.5ex]current bounding box.center)},scale = 1]
  \filldraw[fill=gray!10, draw=gray, dotted] (0,0) circle (1);
  \filldraw[fill=gray!30, draw=gray, dotted] (-0.6,0) circle (0.25);
  \filldraw[fill=gray!30, draw=gray, dotted] (0,0) circle (0.25);
  \fill[red] (0,0) circle (1.5pt);
  \draw[->, gray
  ] (-0.5,-0.35) arc[start angle=-140, end angle=140, radius=0.6];
\end{tikzpicture}
\end{equation}

The balancing is induced by the $2\pi$-rotation of $\mathbb{D}_\ast$. 

\begin{prop}\label{prop:AM-evaluation-functor}
We have a full surjective functor obtained from graphical calculus of balanced braided module categories:
\begin{equation}\label{eq:AM-evaluation-functor}
\mathrm{ev}_\AM:\operatorname{Rib}_{\mathcal{A},\mathcal{M}}(\mathbb{D}_\ast) \longrightarrow \mathcal{M}~
\end{equation}
compatible with the Reshetikhin--Turaev evaluation $\mathrm{ev}_{\A}: \mathrm{Rib}_{\A}(\mathbb{D}) \to \A$.
\end{prop}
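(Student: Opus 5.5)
The plan is to build $\mathrm{ev}_\AM$ in the Reshetikhin--Turaev style, but working with the defect-stabilized braid group. We then check invariance using a presentation of $\AM$-ribbon graphs in $\mathbb{D}_\ast\times[0,1]$ by generators and relations. On objects, an $\AM$-labelling of $\mathbb{D}_\ast$ consists of $\A$-labelled framed points $(x_1,\epsilon_1),\dots,(x_n,\epsilon_n)$ together with the $\M$-labelled defect point $m$. First we isotope the points so they lie on a horizontal line, with the defect point rightmost. We then send the labelling to
\[x_1^{\epsilon_1}\lact(x_2^{\epsilon_2}\lact\cdots(x_n^{\epsilon_n}\lact m)),\]
where $x^{+}=x$ and $x^{-}=x^\ast$ (recall $\A$ is cp-rigid). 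This assignment is essentially surjective: for any $m$, the labelling with no $\A$-points maps to $m$.

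On morphisms, we put an $\AM$-ribbon graph $\Gamma$ in general position relative to the height function on $[0,1]$ and cut it into elementary slices. Each slice contains exactly one of the following, tensored with identities:
\begin{itemize}
\item an $\A$-crossing, cup, cap or coupon away from the defect line; we send this to $\mathrm{ev}_\A$ of that piece acting via $\lact$ on the remaining $\A$-factors;
\item a full loop of the rightmost $\A$-strand around the defect (the generator in \eqref{eq:brmod-operad}); we send this to $e^{\pm1}_{x,\,\cdot}$;
\item a full twist of the defect ribbon; we send this to $\phi^{\pm1}$;
\item a defect coupon $f\in\Hom_\M(V\lact M,N)$, as in Figure~\ref{fig:line-def-local}; we send this to $f$ itself.
\end{itemize}
The image of $\Gamma$ is the composite of these slice values. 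Fullness follows because every morphism of $\M$ is realized by a single defect coupon. Compatibility with $\mathrm{ev}_\A$ holds by construction, since $\A$-slices are evaluated by $\mathrm{ev}_\A$ and $\lact$ is a functor in the $\A$-variable. The resulting square between the module actions commutes.

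The main obstacle is well-definedness, that is, independence of the chosen decomposition and invariance under isotopy rel boundary in $\mathbb{D}_\ast\times[0,1]$. We would reduce this to a list of movie moves. Moves away from the defect are handled by the RT theorem for $\mathrm{ev}_\A$ together with naturality of the associators $a_{x,y,m}$. The genuinely new moves involve the defect line, and they fall into four families:
\begin{enumerate}
\item The mapping class group of the punctured disk, i.e.\ the braid group of type $B$. Its defining relation $e\,c\,e\,c=c\,e\,c\,e$ and the behaviour of a loop around a two-strand cable are exactly axioms (1)--(2) of Definition~\ref{def:br-mod-cat}.
\item Sliding $\A$-coupons, cups and caps through a loop around the defect. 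This follows from naturality of $e$ in $x$. For cups and caps we additionally need $e_{x^\ast,m}$ to be the mate of $e_{x,m}^{-1}$, which we would derive from axiom (2) applied to $e_{x\otimes x^\ast,m}$ together with $e_{\unit,m}=\id$.
\item Sliding defect coupons through the twist and through loops. This follows from naturality of $\phi$ and of $e$ in $m$.
\item The framing move relating a twist of the defect to an $\A$-strand encircling it. This is precisely the balancing identity $\phi_{x\lact m}=e_{x,m}\circ(\theta_x\lact\phi_m)$.
\end{enumerate}

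To avoid checking movie moves by hand, a cleaner route is the following. We identify $\operatorname{Rib}_\AM(\mathbb{D}_\ast)$ with the free balanced braided module category on $\M$ over the free ribbon category $\operatorname{Rib}_\A(\mathbb{D})$. This uses the operadic description in \eqref{eq:brmod-operad} and Remark~\ref{rem:brmod-E2} ($E_2^{\mathrm{or}}$-modules as algebras over the framed little disks with one marked disk). The functor $\mathrm{ev}_\AM$ is then the counit of this free/forgetful adjunction. We expect justifying this freeness, i.e.\ a type-$B$ analogue of Shum's coherence theorem, to be the hardest step. If it is not available, we would fall back on the explicit movie-move verification above.
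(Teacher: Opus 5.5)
Your construction is essentially the paper's: pass to the skeleton with all $\mathcal{A}$-points on a ray beside the defect, send a labelling to the iterated action on the defect label, and evaluate ribbon graphs by graphical calculus. Your list of defect moves (the type-$B$ relation and cabling from the braided-module axioms, naturality, and the balancing identity for the framing move) supplies the well-definedness check that the paper's proof only asserts.

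Two small points to tighten. First, general position by itself produces half-crossings of $\mathcal{A}$-strands passing behind or in front of the defect, with other events possibly in between. You therefore need a preliminary isotopy, or the standard presentation of cylinder tangles, to reduce these to full loops of the strand adjacent to the defect. Second, sliding a defect coupon with $\mathcal{A}$-legs through such a loop uses axiom (1) of Definition~\ref{def:br-mod-cat}, not only naturality of $e$ in $m$.
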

\begin{proof}
Consider the skeleton subcategory $\operatorname{Rib}_\AM \subset \operatorname{Rib}_\AM(\mathbb{D}_\ast)$ defined as follows: An object in $\operatorname{Rib}_\AM$ is an $\AM$-labelling with all $\A$-points on the positive $x$-axis and all framings (including for the $\M$-point) in the positive $x$-direction. The equivalence $\operatorname{Rib}_\AM\simeq \operatorname{Rib}_\AM(\mathbb{D}_\ast)$ follows from the choosing any isotopy from an arbitrary $\AM$-labelling on $\mathbb{D}_\ast$ to a representative $\AM$-labelling on the positive $x$-axis. 

Therefore, $\mathrm{ev}_\AM$ is determined by its restriction 
\[\mathrm{ev}_\AM: \operatorname{Rib}_\AM \to \M\]
which takes a representative $[m,x_1,\dots, x_n]$ to $m\ract x_1\otimes\dots\otimes x_n$ and an $\AM$-ribbon graph to the associated evaluated morphism in $\M$. 
\end{proof}
The kernel of this functor describes local relations near the $\M$-labelled defect and is denoted 
\begin{equation}
    \label{eq:AM-local-relations}
    \mathrm{lr}_\AM:= \ker\left(\mathrm{ev}_\AM\right) ~.
\end{equation}

\begin{definition}\label{def:skmod}
    Let $M$ be an oriented 3-manifold with a tangle $T$ and let $B$ be an $\AM$-labelling of $(\partial M,P_T)$. The \textit{defect} $\AM$-\textit{skein module} is defined as
    \[
    \sk_{\AM}(M,T;B) := k \langle\AM\text{-ribbon graphs in }(M,T)\rangle/(\mathrm{lr}_\AM, \mathrm{lr}_\A)~
    \]
    where we only consider labellings compatible with the boundary labelling $B$. 
\end{definition}
In particular, for the regular module $\mathcal{M} = \mathcal{A}$ we get the underlying $\mathcal{A}$-skein module of $M$. 

\begin{definition}\label{def:skcat}
    Let $\Sigma \in \Bord^\mathrm{tgl}_3$ be an oriented surface with point defects. The \textit{defect} $(\mathcal{A},\mathcal{M})$-\textit{skein category} $\sk_{(\mathcal{A},\mathcal{M})}(\Sigma)$ consists of:
    \begin{itemize}
        \item Objects: $(\mathcal{A},\mathcal{M})$-labellings of $\Sigma$ and
        \item Morphisms: $\Hom_{\sk_{(\mathcal{A},\mathcal{M})}(\Sigma)}(B,B') = \sk_{(\mathcal{A},\mathcal{M})}(\Sigma\times I; B \sqcup B')$~.
    \end{itemize}
    Composition is given by stacking defect skeins in the interval direction. 
\end{definition} 
Combining Definitions~\ref{def:skcat} and \ref{def:skmod}, a bordism $M:\Sigma_\mathrm{in}\rightarrow \Sigma_\mathrm{out}$ in $\Bord_3^\mathrm{tgl}$ gives rise to a profunctor:
\begin{equation}
\label{eq:skmod-profunctor}
\sk_{\AM}(M):\sk_{\AM}(\Sigma_\mathrm{in})\proarrow \sk_{\AM}(\Sigma_\mathrm{out})
\end{equation}

The above constructions realize skein theory with line defects as a defect TQFT, in the sense of Definition \ref{def:TQFT-line-def}, with values in the bicategory $\Bimod$ (Definition \ref{def:bimod}). 
\begin{thm}\label{thm:def-sk-TQFT}
The assignment $\Sigma\mapsto \sk_\AM(\Sigma)$ and Equation~\eqref{eq:skmod-profunctor} form a symmetric monoidal functor 
\begin{equation}\label{eq:sk-TQFT}
\sk_\AM: \Bord_3^\mathrm{tgl} \longrightarrow \Bimod~.
\end{equation}
\end{thm}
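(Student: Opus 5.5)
The plan is to follow the standard argument for ordinary skein theory as a functor $\Bord_3\to\Bimod$ (Walker, Johnson-Freyd). We will check the stratified features separately, using that all relations are local: either $\mathrm{lr}_\A$ away from $T$, or $\mathrm{lr}_\AM$ in a tubular neighbourhood of $T$ modelled on $\mathbb{D}_\ast\times[0,1]$.

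\textbf{Well-definedness on objects and morphisms.} A diffeomorphism $\phi:(M,T)\simeq(M',T')$ carries $\AM$-ribbon graphs to $\AM$-ribbon graphs. It also carries local relations to local relations, because $\mathrm{ev}_\A$ and $\mathrm{ev}_\AM$ are invariant under isotopy and (orientation-preserving) reparametrisation of the model disks. So $\phi$ induces an isomorphism of profunctors. For $\mathrm{ev}_\AM$ this invariance holds because it factors through the skeleton $\operatorname{Rib}_\AM$ of Proposition~\ref{prop:AM-evaluation-functor}. Different choices of isotopy to the skeleton differ by braidings and twists around the defect point, and these are sent to $e$ and the balancing $\phi$ of $\M$. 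The balancing is needed here, since a $2\pi$ rotation of $\mathbb{D}_\ast$ must evaluate consistently.

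The same argument makes $\sk_\AM(M)$ a bimodule: the left and right actions of $\sk_\AM(\Sigma_\mathrm{in})$ and $\sk_\AM(\Sigma_\mathrm{out})$ are given by gluing collars $\Sigma\times I$, which preserves relations. Identities are sent to identities, since $\sk_\AM(\Sigma\times I)$ is by definition the Hom-bimodule of $\sk_\AM(\Sigma)$. Monoidality is straightforward: a skein in a disjoint union is a pair of skeins, and the relations are generated componentwise, which gives $\sk_\AM(M\sqcup M')\cong\sk_\AM(M)\boxtimes\sk_\AM(M')$ naturally.

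\textbf{Gluing.} This is the main step. For $M:\Sigma_1\to\Sigma_2$ and $N:\Sigma_2\to\Sigma_3$, we must show the canonical map
\[
\int^{B\in\sk_\AM(\Sigma_2)}\sk_\AM(N;B,-)\otimes\sk_\AM(M;-,B)\longrightarrow \sk_\AM(N\cup_{\Sigma_2}M)
\]
is an isomorphism. The map itself is gluing along $\Sigma_2$.

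For surjectivity, we put any $\AM$-ribbon graph in general position with respect to $\Sigma_2$. Near defect points of $\Sigma_2$ we also make the defect tangle transverse, with coupons on $T$ pushed off $\Sigma_2$. The graph then meets $\Sigma_2$ in an $\AM$-labelling $B$ and splits as a pair of skeins.

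For injectivity, we show that the kernel of the map from pairs of graphs to skeins in the glued manifold is generated by the coend relations together with relations local to $M$ and to $N$. There are two sources of such kernel elements. The first is an isotopy crossing $\Sigma_2$: we decompose it into moves supported in small balls, or in small tubes $\mathbb{D}_\ast\times I$ around $T$. Each such move is realised by sliding a piece of skein across $\Sigma_2$, and this is exactly the coend identification $g\circ f\otimes h\sim g\otimes f\circ h$ for a morphism $f$ in $\sk_\AM(\Sigma_2)$. The second is a local relation in a ball or tube meeting $\Sigma_2$: by an isotopy we push the ball or tube entirely into $M$ or $N$, paying again with coend moves.

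The expected obstacle is the defect tube. An isotopy of the tube may wind around $T$. Then pushing it off $\Sigma_2$ requires moving $\A$-strands around the defect point on $\Sigma_2$. That is a morphism in $\sk_\AM(\Sigma_2)$ realised by braiding, so it is still a coend move, and the argument closes. Finally, symmetry of the monoidal structure and the coherence data (associativity of gluing, compatibility with the interchange) follow from the naturality of these isomorphisms. The proof is completed exactly as in the defect-free case.
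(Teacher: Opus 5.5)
Your proposal is correct and follows the route the paper relies on. The paper gives no written proof of Theorem~\ref{thm:def-sk-TQFT}; it treats the result as the defect-free Walker/Johnson-Freyd construction of $\sk_\A:\Bord_3\to\Bimod$, run with the extra tube relations $\mathrm{lr}_\AM$ from Proposition~\ref{prop:AM-evaluation-functor}. That is exactly your general-position and coend gluing argument, with the braiding and balancing of $\M$ absorbing the rotations of $\mathbb{D}_\ast$.

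One small slip: the defect tangle is fixed data of the bordism and is already transverse to $\Sigma_2$ via the collars. So in the surjectivity step only the $\A$-part of the graph, the coupons on $T$, and the framings at the defect points need to be moved into general position.
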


\begin{rem}\label{rem:defects-from-multiple-brmods}
    So far, we have restricted our attention to a single defect label $D= \{\M\}$ of a single braided module category. It is straightforward to include multiple such labels, \ie $D = \{\M_i\}_{i\in I}$ a family of braided module categories over $\A$. This will result into a skein theory with line defects forming a 3d TQFT
    \[\sk_{(\A,\{\M_i\})}: \Bord_3^\mathrm{tgl}(D) \longrightarrow \Bimod~.\]
\end{rem}

An immediate application of Theorem~\ref{thm:def-sk-TQFT} yields
\begin{cor}\label{cor:HH-skcat}
Let $\Sigma$ be a defect surface (\ie a surface with marked points). Considering the defect 3-manifold $\Sigma \times \mathbb{S}^1$ (where the defect links are the embedded circles traced out by the defect points), we have
\[
\sk_{\AM}(\Sigma\times \mathbb{S}^1) = \operatorname{HH}_0(\sk_\AM(\Sigma))
~.\]
That is, dimensional reduction on $\mathbb{S}^1$ corresponds to taking the $0$th Hochschild homology
\end{cor}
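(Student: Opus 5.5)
We deduce Corollary~\ref{cor:HH-skcat} from Theorem~\ref{thm:def-sk-TQFT} in the standard way, by writing $\Sigma\times\mathbb{S}^1$ as a composite of bordisms and evaluating the composite in $\Bimod$.

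\textbf{The decomposition.} Cut $\mathbb{S}^1$ at one point. This exhibits $(\Sigma\times\mathbb{S}^1, P\times\mathbb{S}^1)$, as a closed morphism $\varnothing\to\varnothing$ in $\tglBord$, as the composite of three pieces:
\begin{itemize}
\item the coevaluation cylinder $(\Sigma\times I, P\times I)\colon \varnothing\to (\Sigma,P)\sqcup(-\Sigma,-P)$;
\item the identity cylinder on $(\Sigma,P)$ (or nothing further);
\item the evaluation cylinder $(\Sigma\times I, P\times I)\colon (-\Sigma,-P)\sqcup(\Sigma,P)\to\varnothing$, composed after the symmetry.
\end{itemize}
This is the usual trace of the identity. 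The defect circles $P\times\mathbb{S}^1$ are exactly the glued-up strands $P\times I$ in each cylinder.

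\textbf{Evaluating the pieces.} Symmetric monoidality (Theorem~\ref{thm:def-sk-TQFT}) sends the composite to the composite of profunctors.
\begin{itemize}
\item $\sk_\AM(-\Sigma,-P)\simeq\sk_\AM(\Sigma,P)^{\opp}$. Reflecting $\Sigma\times I$ in the interval direction reverses both the orientation and the direction of morphisms.
\item Under this identification, the evaluation and coevaluation cylinders are sent to the Hom-bimodule $\Hom_{\sk_\AM(\Sigma)}(-,-)$, viewed as a profunctor $\sk_\AM(\Sigma)\boxtimes\sk_\AM(\Sigma)^{\opp}\proarrow \Mod_R$ and $\Mod_R\proarrow \sk_\AM(\Sigma)\boxtimes\sk_\AM(\Sigma)^{\opp}$ respectively. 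This holds by Definition~\ref{def:skcat}, since the morphism spaces of the skein category are literally the defect skein modules of $\Sigma\times I$ with boundary labellings $B\sqcup B'$.
\end{itemize}
Composing in $\Bimod$ via the coend formula then gives
\[
\sk_\AM(\Sigma\times\mathbb{S}^1)\cong\int^{B\in\sk_\AM(\Sigma)}\Hom_{\sk_\AM(\Sigma)}(B,B).
\]
This coend is by definition $\operatorname{HH}_0(\sk_\AM(\Sigma))$: the direct sum of $\End(B)$ over all $B$, modulo $fg\sim gf$ for composable $f\colon B\to B'$ and $g\colon B'\to B$.

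\textbf{A direct check.} To be safe, one can also verify the isomorphism directly, without the functoriality bookkeeping.
\begin{itemize}
\item Define a map $\bigoplus_B\End(B)\to\sk_\AM(\Sigma\times\mathbb{S}^1)$ by gluing the ends of $\Sigma\times I$. It kills $fg-gf$, because $f$ can be slid around the circle.
\item It is surjective: any $\AM$-ribbon graph can be isotoped to be transverse to a fibre $\Sigma\times\{pt\}$. Transversality away from the defect is generic. At the defect, $P\times\{pt\}$ is transverse by construction, and the graph can be chosen with no coupons on that slice. Cutting along the slice gives an element of some $\End(B)$.
\item It is injective modulo the cyclic relations: local relations ($\mathrm{lr}_\A$, $\mathrm{lr}_\AM$) occur in balls, which can be isotoped off the cut slice. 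Changing the cut slice, or moving the graph across it, is realized by relations of the form $fg\sim gf$.
\end{itemize}

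\textbf{Main obstacle.} The one genuinely delicate step is that last injectivity claim near the defect. The argument needs that an isotopy moving a strand across the slice, close to $P\times\mathbb{S}^1$, factors through morphisms in the defect skein category. This holds because such moves are themselves elements of $\sk_\AM(\Sigma\times I)$, a local ball around the defect strand being a $\mathbb{D}_\ast\times I$ picture. With that in place, the corollary is just the gluing axiom.
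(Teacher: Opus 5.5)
Your proposal is correct and follows the paper's route. The paper gives no written proof beyond calling the corollary an immediate consequence of Theorem~\ref{thm:def-sk-TQFT}: one writes $\Sigma\times\mathbb{S}^1$ as evaluation after coevaluation of the identity cylinder and computes the composite in $\Bimod$ as the coend $\int^{B}\Hom_{\sk_\AM(\Sigma)}(B,B)=\operatorname{HH}_0(\sk_\AM(\Sigma))$, exactly as you do. Your additional direct cutting argument is only a sanity check that the paper does not include.
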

While we do not continue along this train of thought as it would distract from our main purposes, we observe that similar strategies as in \cite{BJVV} may be employed to compute defect skein module dimensions using the preceding Corollary.

\subsection{Relation to factorization homology}
\label{subsec:fact-hom}

Just as the $\A$-skein category may be identified with the \textit{a priori} more universal construction of \textit{factorization homology} over a surface \cite[Theorem 4.3.1]{Cooke}, we show that the defect skein category can be computed via \textit{stratified factorization homology} \cite{AFT17,BZBJ18b} as well. 

Recall that factorization homology with coefficients $\AM$, denoted
$$\int_{\AM}: \mathrm{Man}_2^{\mathrm{or,mrk}} \longrightarrow \Bimod$$
is defined as the left Kan extension: 
\begin{equation}\label{eq:str-fact-hom}
\begin{tikzcd}
\operatorname{Disk}_2^\mathrm{or,mrk}\arrow[d, hook]\arrow[r, "\AM"] & \Bimod\\ 
\operatorname{Man}_2^\mathrm{or,mrk}\arrow[ur, dashed,swap, "\int_{\AM}"] 
\end{tikzcd}
\end{equation}
where $\mathrm{Disk}_2^{\mathrm{or,mrk}}$ (resp. $\mathrm{Man}_2^{\mathrm{or,mrk}}$) denotes the category whose objects are disjoint unions of $\mathbb{D}$ and $\mathbb{D}_*$ with oriented stratified embeddings (resp. oriented marked surfaces with oriented stratified embeddings).
\begin{prop}\label{prop:sk-cat-fact}    
    The defect $(\mathcal{A},\mathcal{M})$-skein category is computed by stratified factorization homology: if $(\Sigma,P)\in \Bord_3^\mathrm{tgl}$ is a surface with point defects, then
    \[
    \sk_{\AM} (\Sigma,P)\simeq \int_{(\Sigma,P)}\AM~.
    \]
\end{prop}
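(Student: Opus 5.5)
We follow Cooke's strategy \cite[Theorem 4.3.1]{Cooke}: we show that $(\Sigma,P)\mapsto \sk_{\AM}(\Sigma,P)$ satisfies the characterizing properties of stratified factorization homology. These are the values on basic disks, functoriality and symmetric monoidality for stratified embeddings, and excision (the $\otimes$-excision of \cite{AFT17}, valued in $\Bimod$). Since $\int_{\AM}$ is the left Kan extension \eqref{eq:str-fact-hom}, the universal property then gives a comparison map $\int_{(\Sigma,P)}\AM \to \sk_{\AM}(\Sigma,P)$. Excision for both sides, together with agreement on disks, shows that this map is an equivalence by induction on a handle (or collar-gluing) decomposition of $(\Sigma,P)$.

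\textbf{Functoriality and disk values.} First, $\sk_{\AM}$ extends to a symmetric monoidal functor $\operatorname{Man}_2^{\mathrm{or,mrk}}\to \Bimod$. An oriented stratified embedding $(\Sigma,P)\hookrightarrow(\Sigma',P')$ sends labellings to labellings and thickened skeins to skeins, and it respects both $\mathrm{lr}_\A$ and $\mathrm{lr}_\AM$ because these relations are local. Isotopies of embeddings give natural isomorphisms, and disjoint union goes to $\boxtimes$.

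Next we identify the values on the basic disks. By Reshetikhin--Turaev, $\sk_{\AM}(\mathbb{D})\simeq\A$. For the marked disk, Proposition~\ref{prop:AM-evaluation-functor} gives a full surjective functor $\mathrm{ev}_\AM$ whose kernel is exactly $\mathrm{lr}_\AM$. We must check that the skein relations in $\mathbb{D}_\ast\times I$ are generated by $\mathrm{lr}_\AM$ applied to skeins in subcylinders $\mathbb{D}'_\ast\times I$ and $\mathrm{lr}_\A$ applied in bulk balls. Any bulk relation can be isotoped so that it is an $\mathrm{lr}_\AM$ relation, since $\mathrm{ev}_\AM$ is compatible with $\mathrm{ev}_\A$. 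Hence $\sk_{\AM}(\mathbb{D}_\ast)\simeq\M$.

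We then check that the induced $E_2$-module structure matches the one coming from the embeddings in \eqref{eq:brmod-operad}. That is, we check that the $\mathbb{D}\sqcup\mathbb{D}_\ast\hookrightarrow\mathbb{D}_\ast$ action and the looping isotopy evaluate to $\lact$ and $e$, and that the $2\pi$ rotation evaluates to $\phi$. This holds by construction of $\mathrm{ev}_\AM$ on the skeleton $\operatorname{Rib}_\AM$.

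\textbf{Excision (main obstacle).} Consider a collar gluing $\Sigma=\Sigma_1\cup_{C\times\R}\Sigma_2$, with the defect points away from the collar. We need
\[\sk_{\AM}(\Sigma)\simeq \sk_{\AM}(\Sigma_1)\underset{\sk_\A(C\times \R)}{\boxtimes}\sk_{\AM}(\Sigma_2)\]
in $\Bimod$, where the relative tensor product is computed as a coend/bar construction. Since the defects are disjoint from the collar, we mimic Cooke's argument, which has two parts.

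Essential surjectivity: every $\AM$-labelling is isotopic to a union of labellings on the two pieces.

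Morphisms: the hom-modules of the glued category are generated by skeins that factor through the collar. This uses a general-position argument, isotoping a ribbon graph so that it meets $C\times\R\times I$ transversally in a "slice" presentation. Relations likewise decompose, since $\mathrm{lr}_\AM$ relations are supported in small balls around defect segments, and those balls can be isotoped off the collar.

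The delicate point is the mixed relations near the red strands: we must show that no new relations arise from isotopies that drag bulk skeins around a defect across the gluing region. The plan is to handle this by again using locality. Any isotopy decomposes into moves supported in balls or in $\mathbb{D}_\ast\times I$ neighborhoods, and each such move can be pushed into one side. Having established excision, the induction over a decomposition of $(\Sigma,P)$ into marked and unmarked disks concludes the proof.
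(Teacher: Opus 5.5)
Your proposal is correct and follows essentially the same route as the paper: identify $\sk_{\AM}(\mathbb{D}_\ast)\simeq\M$ via $\mathrm{ev}_\AM$, prove excision over unmarked annuli by transplanting Cooke's argument, and let the left-Kan-extension characterization of stratified factorization homology do the rest. The only difference is presentational: the paper reduces excision to gluing $\mathbb{D}_x$ onto $\Sigma\setminus\mathbb{D}_x$, and writes the comparison functor $\sk_\A(\Sigma\setminus\mathbb{D}_x)\boxtimes_{\sk_\A(\Ann)}\M\to\sk_{\AM}(\Sigma,\{x\})$ explicitly using left and right thickened embeddings, which is exactly the device your ``mimic Cooke'' step would need to make concrete.
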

\begin{proof}
Stratified factorization homology $(\Sigma,P) \mapsto \int_{(\Sigma,P)}(\mathcal{A},\mathcal{M})$ is uniquely characterized by the following properties (as a consequence of its presentation as a left Kan extension in \cite[Definition 2.5]{BZBJ18b}):
\begin{enumerate}
    \item For surfaces $\Sigma$ without defects, defect factorization homology with coefficients in $(\mathcal{A},\mathcal{M})$ coincides with usual factorization homology with coefficients in $\mathcal{A}$:
    $$\int_\Sigma \, (\mathcal{A},\mathcal{M}) \simeq \int_\Sigma \mathcal{A} \quad;$$
    \item for the 2-disk $\mathbb{D}_*$ with defect at the origin, we have
    $$\int_{\mathbb{D}_*}(\mathcal{A},\mathcal{M}) \simeq \mathcal{M} \quad;$$
    \item $\int_{-}(\mathcal{A},\mathcal{M})$ satisfies excision: for $(\Sigma,P) = (\Sigma_1, P_1) \, \cup_{\Ann} \, (\Sigma_2, P_2)$ a gluing of two marked surfaces over an unmarked annulus $\Ann$, then
    $$\int_{(\Sigma,P)}(\mathcal{A},\mathcal{M}) \simeq \int_{(\Sigma_1,P_1)}(\mathcal{A},\mathcal{M}) \, \otimes_{\int_{\Ann}\mathcal{A}} \, \int_{(\Sigma_2,P_2)}(\mathcal{A},\mathcal{M}).$$
\end{enumerate}
Condition (1) reduces to the fact that the $\AM$-skein category of a surface $\Sigma$ with no defects is equivalent to the $\A$-skein category of $\Sigma$, the latter of which has been identified with factorization homology $\int_\Sigma \A$ \cite[Theorem 4.3.1]{Cooke}.
Note that condition (3) along with $\int_{\mathbb{D}}{\AM}\simeq \A$ imply condition (1). 

For (2), note that Proposition \ref{eq:AM-evaluation-functor} and Definition \ref{def:skmod} computes the $\AM$-skein category of the defect disk $\mathbb{D}_*$ as   
\[\sk_\AM(\mathbb{D}_\ast):= \operatorname{Rib}_\AM(\mathbb{D}_\ast)/\ker(\mathrm{ev}_\AM) \simeq \M~.\]

Finally for (3), we may reduce to showing excision around a single defect point, \ie let $\Sigma$ be a connected surface with a point defect at $x \in \Sigma$, as the general case is similar. We shall establish that
\begin{equation} \label{eq:excision-of-one-point-defect}
    \sk_{\AM}(\Sigma,\{x\}) \simeq \sk_\A(\Sigma\setminus \mathbb{D}_x)\boxtimes_{\sk_\A(\Ann)}\M.
\end{equation}
where $\mathbb{D}_x$ is small disk around $x$. The proof is modelled on \cite[Theorem 4.2.16]{Cooke} so we only provide a sketch, emphasizing at times some key features in our setting. 

To this end, we introduce some topological terminology following \textit{loc. cit}, specializing to the cases we need. Let $\Sigma_L := \Sigma \setminus \mathbb{D}_x$, and let $\Sigma_R := \mathbb{D}_x$.
\begin{itemize}
    \item \cite[Definition 4.2.11]{Cooke}  A \textit{right (resp. left) thickened embedding} of $\mathbb{S}^1$ into $\Sigma_R$ (resp. $\Sigma_L$)  is a triple $(\Xi, E, \lambda)$ where (i) $\Xi: \mathbb{S}^1 \times (-\varepsilon,1] \hookrightarrow \Sigma_R$ (resp. $\Xi: \mathbb{S}^1 \times [0,1+\varepsilon) \hookrightarrow \Sigma_L$) such that $\Xi_1$ (resp. $\Xi_0$) gives an $\mathbb{S}^1$-parametrization of $\partial \Sigma_R$ (resp. $\partial \Sigma_L$), (ii) $E: M \hookrightarrow M$ is an embedding disjoint from the image of $\Xi$, and (iii) $\lambda: \mathrm{Id}_M \to E$ is an isotopy supported only on the image of $\Xi$.
    \item \cite[Remark 4.12]{Cooke} Given a left thickened embedding $(\Xi, E, \lambda)$ of $\mathbb{S}^1$ into $\Sigma_L$ and any object $a \in \sk_{\A}(\Sigma_L)$, the isotopy $\lambda$ traces out a ribbon tangle $r_{\lambda,a}: a \to E(a)$, such that for any morphism $f: a \to a'$ in $\sk_{\A}(\Sigma_L)$ the following diagram commutes
    \begin{equation}
        \begin{tikzcd}
	{a'} & {E(a')} \\
	a & {E(a)}
	\arrow["{r_{\lambda, a'}}", from=1-1, to=1-2]
	\arrow["f", from=2-1, to=1-1]
	\arrow["{r_{\lambda,a}}", from=2-1, to=2-2]
	\arrow["{E(f)}"', from=2-2, to=1-2]
\end{tikzcd}
    \end{equation}
    Given a right thickened embedding $(\Xi,E,\lambda)$ of $\mathbb{S}^1$ into $\Sigma_R$, analogous ribbon tangles can be constructed for $\sk_{(\A,\M)}(\Sigma_R) \simeq \M$, where we note that $\lambda$ is a trivial isotopy (and hence $E$ is the identity map) in a neighborhood of the center point $x \in \Sigma_R$.
    \item \cite[Definition 4.2.17]{Cooke} From now on, we fix a left thickened embedding $(\Xi_L, E_L,\lambda_L)$ of $\mathbb{S}^1$ into $\Sigma_L$. Let $m \in \sk_{\A}(\Sigma_L)$ and let $a,b \in \sk_{\A}(\Ann)$ where $a$ and $b$ have disjoint support in the annulus. Then our left thickened embedding defines a ribbon graph
    \begin{equation}
        \rho_{m;a,b}:= r_{\lambda_L,m \ract a} \sqcup \mathrm{Id}_{\varnothing \ract b}: m \ract (a \sqcup b) \longrightarrow (m \ract a) \ract b.
    \end{equation}
    Similarly, from now on we fix a right thickened embedding $(\Xi_R, E_R,\lambda_R)$ of $\mathbb{S}^1$ into $\Sigma_R$. Then for every object $n \in \sk_{(\A,\M)}(\Sigma_R) \simeq \M$ and $a,b \in \sk_{\A}(\Ann)$ as above, our right thickened embedding defines a ribbon graph
    \begin{equation}
        \rho_{a,b;n}:= r_{\lambda_R, b \lact n} \sqcup \mathrm{Id}_{a \lact \varnothing}: (a \sqcup b) \lact n \longrightarrow a \lact (b \lact n).
    \end{equation}
\end{itemize}
With these topological definitions established, we can proceed to establish \eqref{eq:excision-of-one-point-defect} by building a functor $F$ going from right to left:
\begin{itemize}
    \item Given an object $(m,n)$ in $\sk_\A(\Sigma_L)\boxtimes_{\sk_\A(\Ann)}\M$, we define
    \begin{equation}
        F(m,n) := E_L(m) \sqcup E_R(n).
    \end{equation}
    \item By the definition of the relative tensor product over $\sk_{\A}(\Ann)$, the morphisms on the right hand side are generated by the following two types of morphisms: (i) $(f,g)$ where $f$ is a morphism in $\sk_{\A}(\Sigma_L)$ and $g$ is a morphism in $\sk_{(\A,\M)}(\Sigma_R)$, and (ii) invertible morphisms $\iota_{(m,a,n)}:(m \ract a,n) \to (m, a \lact n)$ for $(m,a,n) \in \sk_\A(\Sigma_L) \times \sk_\A(\Ann) \times \sk_{(\A,\M)}(\Sigma_R)$ (and their inverses, which are treated similarly). We define $F$ on $(f,g)$ by
    \begin{equation}
        F(f,g) := E_L(f) \sqcup E_R(g).
    \end{equation}
    We define $F$ on $\iota_{m,a,n}$ by 
    \begin{equation}
        F(\iota_{(m,a,n)}) := r^{-1}_{\lambda_L,E_L^2(m)} \, \sqcup \, r_{\lambda_R,a} \circ r^{-1}_{\lambda_L,E_L(a)} \, \sqcup \, r_{\lambda_R,n}
    \end{equation}
    as a morphism in $\sk_{(\A,\M)}(\Sigma_L \sqcup_{\Ann} \Sigma_R)$ between
    $$E^2_L(m) \sqcup E_L(a) \sqcup E_R(n) \longrightarrow E_L(m) \sqcup E_R(a) \sqcup E_R^2(n).$$
\end{itemize}
The fact that the preceding description of $F$ is well-defined, and that $F$ is essentially surjective and fully faithful follow verbatim the arguments of \cite[Theorem 4.2.16]{Cooke}. Indeed, the most subtle part is the full and faithfulness of $F$ for ribbon tangles in the annular region, where the arguments of \textit{loc. cit} applies without change.
\end{proof}

\section{Internal skeins with defects}
\label{sec:IS}

In this section, we will describe internal skeins with line defects extending the setting of \cite{GJS,JR}.

\subsection{Internal skein algebras}\label{subsec:ISA}

We aim to apply the monadic reconstruction results of Proposition~\ref{prop:braided-module-cat-reconstruction} to reconstruct defect skein categories. Let $\Sigma\in\Bord_3^\mathrm{tgl}$ be a connected surface with point defects and fix a disk embedding $\mathbb{D}\hookrightarrow \Sigma$ in $\mathrm{Man}_2^{\mathrm{or,mrk}}$, in particular avoiding all defect points. Let $\pSigma$ be the associated punctured surface, whose puncture induces a braided $\A$-module category structure on $\sk_\AM(\pSigma)$.

\begin{defprop}\label{defprop}
Let $\M$ be a braided $\A$-module category and $\unit_\M \in \M$ be an $\A$-progenerator. The \textit{internal (defect) skein algebra} of $\pSigma$ is defined as
\[\skalg^{\mathrm{int}}_{\AM}(\pSigma):= \underline{\End}(\unit_{\pSigma}) \in \Ahat\]
where $\unit_{\pSigma}$ denotes the $\AM$-labelling of $\pSigma$ with all defects labelled by $\unit_\M$ and no $\A$-labelled points. 

In particular, we have an equivalence of $\widehat\sk_{\A}(\Ann)$-categories: 
\[
\sk_{\AM}(\pSigma) \simeq \skalg^{\mathrm{int}}_{\AM}(\pSigma)\modu(\widehat\sk_\A(\Ann))~.
\]
\end{defprop}

\begin{rem}\label{rem:ISA-disconnected-and-multiple-gates}
\begin{enumerate}
    \item If the surface $\Sigma$ above were disconnected, then we need to puncture each connected component of the surface once and consider an $\A^{\boxtimes m}$-action where $m=|\pi_0(\Sigma)|$. Otherwise, the progeneration hypothesis for monadic reconstruction will not be satisfied.
    \item One can also puncture a connected surface $\Sigma$ several times, say $k$ times, and consider the associated $\A^{\boxtimes k}$-action. The resulting internal skein algebra is one with $k$ gates (see \cite{JR}) and lives in $\Ahat^{\boxtimes k}$. 
\end{enumerate}
\end{rem}

\begin{example}\label{eg:connected-sum-skcat}
    For a fixed connected surface $S$, let $\M = \sk_\A(S^\ast)$ be the associated braided $\A$-module category. Let $\Sigma$ be a connected surface with a single $\M$-labelled defect. Then we have 
    \[\sk_\AM(\Sigma) \simeq \sk_\A(\Sigma\#S)\]
    and 
    \[\sk^{\mathrm{int}}_\AM(\pSigma) \simeq \sk^{\mathrm{int}}_\A(\pSigma\#S)~.\]
    In particular, $S= \mathbb{S}^2$ corresponds to the regular module $\M=\A$ and thus the transparent defect. 
\end{example}

Let $\Sigma_{g,n}^b$ denote the genus $g$ surface with $n$ defect points and $b$ boundary components. We also fix one boundary component to carry the $\Ann$-module structure. We introduce the following notation: 
\begin{equation}\label{eq:ISA-notation}
\sA_{\pSigma}:= \skalg^{\mathrm{int}}_{\AM}(\pSigma)\quad\text{and} \quad \sA_{g,n}^b := \skalg^{\mathrm{int}}_{\AM}(\Sigma_{g,n}^b)~.
\end{equation}
In particular, $\sA_{g,0}^1$ is the internal skein algebra of $\Sigma_g^\ast$ with no defects as in \cite{GJS}. We write in particular
\begin{align*}
    & \sA_{\Ann} = \sA_{0,0}^2 \simeq \Ocal_\A 
    &\sA_{\mathbb{D}_\ast} = \sA_{0,1}^1 \simeq \iend(\unit_\mathcal{M}) \\
    & \sA_{\ptorus} = \sA_{1,0}^1 =: \Dcal_\A
    &
\end{align*}
or in pictures: 
\begin{equation}
\begin{tikzpicture}[baseline={([yshift=-.5ex]current bounding box.center)}]
  \filldraw[fill=gray!30, draw=gray,dotted] (0,0) circle (0.5); 
  \fill[fill = white, draw=gray,dotted] (0,0) circle (0.2);
\end{tikzpicture}~\overset{\sA}{\mapsto} \Ocal_\A, \quad 
\begin{tikzpicture}[baseline={([yshift=-.5ex]current bounding box.center)}]
  \filldraw[fill=gray!30, draw=gray] (0,-0.4) .. controls (-0.1,-0.4) and (-1,-0.8) .. (-1,-0.2) .. controls (-1,0.6) and (-0.1,0) .. (0,0); 
  \filldraw[fill=gray!20,draw=gray, dotted] (0,0) arc (90:270:0.1 and 0.2);
  \filldraw[fill=gray!20, draw=gray, dotted] (0,-0.4) arc (-90:90:0.1 and 0.2);
  \filldraw[fill=white, draw=gray] (-0.7,-0.2) .. controls (-0.6,-0.3) and (-0.5,-0.3) .. (-0.4,-0.2)
  .. controls (-0.5,-0.1) and (-0.6,-0.1) .. (-0.7,-0.2);
\end{tikzpicture}\overset{\sA}{\mapsto} \Dcal_\A, \quad
\begin{tikzpicture}[baseline={([yshift=-.5ex]current bounding box.center)}]
  \filldraw[fill=gray!30, draw=gray, dotted] (0,0) circle (0.5);
  \fill[red] (0,0) circle (1.5pt);
\end{tikzpicture}\overset{\sA}{\mapsto} \iend(\unit_\M).
\end{equation}
The following proposition is a generalization of \cite[Cor.\ 3.4]{JR} by including point defects:
\begin{prop}\label{prop:ISA-decomp}
    The internal skein algebra $\sA_{g,n}^b$ admits the following tensor decomposition 
    \[ \sA_{g,n}^b \simeq \Dcal_\A^{\totimes g}\totimes \Ocal_\A^{\totimes b-1} \totimes \iend(\unit_\M)^{\totimes n}\]
\end{prop}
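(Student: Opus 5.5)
The plan is to follow the strategy of \cite[Cor.\ 3.4]{JR} (itself based on \cite{BZBJ18a,GJS}): decompose the punctured surface $\Sigma_{g,n}^b$ as an iterated \emph{boundary connected sum} of elementary pieces, and use that boundary connected sum of gated surfaces corresponds to the braided tensor product $\totimes$ of internal endomorphism algebras in $\Ahat$. Concretely, $\Sigma_{g,n}^b$ (with its distinguished gate) is diffeomorphic, as a marked surface with a chosen gate interval, to the boundary connected sum of $g$ copies of $\ptorus$, $b-1$ copies of $\Ann$, and $n$ copies of $\mathbb{D}_\ast$, glued along intervals on their gated boundary components. The elementary computations $\sA_{\ptorus}=\Dcal_\A$, $\sA_{\Ann}\simeq\Ocal_\A$ and $\sA_{\mathbb{D}_\ast}\simeq\iend(\unit_\M)$ are already recorded above; the last one uses Proposition~\ref{prop:sk-cat-fact} (condition (2): $\sk_\AM(\mathbb{D}_\ast)\simeq\M$) together with Proposition~\ref{prop:braided-module-cat-reconstruction}.

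The key step is a gluing lemma. For gated marked surfaces $S_1,S_2$, I will show
\[\sk_\AM(S_1\natural S_2)\simeq \sk_\AM(S_1)\boxtimes_\A \sk_\AM(S_2)\]
as $\A$-module categories. Here the $\A$-action on the right is via the diagonal (tensor) action, which is defined using the braided structure of the first factor. To prove this, I will apply excision, Proposition~\ref{prop:sk-cat-fact}(3), along the gluing disk. The defect points lie away from the gluing region, so the argument in \cite{BZBJ18a} carries over verbatim to the stratified setting. I will then identify the distinguished object $\unit_{S_1\natural S_2}$ with $\unit_{S_1}\boxtimes\unit_{S_2}$, noting that all defects are labelled by $\unit_\M$ and there are no $\A$-points.

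Next I apply the standard computation of \cite[Sect.\ 5]{BZBJ18a}. If $\M_i\simeq \Mod_\A\mbox{-}A_i$ with $A_i=\iend(m_i)$ and each $m_i$ is an $\A$-progenerator, then $m_1\boxtimes m_2$ is again an $\A$-progenerator for the diagonal action, and $\iend(m_1\boxtimes m_2)\simeq A_1\totimes A_2$. The multiplication on $A_1\totimes A_2$ uses the braiding of $\Ahat$ to pass $A_2$ past $A_1$. Iterating this $g+b-1+n$ times gives the claimed decomposition. Since the order of pieces around the gate is fixed, the $\totimes$ is taken in that order, and the chosen ordering only matters up to isomorphism.

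The main obstacle I expect is verifying the progenerator hypothesis in the presence of defects. Projectivity of $\unit_\M$ gives cocontinuity, but conservativity of $\mathrm{act}^R$ for $\unit_{S_1}\boxtimes\unit_{S_2}$ requires that every labelling of $S_1\natural S_2$ be a quotient of something obtained from the gate. I would argue this by isotoping all $\A$-points to the gate, and by using that $\unit_\M$ generates $\M$ near each defect point, so every object of $\M$ is a quotient of some $x\lact\unit_\M$. This reduces the question to the separate generation statements for each piece.
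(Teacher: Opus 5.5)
Your proposal is correct and is essentially the paper's argument: the paper applies \cite[Theorem 3.2]{JR} (or \cite[Theorem 5.14]{BZBJ18a}) to the handle-and-comb decomposition of Figure~\ref{fig:hcomb-w-defects}, and that decomposition is exactly your iterated boundary connected sum of $g$ copies of $\ptorus$, $b-1$ annuli and $n$ copies of $\mathbb{D}_\ast$, combined with the braided-tensor-product gluing lemma. One small correction: the excision you need is along an interval (a strip), not along an annulus, so it is the general collar-gluing excision rather than the version stated in Proposition~\ref{prop:sk-cat-fact}(3); it yields $\boxtimes_\A$, and the residual $\A$-action on this product comes from the braiding of $\A$, not from any braided structure of the first factor.
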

\begin{proof}
The proof follows verbatim as in the case without defects by application of \cite[Theorem 3.2]{JR} or \cite[Theorem\ 5.14]{BZBJ18a} to the handle and comb decomposition in Figure~\ref{fig:hcomb-w-defects}.
\end{proof}
\begin{rem}\label{rem:ISA-decomp-multiple-gates}
    As in \cite{JR}, one can include multiple gates instead of a single gate and present a generalized version of Proposition~\ref{prop:ISA-decomp}. 
\end{rem}

\begin{figure}
    \centering
    \begin{tikzpicture}[scale=0.3]
\draw (0,3.5) to[out=270,in=90,looseness=0.6] (13,0);
\draw (27,3.5) to[out=270,in=90,looseness=0.6] (14,0);
\draw[red] (13,0) -- (14,0);
\foreach \i in {0,8} {
\draw (\i,4) arc (180:0:2.5 and 3);
\draw (\i+1,4) arc (180:0:1.5 and 2);
\draw (\i+2,4) arc (180:143:2.5 and 3);
\draw (\i+7,4) arc (0:110:2.5 and 3);
\draw (\i+3,4) arc (180:137:1.5 and 2);
\draw (\i+6,4) arc (0:90:1.5 and 2);
}
\foreach \i in {16,20} {
\draw (\i,4) arc (180:0:1.5 and 1.5);
\draw (\i+1,4) arc (180:0:0.5 and 0.5);
}
\foreach \i in {0,2,...,26} {
  \draw[red] (\i,4) -- (\i+1,4);  
  \draw[red] (\i,3.5) -- (\i+1,3.5);  
  \ifthenelse{\i <26}{\draw (\i+1,3.5) arc (-180:0:0.5 and 0.5)}{};
}
\foreach \i in {24,26}{
\draw (\i,4) arc (180:0:0.5 and 1);
\fill[red] (\i+0.5,4.4) circle (2pt);
}
\node at (7.6,6.3) {$\dots$};
\draw [decorate,decoration={brace,amplitude=10pt}]
  (0,7.5) -- (15,7.5) node[midway,yshift=1.5em]{\small $2g$ handles};
  \node at (19.6,5.2) {$\dots$};
\draw [decorate,decoration={brace,amplitude=7pt}]
  (16.5,6) -- (22.5,6) node[midway,yshift=1.5em]{\small $b-1$ handles};
  \draw [decorate,decoration={brace,amplitude=7pt}]
  (24.5,5.2) -- (26.5,5.2) node[midway,yshift=1.5em]{\small $n$ defects};
\end{tikzpicture}
    \caption{The handle and comb presentation of the surface $\Sigma_{g,n}^b$.}
    \label{fig:hcomb-w-defects}
\end{figure}

\subsection{Internal skein modules}
\label{subsec:ISM}
Thus far, we have considered internal skeins on surfaces where the story runs in parallel to \cite{BZBJ18b} after passing through the equivalence of Proposition~\ref{prop:sk-cat-fact}. We now continue our story in three dimensions by introducing \textit{internal skein modules} of $3$-manifolds with line defects. Throughout this section, $\M$ will be a braided module category over $\A$ with $\A$-progenerator $\unit_\M\in \M$ and 
\[F:= \mathrm{act}_{\unit_\M}:\A\to \M\]
will denote the action functor on $\unit_\M$. 

Let $M: \varnothing \rightarrow \Sigma$ be a morphism in $\Bord_3^{\mathrm{tgl}}$. For simplicity, let us assume that both $M$ and $\Sigma$ are connected\footnote{In the disconnected case, we can repeat the same procedure by removing a disk for each connected component.}. The defect skein module of $M$ (see Definition~\ref{def:skmod} and \eqref{eq:skmod-profunctor}) defines a presheaf
\begin{equation*}
    \sk_{\AM}(M):\sk_\AM(\Sigma)^\mathrm{op} \longrightarrow \sk_\AM(\varnothing) \simeq\Mod_{k}~.
\end{equation*}
Fixing a disk $\mathbb{D} \subset \Sigma$ and pulling back the presheaf along $\Sigma^\ast\hookrightarrow \Sigma$ yields: 
\begin{equation*}
    \sk_{\AM}(M):\sk_\AM(\Sigma^\ast)^\mathrm{op} \longrightarrow \Mod_{k}~.
\end{equation*}
\begin{definition}\label{def:ISM-defect}
The \textit{internal skein module} $\sk^{\mathrm{int}}_\AM(M)$ is the $\sA_{\pSigma}$-module defined as the image of $\sk_{\AM}(M)\in \widehat{\sk}_\AM(\pSigma)$ under the equivalence of Definition/Proposition~\ref{defprop}.
\end{definition}
Similar to the non-defect case, internal skein modules are compatible with gluing according to the following Theorem. Using the TQFT property \eqref{eq:sk-TQFT} the proof follows verbatim as in \cite[Theorem\ 4.1]{GJS}.  
\begin{thm}\label{thm:int-sk-gluing}
Let $M=M_2\circ M_1$ with $M_1: \varnothing \rightarrow \Sigma$ and $M_2: \Sigma\rightarrow \varnothing$ bordisms in $\Bord_3^\mathrm{tgl}$, and suppose all spaces are connected. Then, 
\[\sk_\AM(M) \simeq \Hom_{\Ahat}\left(\unit, \sk^{\mathrm{int}}_\AM(M_2)\otimes_{\sA_{\pSigma}} \sk^{\mathrm{int}}_\AM(M_1)\right)~.\]
\end{thm}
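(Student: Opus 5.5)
The strategy is to follow \cite[Theorem~4.1]{GJS}, with Theorem~\ref{thm:def-sk-TQFT} supplying the gluing and Definition/Proposition~\ref{defprop} supplying the reconstruction of the puncture. The argument has three steps.

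\textbf{Step 1: gluing as a coend.} By Theorem~\ref{thm:def-sk-TQFT}, the defect skein module of $M=M_2\circ M_1$ is the composite of profunctors in $\Bimod$:
\[
\sk_\AM(M)\simeq \int^{B\in \sk_\AM(\Sigma)} \sk_\AM(M_2)(B)\otimes \sk_\AM(M_1)(B).
\]
Equivalently, under the free cocompletion \eqref{eq:free-cococompletion}, this is the pairing of the presheaf $\sk_\AM(M_1)\in \widehat{\sk}_\AM(\Sigma)$ with the copresheaf $\sk_\AM(M_2)$.

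\textbf{Step 2: passing to the punctured surface.} I want to replace $\Sigma$ by $\pSigma$ and then invoke reconstruction. The key topological input is that $\Sigma=\pSigma\cup_{\Ann}\mathbb{D}$, an excision over an unmarked annulus of the type in Proposition~\ref{prop:sk-cat-fact}(3). This gives
\[
\widehat{\sk}_\AM(\Sigma)\simeq \widehat{\sk}_\AM(\pSigma)\boxtimes_{\widehat{\sk}_\A(\Ann)}\Ahat .
\]
Here $\Ahat$ carries the $\widehat{\sk}_\A(\Ann)$-module structure induced by the disk; it is the module whose composite with $\Hom(\unit,-)$ recovers the empty filling.

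Next I apply Definition/Proposition~\ref{defprop}, which identifies $\widehat{\sk}_\AM(\pSigma)$ with $\sA_{\pSigma}$-modules internal to $\Ahat$. Under this identification, $\sk_\AM(M_1)$ corresponds to $\sk^{\mathrm{int}}_\AM(M_1)$ as a left $\sA_{\pSigma}$-module. The outgoing bordism $M_2$, read as a bordism from $\pSigma$ with the gate reversed, corresponds to $\sk^{\mathrm{int}}_\AM(M_2)$ as a right $\sA_{\pSigma}$-module.

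\textbf{Step 3: evaluating the coend.} The coend over $\sk_\AM(\pSigma)$, with the disk filled, becomes the relative tensor product $\sk^{\mathrm{int}}_\AM(M_2)\otimes_{\sA_{\pSigma}}\sk^{\mathrm{int}}_\AM(M_1)$ in $\Ahat$. Capping off the disk then corresponds to applying $\Hom_{\Ahat}(\unit,-)$. This step uses that $\unit$ is compact projective in $\Ahat$, so that $\Hom_{\Ahat}(\unit,-)$ commutes with the colimits defining the relative tensor product.

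\textbf{Main obstacle.} The key point is that the defect points of $\Sigma$ and the tangle $T\subset M$ do not interfere with the argument. The reconstruction in Definition/Proposition~\ref{defprop} needs $\unit_{\pSigma}$ to be an $\A$-progenerator of $\sk_\AM(\pSigma)$. I would deduce this from the hypothesis that $\unit_\M$ is an $\A$-progenerator, together with the decomposition of Proposition~\ref{prop:ISA-decomp}. Concretely, the handle-and-comb picture shows that every $\AM$-labelling is isotopic to one supported near the gate, with defects labelled by $\unit_\M$. Dominance then follows from dominance of $F=\mathrm{act}_{\unit_\M}$ on each defect point, and projectivity from compactness of $\unit_\M$.

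Beyond this point, the proof of \cite[Theorem~4.1]{GJS} uses only the TQFT property and monadicity. Both are available here, from Theorem~\ref{thm:def-sk-TQFT} and Proposition~\ref{prop:braided-module-cat-reconstruction}, so the remainder of the argument carries over verbatim.
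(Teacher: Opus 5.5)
Your proposal is correct and follows the paper's route. The paper's proof consists only of invoking the TQFT property of Theorem~\ref{thm:def-sk-TQFT}, together with the reconstruction of Definition/Proposition~\ref{defprop}, and observing that the proof of \cite[Theorem~4.1]{GJS} then carries over verbatim; your Steps 1--3 unpack exactly that. The progenerator check in your last paragraph is not needed, since the paper builds it into Definition/Proposition~\ref{defprop}; if you do want it directly, note that conservativity of $\mathrm{act}^R_{\unit_{\pSigma}}$ on the free cocompletion requires every object of $\sk_\AM(\pSigma)$ to be a retract, not merely a quotient, of some $x\lact\unit_{\pSigma}$, and your argument goes through with that replacement.
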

Analogously to \cite[Cor.\ 4.2]{GJS}, we may deduce the following tensor product decomposition of defect skein modules. 
\begin{cor}\label{cor:int-sk-gluing}
    If $\Ahat$ has trivial M\"uger centre, then 
    \[\sk_{\AM}(M) \simeq \sk^{\mathrm{int}}_\AM(M_2)\otimes_{\sA_{\pSigma}} \sk^{\mathrm{int}}_\AM(M_1)~.\]
\end{cor}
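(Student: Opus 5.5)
We deduce the corollary from Theorem~\ref{thm:int-sk-gluing}, following \cite[Cor.\ 4.2]{GJS}. The plan is to show that the functor $\Hom_{\Ahat}(\unit,-)$ can be removed once we restrict to the objects that actually occur in the theorem. By Theorem~\ref{thm:int-sk-gluing} we have
\[\sk_\AM(M)\simeq \Hom_{\Ahat}\bigl(\unit, N\bigr),\qquad N:=\sk^{\mathrm{int}}_\AM(M_2)\otimes_{\sA_{\pSigma}} \sk^{\mathrm{int}}_\AM(M_1).\]
It therefore suffices to show that $N$ is a \emph{trivial} object of $\Ahat$, i.e.\ a direct sum (colimit) of copies of $\unit$, or at least lies in the full subcategory generated under colimits by $\unit$. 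Since $\unit$ is compact projective, $\Hom_{\Ahat}(\unit,-)$ identifies this subcategory with $\Mod_k$, and the stated isomorphism follows.

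\textbf{Step 1: $N$ is transparent.} The relative tensor product $N$ arises from the closed-up gluing. Concretely, the $\Ahat$-object underlying $N$ is the internal skein module of the closed manifold $M$ with a gate placed on a small ball, via the puncture of $\Sigma$. Any $\A$-strand entering through the gate can be isotoped around a small sphere enclosing the gate inside $M$. Following \cite{GJS}, this gives an identification of the half-braiding $c_{x,N}\circ c_{N,x}$ with the identity for every $x\in\A$, i.e.\ $N$ lies in the Müger centre of $\Ahat$. We would phrase this algebraically: $N$ is a module over $\sA_{\pSigma}\otimes\sA_{\pSigma}^{\mathrm{op}}$ coinvariants, so the $\Ocal_\A$-action through the quantum moment maps of the two sides agrees. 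Hence the induced field-goal/reflection-equation action of $\Ocal_\A$ on $N$ is via the counit, which is exactly transparency. The defect labelling plays no role here, because the defect links lie away from the gate ball. The only new input beyond the non-defect case is the TQFT property of Theorem~\ref{thm:def-sk-TQFT}, which lets us perform the isotopy in $M\setminus T$.

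\textbf{Step 2: trivial Müger centre.} By hypothesis the Müger centre of $\Ahat$ is trivial, meaning it is generated under colimits by $\unit$ and is equivalent to $\Mod_k$. By Step 1, $N\simeq V\otimes\unit$ for $V=\Hom_{\Ahat}(\unit,N)$. Identifying $N$ with $V$ gives the claimed isomorphism with $\sk_\AM(M)$.

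The main obstacle is Step 1, namely verifying carefully that the relative tensor product over $\sA_{\pSigma}$ coequalizes the two $\Ocal_\A$-actions in a way that forces transparency. I would first try to transport the argument of \cite[Cor.\ 4.2]{GJS} verbatim, checking that the $\sA_{\pSigma}$-bimodule structures on $\sk^{\mathrm{int}}_\AM(M_i)$ are compatible with the moment map $\Ocal_\A\to\sA_{\pSigma}$. That compatibility holds because the puncture is disjoint from the defects.
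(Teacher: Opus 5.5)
Your proposal is correct and follows essentially the paper's route, which simply transports \cite[Cor.\ 4.2]{GJS}: the glued object $N=\sk^{\mathrm{int}}_\AM(M_2)\otimes_{\sA_{\pSigma}}\sk^{\mathrm{int}}_\AM(M_1)$ is the internal skein module of $M$ with a gate on a small sphere away from the defects, hence transparent in $\Ahat$, so trivial M\"uger centre gives $N\cong \Hom_{\Ahat}(\unit,N)\otimes\unit$ and the $\Hom_{\Ahat}(\unit,-)$ in Theorem~\ref{thm:int-sk-gluing} can be dropped. Your algebraic rephrasing via ``$\sA_{\pSigma}\otimes\sA_{\pSigma}^{\mathrm{op}}$ coinvariants'' is garbled but inessential; the isotopy around the gate sphere is the actual argument.
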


\begin{rem}\label{rem:disconnected-gluing}
Theorem~\ref{thm:int-sk-gluing} and Corollary~\ref{cor:int-sk-gluing} can be easily extended to bordisms $M_1:\Sigma_1\rightarrow \Sigma$ and $M_2: \Sigma \rightarrow \Sigma_2$ in $\Bord_3^\mathrm{tgl}$ with no connectedness assumption as in \cite[Prop.\ 3.12 \& Cor.\ 3.13]{JR} using the notion of \textit{internal skein bimodules} and puncturing all connected components of $\Sigma$ at least once. However, for the sake of legibility and for interests of this paper we settle for the connected case where $\Sigma_1 = \Sigma_2=\varnothing$ as presented above. 
\end{rem}
We will now focus on defect skein modules of 3-manifolds with a single defect knot. 
\begin{definition}\label{def:ISM-defect-solid-torus}
    Let $\M$ be a braided module category over $\A$. We write 
    $$\mathcal{L}_\M := \sk^{\mathrm{int}}_{(\A,\M)}(\mathbb{D}_* \times \mathbb{S}^1)$$
    for the internal skein module of the solid torus with defect core, regarded as a $\mathcal{D}_{\A}$-module. 
\end{definition}
Applying Corollary~\ref{cor:int-sk-gluing} to a 3-manifold with a defect knot embedding yields the following useful formula.
\begin{cor}\label{cor:def-skmod}
Let $M$ be a 3-manifold with an embedded defect knot $K\subset M$ and suppose $\Ahat$ has trivial M\"uger centre. Then, \[\sk_\AM(M,K) \simeq \sk^{\mathrm{int}}_\A(M\setminus K)\otimes_{\Dcal_\A} \mathcal{L}_\mathcal{M}~.\]
\end{cor}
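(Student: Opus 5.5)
The plan is to obtain the formula from Corollary~\ref{cor:int-sk-gluing} applied to a specific decomposition of $(M,K)$ as a composite of two bordisms in $\tglBord$. Choose a closed tubular neighbourhood $\nu(K)\cong \mathbb{D}\times \mathbb{S}^1$ of $K$, using the framing of $K$ to fix the product structure, so that $K$ corresponds to the core $\{0\}\times\mathbb{S}^1$. Set $\Sigma:=\partial\nu(K)\cong T^2$, a torus carrying no defect points. Write $M_1:=(\mathbb{D}_*\times\mathbb{S}^1, K):\varnothing\to\Sigma$ for the solid torus with defect core, and $M_2:=M\setminus \mathrm{int}\,\nu(K):\Sigma\to\varnothing$ for the knot complement with empty tangle. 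Then $(M,K)=M_2\circ M_1$ in $\tglBord$, and both pieces, as well as $\Sigma$, are connected, so the standing connectedness assumptions hold.

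Next, identify the algebra over which the tensor product is taken. Since $\Sigma$ has no defect points, $\pSigma$ is the once-punctured torus $\ptorus$. Proposition~\ref{prop:ISA-decomp} with $g=1$, $n=0$, $b=1$ gives $\sA_{\pSigma}=\sA_{1,0}^1=\Dcal_\A$. For the same reason, the $\AM$-skein theory of $\pSigma$ and of $M_2$ coincides with ordinary $\A$-skein theory, since no $\M$-labels can occur. Hence $\sk^{\mathrm{int}}_\AM(M_2)=\sk^{\mathrm{int}}_\A(M\setminus K)$, where $M\setminus K$ is interpreted as the complement of the open tubular neighbourhood. By Definition~\ref{def:ISM-defect-solid-torus}, $\sk^{\mathrm{int}}_\AM(M_1)=\mathcal{L}_\M$.

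Finally, invoke Corollary~\ref{cor:int-sk-gluing}, which applies because $\Ahat$ has trivial M\"uger centre, to conclude
\[
\sk_\AM(M,K)\simeq \sk^{\mathrm{int}}_\A(M\setminus K)\otimes_{\Dcal_\A}\mathcal{L}_\M .
\]

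The main point requiring care is the compatibility of the module structures. The $\Dcal_\A$-action on $\mathcal{L}_\M$ and on the complement module depends on the chosen parametrization of $\Sigma$, namely on the meridian and longitude determined by the framing, and on the position of the puncture. One of the two sides carries the opposite orientation, so it is naturally a right module, and $\Sigma$ must be identified consistently with $\partial\nu(K)$ and with $-\partial(M\setminus \nu(K))$. I expect this bookkeeping to be routine, following the non-defect case in \cite{GJS}. The changes of identification act on $\Dcal_\A$ through automorphisms coming from the mapping class group of $\ptorus$, and these leave the relative tensor product unchanged up to isomorphism.
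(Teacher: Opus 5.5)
Your proposal is correct and matches the paper's argument: the paper obtains this corollary in one line by applying Corollary~\ref{cor:int-sk-gluing} to the splitting of $(M,K)$ along $\partial\nu(K)$ into the knot complement and the solid torus $\mathbb{D}_*\times\mathbb{S}^1$ with defect core. As in your argument, it uses $\sA_{\ptorus}=\Dcal_\A$ and Definition~\ref{def:ISM-defect-solid-torus}, and your remarks on parametrizing the boundary torus consistently on both sides spell out bookkeeping the paper leaves implicit.
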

Therefore, in order to compute the defect skein module $\sk_{\AM}(M,K)$ we need to understand $\mathcal{L}_\M$, the internal skein module of the solid torus with defect core.

\begin{prop} \label{prop:LM-formula}
The $\mathcal{D}_\A$-module $\mathcal{L}_\M$ can be expressed as a coend
\begin{equation*}
    \mathcal{L}_\M \simeq \int^{x \in \im(F)}{x\otimes \iend(\unit_\M)\otimes x^\ast}~
\end{equation*}
over the full image category\footnote{The full image $\im(F)$ has objects $\mathrm{ob}(\im(F)):=\mathrm{ob}(\A)$ and morphism spaces $\Hom_{\im(F)}(x,y):= \Hom_{\M}(F(x),F(y))$. Equivalently, it is the full subcategory of $\M$ with objects $F(\mathrm{ob}(\A))$.} $\im(F)$ of $F: \A \to \M$. 
\end{prop}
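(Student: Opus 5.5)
We plan to compute $\mathcal{L}_\M$ by decomposing the solid torus with defect core and reading off the internal skein module as a presheaf on $\sk_\AM(\ptorus)$, transported to $\Dcal_\A$-modules via Definition/Proposition~\ref{defprop}.

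First we cut the solid torus $\mathbb{D}_\ast\times\mathbb{S}^1$ along a meridian disk $\mathbb{D}_\ast\times\{pt\}$. This presents it as the trace (closure) of the identity bordism $\mathbb{D}_\ast\times[0,1]$, with the boundary annulus $\partial\mathbb{D}_\ast\times[0,1]$ providing the piece of the torus on which the punctured-torus module structure lives. Via the TQFT property of Theorem~\ref{thm:def-sk-TQFT} (in the same spirit as Corollary~\ref{cor:HH-skcat}), the skein module of this closed-up cylinder is a coend of the identity bimodule of $\sk_\AM(\mathbb{D}_\ast)\simeq\M$ (Proposition~\ref{prop:sk-cat-fact}). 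Keeping track of the boundary torus, this shows that the presheaf $\sk_\AM(\mathbb{D}_\ast\times\mathbb{S}^1)$ on $\sk_\A(\ptorus)$ is a coend over objects of $\M$.

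Next we restrict the coend. Since $\unit_\M$ is an $\A$-progenerator, every object of $\M$ is (in the cocompletion) built from objects $F(x)=x\lact\unit_\M$. Hence the coend over $\M$ may be computed over the full subcategory $\im(F)$, by the standard density/cofinality argument for coends over a dense subcategory of compact projective generators. We then identify each summand. An $F(x)$-labelled meridian disk, read through the gate at the puncture and the internal Hom $\ihom(\unit_\M,-)=\mathrm{act}^R_{\unit_\M}$, contributes the object $x\otimes\iend(\unit_\M)\otimes x^\ast$ of $\Ahat$. Here $x$ and $x^\ast$ record the strands entering and exiting through the two sides of the cut, and $\iend(\unit_\M)=\sA_{\mathbb{D}_\ast}$ records the skeins touching the defect near a meridian disk. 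The morphisms of $\im(F)$, namely $\Hom_\M(F(x),F(y))$, act by dinaturality, which is exactly the coend relation.

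Finally we check that the $\Dcal_\A$-action is the expected one. Here $\Dcal_\A\cong\Ocal_\A\totimes\Ocal_\A$, where the meridian copy of $\Ocal_\A$ acts through the quantum moment map $\Ocal_\A\to\iend(\unit_\M)$ and the longitude copy acts by braiding strands past $x$. We expect the main obstacle to be making the reduction from $\M$ to $\im(F)$ rigorous at the level of internal, rather than ordinary, skein modules. To handle it, we first prove the statement after applying $\Hom_{\Ahat}(a,-)$ for each $a\in\A$, which reduces it to a statement about ordinary defect skein modules with boundary labelling $a$. We then use that $\ihom(\unit_\M,-)$ is cocontinuous and conservative to conclude naturally in $a$.
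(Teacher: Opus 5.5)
Your proposal follows essentially the same route as the paper: the paper glues the gate-cylinder bimodule $x\boxtimes m\boxtimes n\mapsto \Hom_\M(x\lact m,n)$ to a cap that is just the identity bimodule, which amounts to your trace. It then simplifies via Yoneda, the adjunction $\Hom_\M(x\lact n,n)\cong\Hom_\A(x,\iend(n))$ and co-Yoneda to $\int^{n\in\M}\iend(n)$, uses progeneration to restrict to $\im(F)$, and applies $\ihom(x\lact\unit_\M,y\lact\unit_\M)\cong y\otimes\iend(\unit_\M)\otimes x^\ast$. These are exactly your steps, with your ``keeping track of the boundary torus'' made explicit.

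Two small remarks. First, the paper only identifies the underlying object of $\Ahat$ pointwise in $a\in\A$. Passing from there to $\Ahat$ is plain Yoneda, and progeneration is needed only for the restriction to $\im(F)$, not for that last step. Second, the paper does not verify the $\Dcal_\A$-action, so your final paragraph on the action goes beyond what the paper proves.
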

\begin{proof}
Since $\sk_\AM$ forms a TQFT, we can compute $\mathcal{L}_\M: \A^\opp \rightarrow \Vect$ by composing the bimodules 
\[
\begin{tikzpicture}[scale=.3,baseline={([yshift=-.5ex]current bounding box.center)}]
    \draw (-3,0) arc(180:0:3 and 3);
    \draw (-1,0) arc(180:0:1 and 1);
    \draw[oriented, red] (-2,0) arc(180:0:2 and 2);
    \draw (-3,0) arc (-180:0:1 and .5);
    \draw[dotted] (-3,0) arc (180:0:1 and .5);
    \draw (1,0) arc (-180:0:1 and .5);
    \draw[dotted] (1,0) arc (180:0:1 and .5);
\end{tikzpicture}:\M\boxtimes \M^\opp \to \Vect~; m\boxtimes n \mapsto \Hom_\M(n,m)\]
and 
\[\begin{tikzpicture}[scale=.3,baseline={([yshift=-.5ex]current bounding box.center)}]
    \draw (-1,0) arc(-180:0:1 and 1);
    \draw[oriented, red] (2,0) arc(0:-180:2 and 2);
    \draw (-3,0) to[out=-90,in=90,looseness=.6] (-1,-3);
    \draw (3,0) to[out=270,in=90,looseness=0.6] (1,-3);
    \draw (-1,-3) arc (-180:0:1 and .5);
    \draw[dotted] (-1,-3) arc (180:0:1 and .5);
    \draw (-3,0) arc (-180:0:1 and .5);
    \draw (-3,0) arc (180:0:1 and .5);
    \draw (1,0) arc (-180:0:1 and .5);
    \draw (1,0) arc (180:0:1 and .5);
\end{tikzpicture}: \A^\opp\boxtimes\M^\opp \boxtimes \M \to \Vect ~; x\boxtimes m\boxtimes n \mapsto \Hom_\M(x\lact m,n)~.\]
Using the TQFT property \eqref{eq:sk-TQFT}, we can compute $\mathcal{L}_{\M}$ as a vector space
\begin{align*}
    \mathcal{L}_\M &= \int^{x\in \A, m,n\in \M}{\Hom_{\M}(m,n)\otimes \Hom_{\M}(x\otimes n, m) \otimes x^\ast}\\ 
    &\simeq \int^{x\in \A,n\in \M}{\Hom_{\M}(x\otimes n, n) \otimes x^\ast}, \text{ integrating out the }m \text{ variable}\\ 
    &\simeq  \int^{x\in \A,n\in \M}{\Hom_{\A}(x,\iend(n))\otimes x^\ast}, \text{ by adjunction}\\ 
    &\simeq \int^{n\in \M}{\iend(n)}, \text{ integrating out the }x\text{ variable} \\ 
    &\overset{(1)}{\simeq} \int^{x \in \im(F)}{\iend(x\lact \unit_\M)}~\overset{(2)}{\simeq} \int^{x \in \im(F)} \, {x\otimes \iend(\unit_\M)\otimes x^\ast}
\end{align*}
where (1) uses the fact that $\unit_\M$ is a progenerator to reduce the coend to the full image category $\im(F)$. In particular, this is the coend of $\underline{\End}(x\lact \unit_\M)$  over $x \in \A$ dinatural in $\M$-morphisms $F(x):= x \lact \unit_\M\to F(y):= y \lact \unit_\M$.  Equation (2) makes use of the canonical isomorphism $\underline{\Hom}(x\lact m, y\lact n) \simeq y \otimes \underline{\Hom}(m,n)\otimes x^\ast$ obtained from the internal Hom adjunction and rigidity. 
\end{proof}

\begin{example} 
    Taking the transparent defect which corresponds to $\M = \A$ we have $F= \id$ and $\iend(\unit_\A) \cong \unit_\A$. Therefore, $\mathcal{L}_\A \cong \Ocal_\A$ with the canonical $\Dcal_\A$-module structure via the longitude. Taking instead $\M = \int_{\mathrm{Ann}}\A$ we retrieve $\mathcal{L}_\A\cong \Dcal_\A$ as the regular $\Dcal_\A$-module.
\end{example}

\section{Skein theory with quantum symmetric pair defects}\label{sec:QSP-def}

In this section, we specialize to our coefficient categories of primary interest: we set
$$\A = \mathrm{Rep}_q(G) \,  \text{(or its equivariantized version $\mathrm{Rep}_q(G)^{\Z_2}$)}$$
and we will consider braided module categories $\M = \M_{\mathbf{c}, \mathbf{s}}$ over it associated to a quantum symmetric pair $(G,G^\theta)$ as in Section~\ref{subsec:QSP}. We remind the reader that we will only work with specializable parameters $\mathbf{c}, \mathbf{s} \in R$. 

Note that any involution $\theta$ can be written as an inner involution composed with an outer involution, \ie $\theta = \mathrm{Ad}_J\circ \phi$ for some element $J$ with $J\phi(J)$ central in $G$, and $\phi\in \mathrm{Out}(G)$ a diagram involution. To uniformize notation, we set
$$\widetilde{G} = G \rtimes \Z_2$$
in the outer case, and we simply set $\widetilde{G} = G$ in the inner case. We write $\widetilde{J} = (J, \phi) \in \widetilde{G}$ in the outer case (and $\widetilde{J} = J$ in the inner case), so that we have a $\widetilde{G}$-equivariant map
\begin{equation}\label{eq:embedding}
    G^\theta \backslash \widetilde{G}  \longrightarrow \widetilde{G}^\theta \backslash \widetilde{G} \simeq G^\theta \backslash G \longhookrightarrow \widetilde{G}
\end{equation}
$$g \longmapsto g^{-1}\widetilde{J} g$$
generalizing the $G$-equivariant closed embedding $G^\theta \backslash G \hookrightarrow G$ by $g \mapsto g^{-1}Jg$ in the inner case. Note that the centralizer of $\widetilde{J}$ in $\widetilde{G}$ may contain $G^\theta$ properly as an index 2 subgroup: indeed, if $\phi(J) = J$, then the centralizer of $\widetilde{J}$ in $\widetilde{G}$ is
$$\widetilde{G}^\theta = \langle(g,1), (g,\phi): g \in G^\theta\rangle$$
containing $G^\theta$ as an index two subgroup. This is the case for the symmetric space $(\mathrm{GL}_n, \mathrm{SO}_n)$ of Type AI, for instance. To uniformize notation, we will set $\widetilde{G}^\theta = G^\theta$ in the case when $\phi(J) \neq J$ as well. We note however that in any case, the morphism \eqref{eq:embedding} is a \textit{finite map}, a closed embedding precomposed with a (trivial) double cover in the outer case.

\subsection{The quantum moment map of quantum symmetric pairs}
\label{subsec:qmm-QSP}

Abstractly, whenever $\M$ is a braided module category over $\A$ generated by some $\mathbf{1}_\M \in \M$, we know by the main theorem of \cite{BZBJ18b} that there is a quantum moment map 
\begin{equation}\label{eq:qmm}
    \mu: \mathcal{O}_{\A} = \Oq(\widetilde{G}) \longrightarrow \iend(\mathbf{1}_{\M})~.
\end{equation}
induced by and equivalent to the braided module structure. In the case of quantum symmetric pairs, we have $\mathbf{1}_{\M} = \C$ is the trivial representation, and we may understand $\mu$ explicitly as acting by $\mathcal{J}$ via a sequence of adjunctions and the Peter--Weyl presentation of $\mathcal{O}_q(\widetilde{G})$:
\begin{equation} \label{eq:qmm-explicit}
    \mu \in \mathrm{Hom}_{\A}(\mathcal{O}_q(\widetilde{G}), \iend_\M(\C)) \simeq \mathrm{Hom}_{\M}(\mathcal{O}_q(\widetilde{G}), \C) \simeq \bigoplus_{i \in I} \, \mathrm{Hom}_{\M}(V_i, V_i) \ni \mathcal{J},
\end{equation}
where $i \in I$ is an index set for irreducible representations of $\widetilde{G}$, and $\mathcal{J}$ is the universal $K$-matrix of the relevant quantum symmetric pair provided by Kolb's Theorem \ref{thm:Kolb-thm}. Indeed, the map $\mu$ may be represented diagrammatically as the left picture below, while the $\mathcal{J}$ matrix is presented in terms of the skein picture on the right:
\begin{equation}
    \begin{tikzpicture}[scale =1,baseline={([yshift=-.5ex]current bounding box.center)}]
    \cyl{1}{1.5}
    \draw (-.5,-.433) .. controls ($(-.5,-.433)+ (0,.2)$) and (1,0) .. (.6,.8) .. controls (0,1.5) and ($(-.8,-.291)+(0,.2)$) .. (-.8,-.291);
    \filldraw (-.5,-.433) circle (1.5pt);
    \filldraw (-.8,-.291) circle (1.5pt);
    \draw[white, line width = 0.2cm] (0,0) -- (0,1);
    \draw[redstrand] (0,0) -- (0,1.5);
\end{tikzpicture} 
\quad= \quad
\begin{tikzpicture}[scale =1,baseline={([yshift=-.5ex]current bounding box.center)}]
    \cyl{1}{1.5}
    \draw[red] (0,0) -- (0,0.8);
    \draw[red] (0,1) -- (0,1.8);
    \draw[draw=red] (-.2,0.8) rectangle ++(0.3,0.2) node [anchor=north west][inner sep=0.75pt]  [font=\tiny, color=red]  {$\mathcal{J}$};
    \draw (-.5,-.433) node [anchor=south west][inner sep=0.75pt]  [font=\tiny]  {$V$} .. controls (-.3,0) and (-.1,0.3) .. (-.1,0.8);
    \draw (-.8,-.291) node [anchor=south west][inner sep=0.75pt]  [font=\tiny]  {$V$} .. controls (-.8,0) and (-.3,2) .. (-.1,1);
\end{tikzpicture}
\end{equation}

\begin{rem}\label{rem:q=1-qmm}
    For specializable parameters, the involution induced by the adjoint action of $\mathcal{J}$ specializes to $\theta$ by \cite[Proposition 10.2]{Kol14}; in particular, $\mathcal{J}|_{q = 1} = \widetilde{J}$.
\end{rem}

Our goal is to show that the map $\mu$ is a finite map (and in fact surjective in the inner case). Towards this, we will first show that $\mu$ deforms the map $\Ocal(\widetilde{G})\to \Ocal(G^\theta \backslash G)$ induced by the finite map \eqref{eq:embedding}. To this end, we first set $q = 1$ and understand the classical limit. 

\begin{lem} \label{lem:qmm-q=1}
    The internal endomorphism ring $\iend(1_{\M})$ specializes at $q = 1$ to the ring of functions on the affine variety $G^\theta \backslash \widetilde{G}$, and the quantum moment map $\mu: \mathcal{O}_q(\widetilde{G}) \to \iend(\mathbf{1}_{\M})$ at $q = 1$ coincides with the pullback on functions along the finite map \eqref{eq:embedding}. 
\end{lem}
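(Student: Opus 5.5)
The plan is to compute both sides at $q=1$ using Theorem~\ref{thm:Kolb-specializability} and Remark~\ref{rem:q=1-qmm}. Everything will be reduced to the classical statement that, for a closed subgroup $H\subset \widetilde{G}$ acting on the trivial representation, the internal endomorphism algebra of the trivial $H$-module in $\Rep(\widetilde G)$ is $\Ocal(H\backslash \widetilde G)$, i.e.\ the $H$-invariant functions for the left translation action.

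\textbf{Step 1: identify $\iend(\unit_\M)$ as an object.} By definition $\iend(\unit_\M)=\mathrm{act}^R\mathrm{act}(\unit)$ represents $x\mapsto \Hom_\M(x\lact\C,\C)$. Peter--Weyl gives
\[
\iend(\C)\simeq\bigoplus_{i} V_i\otimes \Hom_{\M}(V_i,\C),
\]
with $\Hom_\M(V_i,\C)=\Hom_{\mathcal U_{\mathbf c,\mathbf s}\g^\theta}(V_i,\C)$, the coinvariants of the coideal subalgebra. At $q=1$ these become $\Hom_{G^\theta}(V_i,\C)$, using Kolb's Theorem~\ref{thm:Kolb-specializability} (for the outer case, $V_i$ runs over representations of $\widetilde G$). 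Thus
\[
\iend(\C)|_{q=1}\simeq \bigoplus_i V_i\otimes (V_i^\ast)^{G^\theta}\simeq \Ocal(\widetilde G)^{G^\theta}=\Ocal(G^\theta\backslash\widetilde G).
\]
The algebra structure comes from the lax monoidal structure, namely $\Hom(V_i,\C)\otimes\Hom(V_j,\C)\to\Hom(V_i\otimes V_j,\C)$ by tensoring. This is exactly multiplication of matrix coefficients, so the identification is one of algebras. Reductivity of $G^\theta$ (so $G^\theta\backslash\widetilde G$ is affine) and flatness of the integral form are needed here. I would cite Kolb's integral forms for flatness, so that specialization commutes with taking $\mathcal U_{\mathbf c,\mathbf s}\g^\theta$-coinvariants summand by summand.

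\textbf{Step 2: identify $\mu$.} By \eqref{eq:qmm-explicit}, $\mu$ corresponds to the family $(\mathcal J|_{V_i})_i$. Unwinding the adjunction, $\mu$ sends the matrix coefficient $\xi\otimes v\in V_i^\ast\otimes V_i\subset \Ocal_q(\widetilde G)$ to $v\otimes (w\mapsto \xi(\mathcal J w))$ for $w$ in the coinvariant summand, up to the identifications in Step 1. At $q=1$, Remark~\ref{rem:q=1-qmm} gives $\mathcal J=\widetilde J$. The image is then the function $g\mapsto \xi(g^{-1}\widetilde J g\, v)$, and I would check this against the matrix-coefficient conventions. That is precisely the pullback of $\xi\otimes v$ along $g\mapsto g^{-1}\widetilde Jg$ in \eqref{eq:embedding}. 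One must also check that $\Ocal_q(\widetilde G)$ at $q=1$ is $\Ocal(\widetilde G)$ with its commutative multiplication: the reflection equation algebra degenerates to the commutative coordinate ring. This holds since the $R$-matrix specializes to $1$.

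\textbf{Main obstacle.} The delicate step is bookkeeping in Step 2: making the adjunction in \eqref{eq:qmm-explicit} explicit, with the correct placement of duals and left versus right cosets, so that $g\mapsto g^{-1}\widetilde Jg$ emerges rather than $g\widetilde Jg^{-1}$. In the outer case one must also track the $\Z_2$-equivariantization, where $\widetilde J=(J,\phi)$ acts on representations of $\widetilde G$. I would first do the rank-one example $(\SL_2,T)$ explicitly to fix conventions, then argue generally via Peter--Weyl.
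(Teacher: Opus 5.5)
Your proposal is correct and follows essentially the paper's argument. Your Peter--Weyl description $\bigoplus_i V_i\otimes(V_i^\ast)^{G^\theta}$ is the isotypic decomposition of the paper's $\mathrm{Ind}_{G^\theta}^{\widetilde G}(\C)\simeq\Ocal(G^\theta\backslash\widetilde G)$, and carrying $\mathcal J|_{q=1}=\widetilde J$ through \eqref{eq:qmm-explicit} and Frobenius reciprocity is how the paper obtains $f\mapsto\bigl(g\mapsto f(g^{-1}\widetilde J g)\bigr)$; your remarks on the algebra structure and on flatness under specialization spell out points the paper leaves implicit.

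One small correction in Step~2: the intermediate formula $\xi\otimes v\mapsto v\otimes(\xi\circ\mathcal J)$ is not right as written, since $\xi\circ\mathcal J$ is invariant only when $\xi$ is, and the image of $V_i^\ast\otimes V_i$ spreads over several isotypic summands. Instead, apply Frobenius reciprocity directly to the $G^\theta$-invariant functional $\xi\otimes v\mapsto\xi(\widetilde J v)$ on $V_i^\ast\otimes V_i$ with its adjoint action, as the paper does; this gives your final formula $g\mapsto\xi(g^{-1}\widetilde J g\,v)$ immediately.
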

\begin{proof}
    At $q = 1$, the action functor $\mathrm{act}_{\C}$ is the restriction functor $\mathrm{Res}_{G^\theta}^{\widetilde{G}}$, while its right adjoint $\mathrm{act}_\C^R$ is the induction functor $\mathrm{Ind}_{G^\theta}^{\widetilde{G}}$. Thus, 
    $$\iend_{\M}(\C) = \mathrm{Ind}_{G^\theta}^{\widetilde{G}}(\C) \simeq \mathcal{O}(G^\theta \backslash \widetilde{G})$$
    is the target of the quantum moment map at $q = 1$.

    To identify $\mu$, we must carry the element $\mathcal{J}$ through the chain of identifications in \eqref{eq:qmm-explicit}, and we will go step by step from right to left. First, we view the $i$th component $\mathcal{J}_i \in \mathrm{Hom}_{G^\theta}(V_i, V_i)$ as an element of $\mathrm{Hom}_{G^\theta}(V_i^* \otimes V_i, \C)$ by the map
    \begin{equation} \label{eq:matrix-coeff-J}
        \phi \otimes v \longmapsto \phi(\widetilde{J}_iv),
    \end{equation}
    essentially extracting a matrix coefficient of the $i$th component of $\widetilde{J}$. Regarding now $\widetilde{J} = \oplus_i \widetilde{J}_i \in \mathrm{Hom}_{G^\theta}(\oplus_i \, V_i^* \otimes V_i, \C)$ as a $G^\theta$-invariant distribution on $\widetilde{G}$ (\ie an element of $\mathrm{Hom}_{G^\theta}(\mathcal{O}(\widetilde{G}), \C)$) via the Peter--Weyl embedding, we see that the element corresponding to $\widetilde{J}$ is simply $f \mapsto f(\widetilde{J})$, since it is so for matrix coefficients, which form a basis. Finally, we apply Frobenius reciprocity to obtain a corresponding element of $\mathrm{Hom}_G(\mathcal{O}(\widetilde{G}), \mathcal{O}(G^\theta \backslash \widetilde{G}))$, which is given by the formula $f \mapsto (g \mapsto f(g^{-1}\widetilde{J} g))$, as we wanted to show.
\end{proof}

We deduce readily that the finiteness at $q = 1$ is preserved for general $q$.
\begin{lem}\label{lem:qmm-surj}
The quantum moment map 
$$\mu:\Oq(\widetilde{G}) \longrightarrow \iend_{\M}(\C)$$
is finite, and in fact surjective when $\widetilde{G}^\theta = G^\theta$.
\end{lem}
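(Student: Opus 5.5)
The plan is to deduce the statement from its classical limit, Lemma~\ref{lem:qmm-q=1}, by a graded Nakayama-type argument carried out one isotypic component at a time over the local ring $R_{(q-1)}$.

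First I will set up the integral forms. Both $\Oq(\widetilde{G})$ and $\iend_\M(\C)$ are objects of $\Ahat$, and $\mu$ is a morphism of algebras in $\Ahat$. In particular, $\mu$ respects the decomposition into isotypic components indexed by irreducibles $V_\lambda$ of $\widetilde{G}$. By Peter--Weyl, $\Oq(\widetilde{G}) \simeq \bigoplus_\lambda V_\lambda^*\otimes V_\lambda$ has finite-rank free components over $R_{(q-1)}$. For the target, $\iend_\M(\C)\simeq \bigoplus_\lambda V_\lambda\otimes \Hom_{\M}(V_\lambda,\C)$ by adjunction. Each multiplicity space $\Hom_{\mathcal{U}_{\mathbf{c},\mathbf{s}}\g^\theta}(V_\lambda,\C)$ should be a finitely generated module over $R_{(q-1)}$. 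By Kolb's specialization theorem (Theorem~\ref{thm:Kolb-specializability}) its rank agrees with $\dim\Hom_{G^\theta}(V_\lambda,\C)$, which gives flatness and the base change identity $\iend_\M(\C)|_{q=1}\simeq\mathcal{O}(G^\theta\backslash\widetilde{G})$, as already used in Lemma~\ref{lem:qmm-q=1}. I expect this step to be the main obstacle. One must check that invariants commute with specialization, i.e.\ that there is no jump in the dimension of $\mathcal{U}_{\mathbf{c},\mathbf{s}}\g^\theta$-invariants. I would try to get this from semicontinuity combined with the equality of generic and special dimensions that follows from Letzter's and Kolb's results on spherical functions and quantum symmetric spaces.

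Next, the surjective case $\widetilde{G}^\theta=G^\theta$. Here \eqref{eq:embedding} is a closed embedding, so by Lemma~\ref{lem:qmm-q=1} the specialization $\mu|_{q=1}$ is surjective. For each $\lambda$, the component $\mu_\lambda$ is a map of finite free $R_{(q-1)}$-modules whose reduction modulo the maximal ideal $(q-1)$ is surjective. Nakayama's lemma then makes $\mu_\lambda$ surjective over $R_{(q-1)}$, so $\mu$ is surjective there. Extending scalars to $\C(q^{1/d})$ gives surjectivity for generic $q$.

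Finally, the general finite case, where \eqref{eq:embedding} is a closed embedding precomposed with a double cover. At $q=1$, $\mathcal{O}(G^\theta\backslash\widetilde{G})$ is then generated as an $\mathcal{O}(\widetilde{G})$-module by finitely many elements $f_1,\dots,f_r$, which I take to be homogeneous, i.e.\ lying in finitely many isotypic components. I lift them to $\tilde f_j\in\iend_\M(\C)$ and consider the $\Oq(\widetilde{G})$-module map $\bigoplus_j \Oq(\widetilde{G})\to\iend_\M(\C)$ given by $(a_j)\mapsto\sum_j\mu(a_j)\tilde f_j$. This map is $\mathcal{U}_q\g$-equivariant, and $\Ahat$-linear up to the braiding conventions of the module structure, so it still respects isotypic components. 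Its reduction at $q=1$ is surjective, and the same componentwise Nakayama argument shows it is surjective. Hence $\iend_\M(\C)$ is finitely generated as an $\Oq(\widetilde{G})$-module via $\mu$, which is the finiteness of $\mu$.
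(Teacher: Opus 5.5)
Your argument is essentially the paper's: it also splits the target into isotypic components $W_i$ and applies Nakayama's lemma over $R_{(q-1)}$ to the cokernels of $\mathrm{pr}_i\circ\mu$, using Lemma~\ref{lem:qmm-q=1} (and, tacitly, the base-change identity you flag) to see that these cokernels vanish at $q=1$; the paper writes this out only for the inner, surjective case. Your extension to the merely finite case works after one fix. For a non-invariant $\tilde f_j$ the map $(a_j)\mapsto\sum_j\mu(a_j)\tilde f_j$ is not equivariant, so use instead the morphism $\bigoplus_j \Oq(\widetilde{G})\otimes W_j\to\iend_{\M}(\C)$ in $\Ahat$, given by $a\otimes w\mapsto\mu(a)w$, where $W_j$ is the finite-rank sum of the isotypic components containing $\tilde f_j$. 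Its image is then a subobject of $\iend_{\M}(\C)$, hence a direct sum of isotypic pieces, so your componentwise Nakayama argument applies.
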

\begin{proof}
   We shall assume for simplicity that we are in the inner case, as the outer case requires minimal modifications. Since $\mu$ is a morphism of $G$-representations, we may decompose the target into irreducible representations $\oplus_i \, W_i$, and for each $i$, we write $M_i$ for the cokernel of the composition
   $$\mathcal{O}_q(G) \overset{\mu}{\longrightarrow} \oplus_i \, W_i \overset{\mathrm{pr}_i}{\longrightarrow} W_i.$$
   Note that $M_i$ is a finitely generated $\C[q]_{(q-1)}$-module as it is a quotient of the finite rank module $W_i$. Since $M_i \otimes_{q \to 1} \C = 0$ by the preceding Lemma \ref{lem:qmm-q=1}, by Nakayama's lemma we may conclude that $M_i = 0$. 
\end{proof}
In particular, $\iend_{\M}(\C)$ is a finitely generated $\Oq(\widetilde{G})$-module and one computes readily that its GK-dimension is
\begin{equation}\label{eq:GKdim-G/K}
    \operatorname{GKdim}(\iend_{\M}(\C)) = \dim(G)-\dim(G^\theta)
\end{equation} 
at generic $q$.  

\begin{rem}
    The internal endomorphism ring $\iend_{\M}(\C)$ has appeared in Noumi's work on quantizations of symmetric spaces \cite{Noumi}; there, a $q$-deformation of $\mathcal{O}(G^\theta \backslash G)$ was obtained for quantum symmetric pairs of Type AI and AII.
\end{rem}

\subsection{Finiteness and holonomicity of defect skein modules}
\label{subsec:holonomic-def-skmod}

Recall that
$$\mathcal{L}_\M := \sk^{\mathrm{int}}_{(\A,\M)}(\mathbb{D}_* \times \mathbb{S}^1)$$
inherits a $\mathcal{D}_\A$-module structure by regarding 2-torus as the boundary of $\mathbb{D}_* \times \mathbb{S}^1$. Since $\mathcal{D}_\A \simeq \DqG$ in this case is the deformation quantization of the symplectic variety $G \times G$, it makes sense to consider holonomicity of modules over it. 

For completeness, let us recall briefly the theory of deformation quantization modules. Let $A$ be a smooth Poisson algebra over $k = \C(q^{1/d})$ and let $\mathbf{A}$ be an associative flat (\ie torsion free) deformation of $A$ over $R = \C[q^{\pm1/d}]_{(q-1)}$, so that the $(q-1)$-linear term of the commutator bracket on $\mathbf{A}$ agrees with the Poisson bracket on $A$. 

\begin{definition}\label{def:holonomic-brmod}
We write $\mathbf{A}_k = \mathbf{A} \otimes_{R} k$ and consider a module $\mathbf{N}_k$ over $\mathbf{A}_k$. We say that $\mathbf{N}_k$ is a \textit{holonomic module} if it is finitely generated over $\mathbf{A}_k$, and there exists a flat $R$-lattice $\mathbf{A} \acts \mathbf{N}$ such that the support of the coherent sheaf defined by $N = \mathbf{N} \otimes_{R} \C$ on $\spec A$ is a Lagrangian submanifold.
\end{definition}

We are now ready to state and prove one of our central results.
\begin{thm}\label{thm:QSP-holonomic}
    Let $\mathbf{c}, \mathbf{s}$ be specializable parameters for the braided module category $\mathcal{M} = \mathrm{Rep}_{\mathbf{c}, \mathbf{s}}G^\theta$ over $\mathcal{A} = \mathrm{Rep}_q(G)^{\Z_2}$. Then the defect skein module $\mathcal{L}_{\M}$ a holonomic $\Dq(\widetilde{G})$-module. 
\end{thm}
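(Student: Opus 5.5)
Our plan is to reduce holonomicity of $\mathcal{L}_\M$ to a classical computation at $q=1$, using the coend formula of Proposition~\ref{prop:LM-formula} together with the finiteness of the quantum moment map (Lemma~\ref{lem:qmm-surj}). First we make the $\Dq(\widetilde G)$-module structure explicit. In the transparent case $\mathcal{L}_\A\cong\Ocal_\A$ is the quotient of $\Dcal_\A$ by the left ideal generated by the image of the quantum moment map of the meridian, i.e.\ $\Dcal_\A\otimes_{\Ocal_\A}\unit$ where $\Ocal_\A$ acts on $\unit$ via the counit. Analogously, we expect to identify
\[
\mathcal{L}_\M\;\cong\;\Dcal_\A\otimes_{\Ocal_\A}\iend(\unit_\M),
\]
where $\Ocal_\A\to\Dcal_\A$ is the meridian (the loop bounding the defect disk) and $\Ocal_\A\to\iend(\unit_\M)$ is the quantum moment map $\mu$. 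To prove this, we compare generators and relations: the coend $\int^{x\in\im(F)}x\otimes\iend(\unit_\M)\otimes x^*$ is a quotient of $\int^{x\in\A}x\otimes\iend(\unit_\M)\otimes x^*\cong\Ocal_\A\totimes\iend(\unit_\M)$ (longitude factor times defect factor), by the extra dinaturality relations for $\M$-morphisms $F(x)\to F(y)$. These are generated by the braiding $e$, i.e.\ by the $\mathcal{J}$/$\mathcal{K}$-coupons, which should translate into the statement that the meridian generators of $\Dcal_\A$ act through $\mu$. Finite generation over $\Dcal_\A$ then follows immediately from Lemma~\ref{lem:qmm-surj}, since $\iend(\unit_\M)$ is a finitely generated $\Oq(\widetilde G)$-module.

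Next we construct the lattice. Over $R=\C[q^{\pm1/d}]_{(q-1)}$ we use the integral forms of $\Dq(\widetilde G)$, of $\Oq(\widetilde G)$ and of $\iend(\C)$ coming from the specializable parameters (Theorem~\ref{thm:Kolb-specializability}). We define $\mathbf{N}$ to be the image of $\mathbf{D}\otimes_{\mathbf{O}}\mathbf{E}$ in $\mathcal{L}_\M$, where $\mathbf{E}=\iend(\C)$ is the integral form; this image is torsion free and hence flat. At $q=1$, Lemma~\ref{lem:qmm-q=1} identifies $\mu$ with pullback along $\widetilde G/G^\theta\ni g\mapsto g^{-1}\widetilde J g$. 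Using $\Dcal$ at $q=1$ as functions on $\widetilde G\times\widetilde G$ (longitude $a$, meridian $b$), the module $N=\mathbf{N}\otimes\C$ is then a quotient of $\Ocal(\widetilde G)\otimes_{\Ocal(\widetilde G)}\Ocal(G^\theta\backslash\widetilde G)$, with the meridian acting via the class map. Hence its support is contained in the image of
\[
\{(a,b)\;:\;b\in \text{conjugacy class of }\widetilde J\}.
\]
Finally, the defect relations, i.e.\ the equivariance of the coend under the stabilizer of $\widetilde J$, further impose $a\in Z(b)$; the centralizer of $g^{-1}\widetilde Jg$ is conjugate to $\widetilde G^\theta$.

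So the support lies in $\Lambda=\{(a,b): b\in \widetilde G\cdot\widetilde J,\ a\in Z(b)\}$. This has dimension $(\dim G-\dim G^\theta)+\dim G^\theta=\dim G$, half of $\dim(G\times G)$. It is the quasi-Hamiltonian conormal-type variety, i.e.\ the 1-shifted Lagrangian $X_1/G\to G/G$ transported to the torus character variety. Isotropy follows from the fact that it is the image of the Lagrangian correspondence attached to $X_1=G^\theta\backslash G\times^{G^\theta}G$. A support contained in a Lagrangian of half dimension and finitely generated gives holonomicity, after noting by Gabber involutivity that the support has every component of dimension at least half.

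The main obstacle is the first step: rigorously identifying the $\Dcal_\A$-action on the coend, and in particular showing that the dinaturality over $\im(F)$ yields exactly the relation that the longitude commutes with the meridian-through-$\mu$ (the classical $a\in Z(b)$), and not something weaker. We would attempt this via Proposition~\ref{prop:braided-module-cat-reconstruction}, writing $\M$ as $\iend(\unit_\M)$-modules in $\Ahat$ and computing $\mathcal{L}_\M$ as the braided Hochschild homology/trace. Specializing the resulting presentation at $q=1$ should then give the required support containment.
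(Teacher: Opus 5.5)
Your first step is false, and the error carries through the rest of the argument. $\Dcal_\A\cong\Ocal_\A\totimes\Ocal_\A$ is free over its meridian copy of $\Ocal_\A$. Hence, as an object, $\Dcal_\A\otimes_{\Ocal_\A}\iend(\unit_\M)$ is just $\Ocal_\A\totimes\iend(\unit_\M)\cong\int^{x\in\A}x\otimes\iend(\unit_\M)\otimes x^*$. This is the coend over $\A$, in which the meridian already acts through $\mu$, and by your own description $\mathcal{L}_\M$ is a proper quotient of it.

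At $q=1$ this object is functions on $\widetilde G\times(G^\theta\backslash\widetilde G)$, supported on $\widetilde G\times C_{\widetilde J}$, where $C_{\widetilde J}$ is the conjugacy class of $\widetilde J$. That support has dimension $2\dim G-\dim G^\theta>\dim G$, already $5>3$ for $(\SL_2,T)$. It also contains non-commuting pairs, although the complement of the defect core has abelian fundamental group.

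What does hold is a surjection $\Dcal_\A\otimes_{\Ocal_\A}\iend(\unit_\M)\twoheadrightarrow\mathcal{L}_\M$: the canonical map from the coend over $\A$ to the coend over $\im(F)$, which has the same objects but more morphisms. This gives finite generation but no dimension bound. The extra dinaturality relations for $\M$-morphisms $F(x)\to F(y)$ do not make the meridian act through $\mu$; they constrain the \emph{longitude}. So the whole content of the theorem is the claim you state at the end of your support computation and then flag as the main obstacle: that these relations force $a$ into the conjugate of $G^\theta$ centralizing $b$. As written, that claim is unproved, so holonomicity is not established.

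The paper settles exactly this point by computing the coend of Proposition~\ref{prop:LM-formula} directly in the classical limit, with no presentation of the $\Dcal_\A$-action. At $q=1$, $\im(F)$ is the full subcategory of $\Rep(G^\theta)$ on restrictions of $\widetilde G$-representations, and these generate $\Rep(G^\theta)$ under summands. So $\int^{x\in\im(F)}x\otimes x^*$ is the Peter--Weyl coend of $G^\theta$, and the longitude contributes only $\Ocal(G^\theta)$. Combined with Lemma~\ref{lem:qmm-q=1} for $\iend(\unit_\M)$, this gives a module finite over $\Ocal(\widetilde G\times\widetilde G)$ of dimension $\dim G^\theta+\dim(G^\theta\backslash G)=\dim G$. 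Nakayama, as in Lemma~\ref{lem:qmm-surj}, then lifts finite generation to generic $q$.

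Done equivariantly, the classical coend is $\int^{n\in\Rep G^\theta}\mathrm{Ind}_{G^\theta}^{\widetilde G}(n\otimes n^*)\cong\mathrm{Ind}_{G^\theta}^{\widetilde G}\Ocal(G^\theta)$. Its image under $[g,h]\mapsto(g^{-1}hg,\,g^{-1}\widetilde Jg)$ is (a union of components of) your $\Lambda$, so your target variety is the right one.

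This also points to a better lattice: the integral form of the coend itself. Its reduction at $q=1$ is the classical coend, by right exactness of base change together with Theorem~\ref{thm:Kolb-specializability}. With your image of $\mathbf D\otimes_{\mathbf O}\mathbf E$, you would still have to show that the specialized kernel cuts out $\Lambda$, which is the same missing computation.
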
 
\begin{proof}
We proceed by analyzing the limit when $q \to 1$. Recall the description of $\mathcal{L}_{\M}$ from Proposition \ref{prop:LM-formula}: in the commutative setting, we may compute
    \begin{equation*}
        \int^{x \in \mathrm{im}(F)}x \otimes \underline{\mathrm{End}}(1_\M) \otimes x^* \simeq \left(\int^{x \in \mathrm{im}(F)}x \otimes x^*\right) \otimes \underline{\mathrm{End}}(1_\M) \simeq \mathcal{O}(G^\theta \times G^\theta \backslash \widetilde{G}).
    \end{equation*}
As a $\mathcal{D}_{\A} \otimes_{q \to 1} \C \simeq \mathcal{O}(\widetilde{G} \times \widetilde{G})$-module, is induced by the finite map $G^\theta \times G^\theta\backslash \widetilde{G} \to G \times G$ having the inclusion in the first factor and the map \eqref{eq:embedding} in the second. In particular, $\mathcal{L}_{\M}$ is a finitely generated $\mathcal{D}_{\A}$-module at $q \to 1$ and by a similar application of Nakayama's Lemma as in Lemma \ref{lem:qmm-surj} we may conclude that $\mathcal{L}_{\M}$ is finitely generated over $\mathcal{D}_{\A}$. The GK-dimension of this $\mathcal{L}_{\M}$ is readily computed as 
$$\mathrm{dim}(G^\theta) + \mathrm{dim}(G^\theta \backslash G) = \mathrm{dim}(G),$$
which is what we wanted to show. 
\end{proof}

Besides the class of holonomic modules proposed by Definition \ref{def:holonomic-brmod}, which may be understood as having algebraic origin, an important class of holonomic $\mathcal{D}_\A$-modules of geometric origin was constructed by recent work of the second and third authors \cite[Theorem 1.2]{JR}, which we reproduce in the specific case most relevant to our current discussion. 
\begin{thm}[Jordan--Romaidis] \label{thm:Jordan-Romaidis}
    Let $G = \Gm, \mathrm{SL}_2$, or $\mathrm{GL}_2$. Suppose $q$ is generic, and let $M$ be a 3-manifold with boundary torus. Then the internal skein module $\sk_G^{\mathrm{int}}(M)$ is a holonomic module over $\DqG$. Moreover, if $\mathcal{L}$ is another holonomic $\DqG$-module, then \[\sk_G^{\mathrm{int}}(M)\otimes_{\DqG} \mathcal{L}\] is finite dimensional. 
\end{thm}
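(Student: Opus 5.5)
The statement is quoted from \cite{JR}, so the plan is to reconstruct the argument there in two steps, each matching one sentence of the theorem. Throughout we work with $\DqG$ as the flat $R=\C[q^{\pm1/d}]_{(q-1)}$-deformation of $\Ocal(G\times G)$, with the symplectic structure on the character variety $\mathrm{Ch}_G(T^2)$. For the groups $\Gm,\SL_2,\GL_2$ the generic locus of $G\times G$ maps to $\mathrm{Ch}_G(T^2)$.

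\emph{Step 1: finite generation and an $R$-lattice.}
Choose a handle decomposition of $M$ relative to its boundary: $M$ is obtained from $T^2\times[0,1]$ by attaching finitely many $1$-handles, $2$-handles and a $3$-handle. By the gluing formalism (Theorem~\ref{thm:int-sk-gluing} without defects), attaching handles has two effects.
\begin{itemize}
\item It presents $\sk^{\mathrm{int}}_G(M)$ as a quotient of the internal skein algebra of a higher-genus surface.
\item Each $2$-handle imposes the relations of its attaching curve (quantum Hamiltonian reduction along the compressing disc).
\end{itemize}
Pushing this down to the torus gate gives a finitely generated $\DqG$-module. Running the same construction over $R$ rather than $k$ gives a natural $R$-lattice $\mathbf{N}$ with $\mathbf{N}\otimes_R k=\sk^{\mathrm{int}}_G(M)$; to make it flat we quotient by its $(q-1)$-torsion.

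\emph{Step 2: the support of the classical limit is Lagrangian.}
At $q=1$ the skein module is the (possibly derived-in-disguise) ring of functions on $\mathrm{Ch}_G(M)$, so the classical limit $N=\mathbf{N}\otimes_R\C$ is supported on the image of the restriction map
\[ r\colon \mathrm{Ch}_G(M)\longrightarrow \mathrm{Ch}_G(T^2). \]
The image of $r$ is isotropic for the Goldman form. This is the standard half-lives-half-dies argument: the pairing on $H^1(T^2;\mathfrak g_\rho)$ vanishes on the image of $H^1(M;\mathfrak g_\rho)$ by Stokes/Poincaré--Lefschetz duality. Hence $\dim \operatorname{supp} N\le \tfrac12\dim$. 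Conversely, Gabber's involutivity theorem, applied to the good filtration induced by the $(q-1)$-adic lattice, forces every component of $\operatorname{supp}N$ to be coisotropic, hence of dimension at least half. Together these show $\operatorname{supp}N$ is Lagrangian, so $\sk^{\mathrm{int}}_G(M)$ is holonomic in the sense of Definition~\ref{def:holonomic-brmod}.

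I expect the main obstacle to be the lattice step: showing that the classical limit of the torsion-free lattice is really supported on $\operatorname{im}(r)$, and not on something larger coming from $(q-1)$-torsion killed in the quotient. For $G$ of rank at most one I would handle this by working with explicit generators of $\DqG$ (quantum tori and their $\Z_2$-invariants), checking the support component by component, and using the A-polynomial-type equations of $M$ to pin it down.

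\emph{Step 3: finite dimensionality of the tensor product.}
Use the antipode/anti-involution of $\DqG$ to convert $\sk^{\mathrm{int}}_G(M)$ into a right module, so that
\[ \sk^{\mathrm{int}}_G(M)\otimes_{\DqG}\mathcal{L}\;\simeq\; \mathrm{RHom}_{\DqG}\big(\mathbb{D}\sk^{\mathrm{int}}_G(M),\mathcal{L}\big)[\text{shift}], \]
taking $H^0$. Here $\mathbb{D}$ is the duality functor, which preserves holonomic modules; this is the DQ-analogue of the classical $\mathcal D$-module duality. Then invoke the Kashiwara--Schapira finiteness theorem: for holonomic DQ-modules, $\mathrm{RHom}$ has finite-dimensional cohomology over $k$ after inverting $(q-1)$. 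This uses properness of the intersection of the two Lagrangian supports over the relevant compactification, or alternatively algebraic holonomicity on the affine variety $G\times G$.

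The remaining technical point is to pass from the formal (completed) setting of Kashiwara--Schapira to the algebraic one over $\C(q^{1/d})$. I would handle this by working with the $R$-lattices and the faithful flatness of completion, plus a Nakayama argument as in Lemma~\ref{lem:qmm-surj}.
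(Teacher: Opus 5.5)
The paper does not prove this statement; it quotes it from Jordan--Romaidis. The authors add that general $G$ should need only ``routine but tedious algebraic input'', which indicates that the cited argument relies on explicit algebraic features of $\DqG$ for $\Gm,\SL_2,\GL_2$. Your outline never uses the rank restriction in an essential way: your low-rank remark concerns only the support step, and $\GL_2$ has rank two. So if the outline worked it would settle every reductive $G$, which the paper treats as open. It does not work, because its two load-bearing steps are asserted rather than proved.

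\emph{First gap: finite generation.} Holonomicity includes finite generation over $\DqG$, and Gabber's theorem in Step~2 needs a finitely generated lattice with a good filtration. Yet Step~1 contains no argument. After attaching the $1$-handles you have the internal skein module of a compression body, which is far from finitely generated over the torus algebra, and nothing explains why the $2$-handle relations cut it down. Classically they do not. $\mathcal{O}(\operatorname{Hom}(\pi_1M,G))$ is not finitely generated over $\mathcal{O}(G\times G)$ whenever restriction to $\partial M$ has positive-dimensional fibres. For example, for $M=X\#(\mathbb{S}^1\times\mathbb{S}^2)$ with $X$ a knot complement, it equals $\mathcal{O}(\operatorname{Hom}(\pi_1X,G))\otimes\mathcal{O}(G)$. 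At generic $q$ such contributions can only disappear through $(q-1)$-torsion, as happens for $\mathbb{S}^1\times\mathbb{S}^2$, whose generic skein module is much smaller than its classical limit. So the torsion-free lattice has to be analysed, not merely defined.

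The obstacle you flag is actually the easy direction. $\mathbf{N}/(q-1)\mathbf{N}$ is a quotient of the $q=1$ internal skein module, so its support automatically lies in the closure of the image of restriction. Note also that this support lives in $G\times G$, the spectrum of the classical limit of $\DqG$, not in $\mathrm{Ch}_G(T^2)$: non-commuting pairs give no local system on $T^2$. The dimension count must therefore be done for the conjugation-stable locus of commuting pairs lying over an isotropic subvariety of $\mathrm{Ch}_G(T^2)$. Moreover, Gabber's coisotropy bounds dimension from below only where the Poisson bracket on $G\times G$ is nondegenerate.

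\emph{Second gap: finite dimensionality.} Kashiwara--Schapira's finiteness theorem for $\mathrm{RHom}$ between holonomic DQ-modules requires the supports to meet in a compact set, and this fails here.
\begin{itemize}
\item When both modules are objects of $\Ahat$ (as for internal skein modules and $\mathcal{L}_{\M}$), their $q=1$ supports are stable under simultaneous conjugation on the affine variety $G\times G$. For $\SL_2$ and $\GL_2$ the intersection is then a union of $G$-orbits, and it is non-compact as soon as it contains a non-central pair.
\item Even for $\Gm$ compactness fails. Gluing two solid tori along the same meridian gives two supports that both equal the non-compact curve where the meridian holonomy is trivial, although the resulting skein module of $\mathbb{S}^1\times\mathbb{S}^2$ is one-dimensional.
\end{itemize}
You name no compactification over which $\DqG$ and the modules extend. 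Without compactness, Kashiwara--Schapira only give a perverse sheaf on the analytic intersection. Extracting finite-dimensional global sections from it, and identifying them with your algebraic tensor product, is precisely the missing step.

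The formal-versus-algebraic passage you single out is harmless. Passing from $\C(q^{1/d})$ to the Laurent series field in $q^{1/d}-1$ is a field extension, so dimensions are preserved, but it supplies no finiteness. What is needed is a finiteness mechanism intrinsic to $\DqG$ over a non-compact base, for instance a Bernstein-type inequality and filtration in the spirit of holonomic modules over Weyl algebras or quantum tori. This is the kind of explicit algebraic input that the hypothesis $G=\Gm,\SL_2,\GL_2$ reflects.
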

There is a more general version of the above Theorem for larger genus and derived tensor products, for which we refer to \cite{JR}. It is fully expected that the preceding theorem should hold for any reductive group $G$, with routine but tedious algebraic input. In the meantime, we will give a name for the conclusion of the holonomicity theorem of Jordan--Romaidis: given a reductive group $G$ and a $\langle\sigma\rangle$-extension $\widetilde{G} = G \rtimes \langle \sigma\rangle$, we say that \textit{the $\Dq(\widetilde{G})$-modules of geometric origin are holonomic} if the conclusion of Theorem \ref{thm:Jordan-Romaidis} holds. That is, for every 3-manifold $M$ with boundary a 2-torus, the tensor product $\sk_{\widetilde{G}}^{\mathrm{int}}(M) \otimes_{\Dq(\widetilde{G})} \mathcal{L}$ 
is finite dimensional for $\mathcal{L}$ a holonomic module over $\Dq(\widetilde{G})$. 
In fact, it is straightforward to show that if the $\DqG$-modules of geometric origin are holonomic, then so are the $\mathcal{D}_q(\widetilde{G})$-modules of geometric origin. 

\begin{cor} \label{cor:finiteness-defect-skmod}
    Let $(G,G^\theta)$ be an inner (resp. outer) symmetric pair where $G$ is reductive and let $\widetilde{G} = G\rtimes \langle \sigma \rangle$ denote the associated extension. Suppose that the $\Dq(\widetilde{G})$-modules of geometric origin are holonomic. Then for any closed oriented 3-manifold $M$ with line defect $K \subset M$, the defect skein module
    $$\sk_{(G,G^\theta)}(M,K)$$
    is finite dimensional. 
\end{cor}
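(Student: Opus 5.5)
The plan is to combine the gluing formula with the holonomicity of the defect solid torus. Let $\nu(K)$ be a tubular neighborhood of $K$. Then $M = (M\setminus \nu(K)) \cup_{T^2} \nu(K)$, where $\nu(K)\cong \mathbb{D}_*\times \mathbb{S}^1$ has defect core $K$. (If $K$ has several components, do this for each component, using the multi-gate version of Remark~\ref{rem:disconnected-gluing}; for clarity I describe the single-knot case.) We work at generic $q$ over $k=\C(q^{1/d})$, with $\A=\Rep_q(\widetilde{G})$.

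The first step is to apply Corollary~\ref{cor:def-skmod}. This requires checking that $\Ahat$ has trivial M\"uger centre. For $\Rep_q(G)$ at generic $q$ this is standard and is used in \cite{GJS}. For the $\Z_2$-equivariantization in the outer case, I would either verify it directly, or avoid it by using Theorem~\ref{thm:int-sk-gluing}. In the latter route, $\sk_\AM(M,K)$ is the space of invariants $\Hom_{\Ahat}(\unit,-)$ of the relative tensor product. Finite dimensionality of the relative tensor product as a $k$-vector space then implies finite dimensionality of its invariants, so the Müger-centre hypothesis is not essential for the conclusion. Either route gives
\[\sk_\AM(M,K)\ \hookleftarrow\!\!\!\!\to\ \sk^{\mathrm{int}}_{\A}(M\setminus \nu(K))\otimes_{\Dcal_\A}\mathcal{L}_\M ,\]
meaning an isomorphism, or the invariants of that tensor product, with $\Dcal_\A\simeq \Dq(\widetilde{G})$.

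The second step uses two earlier results. By Theorem~\ref{thm:QSP-holonomic}, $\mathcal{L}_\M$ is a holonomic $\Dq(\widetilde{G})$-module. The bulk factor $\sk^{\mathrm{int}}_{\widetilde{G}}(M\setminus\nu(K))$ is the internal skein module of a 3-manifold with torus boundary. The hypothesis that $\Dq(\widetilde{G})$-modules of geometric origin are holonomic, i.e.\ the conclusion of Theorem~\ref{thm:Jordan-Romaidis}, then says exactly that its tensor product over $\Dq(\widetilde{G})$ with any holonomic module is finite dimensional. Applying this with $\mathcal{L}=\mathcal{L}_\M$ gives finite dimensionality.

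The main point needing care is matching conventions: the $\Dcal_\A$-module structure on $\mathcal{L}_\M$ used in Theorem~\ref{thm:QSP-holonomic} must agree with the one appearing in the gluing formula. In particular, the left/right structures coming from the two sides of the torus differ by the orientation-reversing identification, which induces an anti-automorphism of $\Dq(\widetilde{G})$. I would check that this anti-automorphism preserves holonomicity. At $q=1$ it is induced by a symplectic (or anti-symplectic) automorphism of $\widetilde{G}\times\widetilde{G}$, which carries Lagrangian supports to Lagrangian supports, so a lattice witnessing holonomicity transports accordingly.

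For multiple defect components, the holonomicity statement must also be taken in its multi-gate, external-tensor-product form. The external tensor product $\mathcal{L}_\M^{\boxtimes n}$ has Lagrangian support, being a product of Lagrangians. This case is covered by the general version of \cite{JR}, which the hypothesis is understood to include.
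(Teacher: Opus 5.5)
Your proposal is correct and follows the paper's proof. You glue along the boundary torus of $\nu(K)$ via Theorem~\ref{thm:int-sk-gluing}, use Theorem~\ref{thm:QSP-holonomic} for $\mathcal{L}_\M$ and the geometric-origin hypothesis for the knot complement, and pass to invariants of a finite-dimensional space. The paper takes $\langle\sigma\rangle$-invariants, reflecting the $\Rep(\Z_2)$ M\"uger centre in the outer case. You take $\Hom_{\Ahat}(\unit,-)$ directly, which works equally well, since the relative tensor product formed in $\Ahat$ has the ordinary tensor product over $\Dq(\widetilde{G})$ as its underlying vector space.

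Your extra remarks on the sidedness of the $\Dq(\widetilde{G})$-action and on multi-component defect links go beyond what the paper checks. You are right that multi-component links need a multi-boundary version of the holonomicity hypothesis; the paper's argument is really written for a single defect knot.
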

\begin{proof}
    By Theorem~\ref{thm:int-sk-gluing} (see also Corollary \ref{cor:def-skmod}), we may express the defect skein module $\sk_{(G,G^\theta)}(M,K)$ as 
    \[\sk_{(G,G^\theta)}(M,K)\cong \Hom_{\langle\sigma\rangle}\left(\unit,  \sk^{\mathrm{int}}_{\widetilde{G}}(M\setminus K)\otimes_{\Dq(\widetilde{G})}\mathcal{L}_\M\right)\]
    where $\langle \sigma \rangle =1$ (resp.\ $\langle \sigma\rangle = \Z_2$) in the inner (resp.\ outer) case and $\widetilde{G} = G\rtimes \langle \sigma \rangle$. 
    Since $\mathcal{L}_\M$ is holonomic by Theorem \ref{thm:QSP-holonomic} and $\sk^{\mathrm{int}}_{\widetilde{G}}(M\setminus K)$ is holonomic by assumption (unconditionally in the inner case when $G = \Gm, \mathrm{SL}_2, \mathrm{GL}_2$ by Theorem \ref{thm:Jordan-Romaidis}), we conclude that the tensor product is holonomic over $k$ and hence finite dimensional. To conclude the proof, if $V$ is a finite dimensional $\langle \sigma \rangle$-representation, then its space of invariants $\Hom_{\langle \sigma \rangle}(\unit, V) = V^{\langle \sigma \rangle}$ is automatically finite dimensional. 
\end{proof}

\section{Equivariant skein theory and double affine Hecke algebras}

In certain situations, skein theory with quantum symmetric pair defects can be computed equivalently via \textit{$\Z_2$-equivariant factorization homology} on the level of surfaces \cite{Wee19}. This perspective is both of theoretical and computational importance, as it allows us to conceptualize certain defect skein calculations in terms of equivariant topology. In this section, we explicit the connection between $\Z_2$-equivariant skein theory and skein theory with codimension 2 defects, and we construct representations of the double affine Hecke algebra (DAHA) of Type $\mathrm{C}^\vee\mathrm{C}_n$ as an application of the skein theory with Type $\mathrm{AIII}$ defects. We remark that modules for the DAHA of Type $\mathrm{C}^\vee\mathrm{C}_n$ have been constructed by \cite{Jordan-Ma} using the same data via purely algebraic methods, and we leave to future work the full algebraic analysis of the (generalized) DAHA modules we construct here.

\subsection{$\Z_2$-equivariant skein theory}\label{subsec:equivariant-sk}

In this section, we will introduce and construct $\Z_2$-equivariant skein theory as a $\Z_2$-equivariant TQFT in a similar fashion to $\Z_2$-equivariant factorization homology \cite{Wee20}. In fact, we start by establishing a relation between stratified and equivariant factorization homology in certain situations and then construct $\Z_2$-equivariant skein modules. Finally, we apply these constructions to representation theory of DAHA of type $\mathrm{C}^\vee\mathrm{C}_n$.\\ 

Consider a smooth oriented $n$-manifold $X$ with an orientation preserving $\Z_2$-action (where $n = 2,3$ being the most important dimensions for skein theory). We will always assume that the action is generically stabilizer free, that the fix points of the action form a codimension 2 submanifold of $X$, and we write $X_{\Z_2}:=X/\Z_2$ for the orbifold quotient. 
Let $\operatorname{Man}^{\Z_2}_2$ denote the category of (oriented) surfaces with an (orientation-preserving) $\Z_2$-action and $\Z_2$-equivariant embeddings as morphisms. The subcategory $\operatorname{Disk}_2^{\Z_2}\subset \operatorname{Man}_2^{\Z_2}$ denotes the full subcategory where the surfaces are disjoint unions of disks. Finally, $\Bord_3^{\Z_2}$ denotes the category with objects in $\operatorname{Man}_2^{\Z_2}$ whose underlying surfaces are closed and morphisms given by bordisms with a $\Z_2$-action with $\Z_2$-equivariant boundary parametrizations. 

\subsubsection{Equivariant vs stratified factorization homology}
\label{subsec:equiv-skcat}
Following \cite{Wee19}, we consider two types of local pictures for $\Z_2$-equivariant factorization homology:
\begin{itemize}
    \item the disk $\mathbf{D}$, represented by a free action of $\Z_2$ swapping two identical copies of the 2-disk $\mathbb{D}$, and
    \item the marked disk $\mathbf{D}_*$, represented by the action of $\Z_2$ on the 2-disk $\mathbb{D}$ by rotation by $\pi$. 
\end{itemize}
Note that $\mathbf{D}$ may be regarded as a non-equivariant 2-disk by passing to the quotient by $\Z_2$, while $\mathbf{D}_*$ may be regarded as the non-equivariant 2-disk with codimension 2 defect $\mathbb{D}_*$, where the defect locus is precisely the $\Z_2$-fixed point at the center.

As shown in \cite{Wee20}, a $\Z_2$-equivariant disk-algebra in $\prl$ is exactly a triple $(\A,\Phi,\M)$ of: 
\begin{itemize}
    \item a ribbon tensor category $\A$ with a tensor involution $\Phi:\A\to \A$, \ie $\Phi^2\simeq \id$, corresponding the free $\Z_2$-action of $\mathbf{D}$ and 
    \item a $\Phi$-twisted braided module category $\M$ over $\A$, with $\Phi$-twisted braiding $e_{m,x}: m\ract x \xrightarrow{\sim} m\ract \Phi(x)$ subject to relations. 
\end{itemize}

\begin{rem}
   \begin{enumerate}
   \item 
   Note that $(\A,\id, \M )\equiv \AM$ is precisely the datum of a ribbon tensor category and a (untwisted) braided module category and thus coincides with our algebraic input for stratified factorization homology (resp.\ skein theory with codimension two defects). 
   \item For a general triple $(\A, \Phi, \M)$ one obtains the triple $(\A^\Phi, \id, \M)$ where $\A^\Phi$ is $\A$ if $\Phi$ is inner and the $\Z_2$-equivariantization of $\A$ by the $\Phi$-action if it is outer. The $\Phi$-twisted braiding of $\M$ induces canonically an (untwisted) braiding over $\A^{\Z_2}$. 
\end{enumerate}
\end{rem}

Following \cite{Wee20}, we may consider the following definition of $\Z_2$-equivariant factorization homology of $\Sigma$.  

\begin{definition} \label{def:equiv-fact-hom}
    The \textit{$\Z_2$-equivariant factorization homology} with coefficients in $(\mathcal{A}, \mathcal{M})$ of $\Z_2 \acts \Sigma$ is given by  the left Kan extension
    \[
    \begin{tikzcd}
        \operatorname{Disk}_2^{\Z_2}\arrow[rr,"{(\A,\Phi, \M)}"] \arrow[d, hook] & & \prl\\
        \operatorname{Man}_2^{\Z_2}\arrow[urr,swap,"{\int_{-}{(\A,\Phi,\M)}}"] &&
    \end{tikzcd}~.
    \]
\end{definition}

It is an immediate consequence of our definition that the ${\Z_2}$-equivariant factorization homology of $\Z_2 \acts \Sigma$ with coefficients in $(\mathcal{A}, \mathcal{M})$ may be canonically identified with (nonequivariant) defect factorization homology of $(\Sigma_{\Z_2}, P_{\Z_2})$ with coefficients in $(\mathcal{A}^{\Phi}, \mathcal{M})$, where $\Sigma_{\Z_2}$ is the (coarse) quotient surface and $P_{\Z_2} \subset \Sigma_{\Z_2}$ is the image of the (possibly empty) set of fixed points of ${\Z_2}$ on $\Sigma$:
\begin{equation}\label{eq:equiv-vs-def-fact-hom}
    \int_{\Z_2 \acts \Sigma} (\mathcal{A}^\Phi,\id, \mathcal{M}) \simeq \int_{(\Sigma_{\Z_2}, P_{\Z_2})} (\mathcal{A}^{\Phi}, \mathcal{M})
\end{equation}
Indeed, the (symmetric monoidal) quotient functor \[\operatorname{Man}_2^{\Z_2}\to \operatorname{Man}_2^{\mathrm{mkd}}, (\Z_2\acts \Sigma) \mapsto (\Sigma_{\Z_2}, P_{\Z_2})
\]
makes the following commute (up to natural isomorphism): 
\[
\begin{tikzcd}
        \operatorname{Disk}_2^{\Z_2}\arrow[rr,"{(\A,\Phi, \M)}"] \arrow[d, hook] & & \prl\\
        \operatorname{Man}_2^{\Z_2}\arrow[rr,"(-)_{\Z_2}"] && \operatorname{Man}_2^{\mathrm{mkd}} \arrow[u,swap,"{\int_{-}(\A^\Phi, \M)}"]
\end{tikzcd}~.
\]
Hence, it satisfies to show that the associated composite $\operatorname{Man}_2^{\Z_2}\to \prl$ satisfies equivariant factorization homology. This follows from the fact that ${\Z_2}$-equivariant embeddings from $\mathbf{D}$ and $\mathbf{D}_*$ to $\Sigma$ are equivalent to marked embeddings from $\mathbb{D}$ and $\mathbb{D}_*$ to $(\Sigma_{\Z_2}, P_{\Z_2})$.

\begin{rem}\label{rem:branched-covers}
    While the equivariant factorization homology of ${\Z_2} \acts \Sigma$ may be computed via the defect theory, and thus can admit skein-theoretic presentations as in Definition \ref{def:skcat}, the converse is not true. Indeed, given a surface with point defects, it is not always possible to find an equivalent ${\Z_2}$-equivariant situation, meaning a branched double cover whose branch points are exactly the point defects on the quotient. For instance, if $\Sigma$ is a closed genus $g$ surface with $n$ marked points, then Hurwitz's formula dictates that any connected branched $\Z_2$-cover $\widetilde{\Sigma} \to \Sigma$ of genus $\widetilde{g}$ must satisfy the topological condition
    $$2-2\widetilde{g}  = 4-4g-n \iff \widetilde{g} = 2g-1 +\frac{n}{2}.$$
    In other words, the number of marked points must be even, and we have a lower bound of $n \geq 2$ in the case when $g = 0$.
\end{rem}

\subsubsection{Equivariant skein theory}\label{subsec:equiv-skmod}

Having established the comparison between equivariant factorization homology and stratified factorization homology, we shift our focus to the construction of $\Z_2$-equivariant skein theory and thus the extension to $3$-manifolds. We restrict to the following type of $\Z_2$-actions on 3-manifolds. 
\begin{definition}\label{def:Galois-action}
    Let $M$ be a closed oriented 3-manifold with $\Z_2$-action. We say that the action is \textit{Galois} if the quotient map $M \to M_{\Z_2}$ is a branched covering. That is, the fixed points of $\Z_2$ form an embedded link (which we call the \textit{ramification link}) and the induced $\Z_2$-action on the unit normal bundle of the ramification link is given by $\pi$-rotation.   
\end{definition}

In the following, we will always assume that actions $\Z_2 \acts M$ are Galois. We write $\widetilde{K}_{\Z_2} \subset M$ for the ramification link, and $K_{\Z_2} \subset M_{\Z_2}$ for its (isomorphic) image link in the quotient $M_{\Z_2}$. 

We shall set up carefully the notion of $\Z_2$-equivariant skein modules of $\Z_2 \acts M$ with coefficients in $(\A,\Phi,  \M)$; we remind that we require $\M$ to be a $\Phi$-braided module category over the equivariantized category $\A$, and we shall denote the action of $\Z_2$ on $\A$ by $x \mapsto \Phi(x)$. 
\begin{definition}\label{eq:equiv-ribbons}
    An \textit{equivariant $(\A,\Phi,\M)$-ribbon graph} in a 3-manifold $M$ with Galois $\Z_2$-action is a finite $(\A,\Phi,\M)$-ribbon graph $\gamma$ in $M$ with tangle $\widetilde{K}_{\Z_2}$ (in the sense of Definition \ref{def:AM-ribbons} with the following condition: the action of $\Z_2$ leaves $\gamma$ invariant, and the action is equivariant on $\gamma \setminus \widetilde{K}_{\Z_2}$, \ie 
    \begin{equation} \label{eq:equiv-ribbon-graph}
        \sigma \cdot (\gamma \setminus \widetilde{K}_{\Z_2}) = \Phi(\gamma \setminus \widetilde{K}_{\Z_2}).
    \end{equation}
\end{definition}
Concretely, \eqref{eq:equiv-ribbon-graph} means the following: away from the ramification link, an equivariant ribbon graph is acted on simply transitively by $\Z_2$, and its $\A$-labels are flipped by $\Phi$ upon the applying the action of $\sigma$. 

\begin{definition}\label{def:equiv-skmod}
    Let $M$ be an oriented 3-manifold with a Galois $\Z_2$-action and ramification link $\widetilde{K}_\Gamma$. The \textit{equivariant $(\A,\M)$-skein module} is defined as
    $$\sk_{(\A,\Phi,\M)}(\Z_2 \acts M) := \frac{\langle \text{equivariant }(\A,\Phi,\M)\text{-ribbon graphs in }M \rangle}{\mathrm{lr}(_{\A^{\Phi}} \M), \mathrm{lr}(\A^{\Phi})}.$$
    \end{definition}
    Explicitly, the local relations are defined as kernels of the $\Z_2$-equivariant ribbon functor
    \begin{equation}
\mathrm{ev}_\A:\operatorname{Rib}_\A(\mathbf{D})\to \A
    \end{equation}
    resp.\ the $\Phi$-twisted braided module functor
    \begin{equation}
        \mathrm{ev}_\M: \operatorname{Rib}_\M(\mathbf{D}_\ast)\to \M~.
    \end{equation}
    Here, $\operatorname{Rib}_\A(\mathbf{D})$ is defined through $\Z_2$-equivariant ribbon graphs on the cylinder $\mathbf{D}\times [0,1]$ and its $\Z_2$-action is given by swapping the two disk cylinders of $\mathbf{D}\times [0,1]$.
    Similarly, $\operatorname{Rib}_\M(\mathbf{D}_\ast)$ is defined through $\Z_2$-equivariant ribbon graphs in $\mathbf{D}_\ast \times [0,1]$ and its $\Phi$-twisted braided structure over $\A$ is induced by $\pi$-rotation around the origin (see Figure~\ref{fig:z2-equiv-local-skein}).
    
\begin{figure}
\begin{tikzpicture}[yscale=0.4]
\begin{scope}
\draw (-1,0) -- (-1,4);
\draw (1,0) -- (1,4);
\draw (0,4) ellipse (1 and 0.3);
\draw (0,0) ellipse (1 and 0.3);
\fill (-0.5,4) circle (2pt);
\fill (0,4) circle (2pt);
\fill (0.5,4) circle (2pt);
\fill (-0.5,0) circle (2pt);
\fill (0,0) circle (2pt);
\fill (0.5,0) circle (2pt);
\draw[red, thick] (0,4) .. controls (0,3.5) and (0,3) .. (0,2.4);
\draw[red, thick] (0,2.1) .. controls (0,1.8) and (0,1.5) .. (0,0);
\draw[green!60!black, thick]
(-0.5,4)
.. controls (-0.5,3) and (-0.2,2.4) ..
(-0.15,2.3);
\draw[green!60!black, thick]
(0.2,2)
.. controls (0.55,1.5) and (0.55,0.7) ..
(0.5,0);
\draw[blue, thick]
(0.5,4)
.. controls (0.5,3) and (0.1,2.3) ..
(-0.25,1.9)
.. controls (-0.55,1.5) and (-0.55,0.7) ..
(-0.5,0);
\node[green!60!black] at (-0.7,3.2) {$V$};
\node[blue] at (1,3.2) {$\Phi(V)$};
\node[red] at (0,4.35) {$M$};
\end{scope}
\node at (3,2) {$=$};
\begin{scope}[xshift=6cm]
\draw (-1,0) -- (-1,4);
\draw (1,0) -- (1,4);
\draw (0,4) ellipse (1 and 0.3);
\draw (0,0) ellipse (1 and 0.3);
\fill (-0.5,4) circle (2pt);
\fill (0,4) circle (2pt);
\fill (0.5,4) circle (2pt);
\fill (-0.5,0) circle (2pt);
\fill (0,0) circle (2pt);
\fill (0.5,0) circle (2pt);
\draw[red, thick] (0,4) -- (0,0);
\draw[green!60!black, thick]
(-0.5,4)
.. controls (-0.3,3) and (-0.1,2.4) ..
(0,2)
.. controls (0.1,1.6) and (0.3,1) ..
(0.5,0);
\draw[blue, thick]
(0.5,4)
.. controls (0.3,3) and (0.1,2.4) ..
(0,2)
.. controls (-0.1,1.6) and (-0.3,1) ..
(-0.5,0);
\node[green!60!black] at (-0.7,3.2) {$V$};
\node[blue] at (1,3.2) {$\Phi(V)$};
\node[red] at (0,4.35) {$M$};
\end{scope}
\end{tikzpicture}
\caption{Local relations replaces the $\Z_2$-equivariant skein on the left with the $\Z_2$-equivariant skein on the right with the coupon labelled by the braiding $e_{V,M}: V\lact M \xrightarrow{\sim}\Phi(V)\lact M$. }
\label{fig:z2-equiv-local-skein}
\end{figure}

\begin{prop}\label{prop:equiv-vs-def-skmod}
     Let $M$ be an oriented 3-manifold with a Galois $\Z_2$-action with ramification link $\widetilde{K}_{\Z_2}$. Writing $M_{\Z_2}$ for the (coarse) quotient of $M$ by $\Z_2$ and $K_{\Z_2} \subset M_{\Z_2}$ for the image of the ramification link. Then we have an identification of the $\Z_2$-equivariant skein module of $\Z_2 \acts M$ with the defect skein module of $(M_{\Z_2}, K_{\Z_2})$
    $$\sk_{(\mathcal{A}^\Phi,\id,\mathcal{M})}(\Z_2 \acts M) \simeq \sk_{(\mathcal{A}^{\Phi}, \mathcal{M})}(M_{\Z_2}, K_{\Z_2}).$$
\end{prop}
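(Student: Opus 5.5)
The plan is to construct mutually inverse linear maps at the level of ribbon graphs, namely pushforward along the branched covering $\pi: M \to M_{\Z_2}$ and pullback along it, and then to check that the two sets of local relations correspond. Here $\mathcal{A}^\Phi$ is the equivariantized category, so the $\Phi$ appearing in the equivariant data acts trivially after equivariantization. We may therefore treat the coefficients as $(\A^\Phi,\id,\M)$ throughout.

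\emph{Step 1: pullback on generators.} Given an $(\A^\Phi,\M)$-ribbon graph $\Gamma$ in $(M_{\Z_2},K_{\Z_2})$, first isotope it to be generic. This means that the $\A^\Phi$-labelled part meets the defect link only at coupons, and approaches those coupons as in Figure~\ref{fig:line-def-local}. Define $\pi^{-1}(\Gamma)$ as follows. Away from $K_{\Z_2}$, $\pi$ is an honest double cover, so the $\A^\Phi$-part lifts to two $\Z_2$-swapped copies; since $\Phi=\id$ on $\A^\Phi$, the labels agree, and condition \eqref{eq:equiv-ribbon-graph} holds. Along $K_{\Z_2}$, the defect lifts isomorphically to $\widetilde{K}_{\Z_2}$ with the same $\M$-labels and coupons. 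An $\A^\Phi$-strand entering a defect coupon lifts to a symmetric pair of strands entering the lifted coupon from opposite sides, as in the local model $\mathbf{D}_*\times[0,1]$. Conversely, an equivariant ribbon graph is $\Z_2$-invariant and free away from $\widetilde{K}_{\Z_2}$, so its image $\pi(\gamma)$ is a ribbon graph in the quotient. These two assignments are mutually inverse bijections on isotopy classes. Equivariant isotopies descend to isotopies in the quotient, and isotopies in the quotient lift, because ambient isotopies of $M_{\Z_2}$ preserving $K_{\Z_2}$ lift uniquely to equivariant isotopies of the branched cover once they fix the link setwise.

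\emph{Step 2: matching local relations.} It suffices to compare the relations on the two kinds of local charts; this is the same reasoning used for \eqref{eq:equiv-vs-def-fact-hom}.
\begin{itemize}
\item On a chart $\mathbf{D}\times[0,1]$ (two swapped balls), $\mathrm{ev}_\A$ on an equivariant graph is by definition evaluation of one copy. So $\ker(\mathrm{ev}_\A)$ corresponds exactly to $\mathrm{lr}_{\A^\Phi}$ on the quotient ball $\mathbb{D}\times[0,1]$.
\item On a chart $\mathbf{D}_*\times[0,1]$, the quotient is $\mathbb{D}_*\times[0,1]$. The $\pi$-rotation generating the twisted braiding (Figure~\ref{fig:z2-equiv-local-skein}) descends to the full $2\pi$ loop \eqref{eq:brmod-operad} around the defect, which generates the braided module structure $e$ in the quotient. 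Hence $\mathrm{ev}_\M$ on $\operatorname{Rib}_\M(\mathbf{D}_*)$ factors through pushforward followed by $\mathrm{ev}_\AM$ of Proposition~\ref{prop:AM-evaluation-functor}, and the two kernels agree.
\end{itemize}
Since every relation is supported in a chart of one of these two types, the bijection of Step 1 descends to the isomorphism in the statement.

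\emph{Main obstacle.} The delicate point is the identification of the twisted braiding with the untwisted braiding of $\M$ over $\A^\Phi$ in the second chart. The twisted braiding is defined via a half-rotation upstairs, while the untwisted braiding is the full loop downstairs. One must check that the canonical untwisted braiding induced from the $\Phi$-twisted one (Remark in \S\ref{subsec:equiv-skcat}, following \cite{Wee20}) is exactly the image of the generator of the equivariant disk operations under the quotient functor $\operatorname{Disk}_2^{\Z_2}\to\operatorname{Disk}_2^{\mathrm{or,mrk}}$. This includes compatibility of the balancings, since a full $2\pi$ twist upstairs is a $4\pi$ twist downstairs, and also framing conventions near $K_{\Z_2}$. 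I would settle this by applying \eqref{eq:equiv-vs-def-fact-hom} to the disks $\mathbf{D}_*$, where the two evaluations are identified as functors. Everything else is a routine lifting of isotopies along the branched cover.
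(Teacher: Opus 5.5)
Your proposal is correct and follows the paper's route: you identify generators with their $\Z_2$-quotients, then match the bulk relations chart by chart and the relation near the ramification link (Figure~\ref{fig:z2-equiv-local-skein}) with the braided-module relation \eqref{eq:brmod-operad}, where the half-rotation upstairs becomes the full loop downstairs; you just supply more detail (isotopy lifting, framing and balancing compatibility) than the paper does. One small slip: a component downstairs with nontrivial monodromy in the double cover of $M_{\Z_2}\setminus K_{\Z_2}$ (e.g.\ a meridian of the defect) lifts to a single $\sigma$-invariant component rather than two swapped copies, but taking the full preimage $\pi^{-1}(\Gamma)$ already handles this and leaves your bijection intact.
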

\begin{proof}
    The desired identification in fact lifts to a canonical isomorphism between the vector spaces generated by equivariant $(\A^\Phi,\id,\M)$-ribbon graphs in $\Z_2 \acts M$ and $(\A^\Phi,\M)$-ribbon graphs in the quotient $(M_{\Z_2}, K_{\Z_2})$. The local relations that one imposes in the bulk -- that is, away from the ramification link $\widetilde{K}_{\Z_2}$ in the equivariant case and away from the defect link $K_{\Z_2}$ in the defect case -- are also canonically identified by passing to the quotient by $\Z_2$. Finally, the local relations that one imposes in a neighbourhood of the ramification link (see Figure~\ref{fig:z2-equiv-local-skein}) can be identified with the local relation, see \eqref{eq:brmod-operad}, near the defect $K_{\Z_2}$. 
\end{proof}

\begin{rem}\label{rem:beta-fact-hom}
    In principle, instead of defining the (usual) skein module via generators and relations, it should be possible to integrate the ribbon category $\mathcal{A}$ over a closed 3-manifold via \textit{$\beta$-factorization homology} \cite{AFRbeta} to obtain a more categorical description of $\sk_\A$. With this technology, one would also be able to deduce the equivalence between equivariant and defect skein modules via categorical considerations as in the surface case \eqref{eq:equiv-vs-def-fact-hom}.
\end{rem}

\subsection{The DAHA of Type $\mathrm{C}^\vee\mathrm{C}_n$}
\label{subsec:DAHA}

Besides its evident utility in the topological theory of knots and links in 3-dimensions, quantum symmetric pairs have also featured prominently in the representation theory of double affine Hecke algebras (DAHAs) outside of type $\mathrm{A}_n$. We shall use the theory developed in \S \ref{subsec:equivariant-sk} to describe this application in topological terms, which leads immediately to generalizations that we investigate in detail in future work.

In \cite{JorDAHA} and \cite{Jordan-Ma}, quantizations of the DAHA representations of Calaque--Enriquez--Etingof \cite{CEE} in type $\mathrm{A}_n$ and Etingof--Freund--Ma \cite{EFM} in type $\mathrm{C}^\vee\mathrm{C}_n$ were produced, given the data of a quantum group and a quantum symmetric pair of Type AIII, respectively. The constructions in the Type $\mathrm{A}$ case can be conveniently phrased and conceptually generalized from a skein-theoretic perspective, as explained by \cite{GJVtori}. To recapitulate, we start by taking $\Sigma$ a closed oriented surface and letting $\mathrm{Br}_{\Sigma, n}$ denote the braid group on $n$ strands on $\Sigma$ (which may also be presented as the fundamental group of the configuration space of $n$ points on $\Sigma$). Choosing an object $V$ in a ribbon category $\A$, there is a homomorphism of algebras
\begin{equation}\label{eq:braid-group-action}
    \varphi_V: k[\mathrm{Br}_{\Sigma, n}] \longrightarrow \mathrm{End}_{\mathrm{Sk}_{\A}(\Sigma)}(\mathbf{V})
\end{equation}
where $\mathbf{V} \in \mathrm{Sk}_{\A}(\Sigma)$ denotes the object with $n$ points colored by $V$, and the action of an element of $\mathrm{Br}_{\Sigma, n}$ on $\mathbf{V}$ is obtained by stacking the braid on top of $\mathbf{V}$, regarded as an endomorphism of $\mathrm{Sk}_{\A}(\Sigma)$. When $\A = \mathrm{Rep}_q\mathrm{GL}_n$ and $\Sigma$ is the 2-disk, the annlus, and the 2-torus, the homomorphism $\varphi_V$ is known to factor through the finite, the affine, and the double affine Hecke algebras of Type $\mathrm{A}_n$, respectively, see Proposition 7.2 of \textit{op. cit}. In particular, we obtain a functor
\begin{equation} \label{eq:skcat-Hecke}
    \mathrm{Sk}_{\mathcal{A}}(\Sigma) \longrightarrow \mathrm{Rep}(\mathrm{Br}_{\Sigma, n})
\end{equation}
$$\mathbf{W} \longmapsto \mathrm{Hom}_{\mathrm{Sk}_{\A}(\Sigma)}(\mathbf{W}, \mathbf{V})$$
by pre-composition with $\varphi_V$. The image is not arbitrary: the image of \eqref{eq:skcat-Hecke} consists of $k[\mathrm{Br}_{\Sigma,n}]$-modules which factor through the skein relations.

We may generalize slightly the preceding construction to orbifold surfaces. To set things up, suppose $\Sigma$ is equipped with a Galois action by $\Z_2$, with orbifold quotient $\Sigma/\Z_2$, which we may regard equivalently as a surface $\Sigma_{\Z_2}$ with point defects. We will take $\A = \mathrm{Rep}_q(G)$ (or an equivariantization if needed) as input for the bulk skein theory on $\Sigma$, and we choose a quantum symmetric pair $\M = \mathrm{Rep}_{\mathbf{c},\mathbf{s}}(G^\theta)$ as input for the point defects. We obtain an analogue of \eqref{eq:skcat-Hecke} 
\begin{equation}\label{eq:skcat-Hecke-Z2}
    \mathrm{Sk}_{\A, \M}(\Sigma_{\Z_2}) \longrightarrow \mathrm{Rep}(\mathrm{Br}_{\Sigma_{\Z_2},n})
\end{equation}
where $\mathrm{Br}_{\Sigma_{\Z_2},n}$ denotes the braid group on $n$ strands for the orbifold surface, or equivalently the fundamental group of the configuration of $n$ nonsingular points on the orbifold $\Sigma_{\Z_2}$ (see Definition \cite{Wee19} for a more expanded definition). The image of this functor consists of modules of the group ring $k[\mathrm{Br}_{\Sigma_{\Z_2},n}]$ modulo local skein relations imposed by the quantum symmetric pair $(\A,\M)$, and we may regard this quotient as an analogue of the DAHA associated to a $\Z_2$-equivariant surface and a choice of symmetric pair $(G,\theta)$. 

The most interesting case, which directly relates to the DAHA of Type $\mathrm{C}^\vee\mathrm{C}_n$ can be constructed as follows. Recall that the DAHA of Type $\mathrm{C}^\vee\mathrm{C}_n$ with six parameters $(v,t,t_0,t_n,u_0,u_n)$, which we denote by $\mathbb{H}_{\mathrm{C}^\vee\mathrm{C}_n}(v,t,t_0,t_n,u_0,u_n)$, can be presented as a quotient of the group ring (over a $k$-algebra $\kk$ containing the indeterminants $(v,t,t_0,t_n,u_0,u_n)$) of the \textit{double affine braid group of Type } $\mathrm{C}^\vee\mathrm{C}_n$, denoted $\mathrm{Br}_{\mathrm{C}^\vee\mathrm{C}_n}$. The latter can be presented with generators $T_i$ for $i = 0, \ldots, n$ and $X_j,Y_j$ for $j = 1, \ldots, n$, modulo the following relations:
\begin{itemize}
    \item (Braid relations of Type $\mathrm{C}^\vee\mathrm{C}_n$). \begin{equation}
 \begin{matrix}T_iT_j = T_jT_i \text{ if } |i-j| > 1,\\[1mm] T_iT_{i+1}T_i = T_{i+1}T_iT_{i+1} \text{ for } i = 1,\ldots, n-2, \\[1mm] T_0T_1T_0T_1 = T_0T_1T_0T_1, T_{n-1}T_nT_{n-1}T_n = T_n T_{n-1}T_n T_{n-1} \end{matrix}
\end{equation}
    \item (Fundamental group of the torus). 
     \begin{equation}   X_iX_j = X_jX_i, \, Y_iY_j = Y_j Y_i \text{ for } i,j = 1,\ldots, n,
    \end{equation}
    \item (Elliptic braid relations).
    \begin{equation}
        \begin{matrix}T_i X_j = X_j T_i, \, T_iY_j = Y_jT_i \, \text{ if } |i-j| > 1 \text{ or if } (i,j) = (n,n-1), \\[1mm] T_iX_iT_i = X_{i+1}, \quad T_iY_{i+1}T_i = Y_i \, \text{ for } i = 1, \ldots, n-1.\\[1mm] X_j(P^{-1}Y_1) = (P^{-1}Y)X_j \text{ for } j = 2,\ldots, n-1.\end{matrix} 
    \end{equation}
    where $P := T_1\cdots T_{n-1}T_nT_{n-1}\cdots T_1$.
\end{itemize}
Alternatively, we may present $\mathrm{Br}_{\mathrm{C}^\vee\mathrm{C}_n}$ topologically as the (orbifold) fundamental group of the configuration space of $n$ unordered \textit{nonsingular} points on the $\Z_2$-orbifold surface obtained from the hyperelliptic involution on a 2-torus
$$\Sigma = \R^2/\Z^2, \text{ with involution } \sigma: (x,y) \mapsto (-x,-y).$$
The action of $\Z_2$ has 4 singular points (corresponding to the 2-torsion points on $\Sigma$), with $T_0, T_n$ corresponding to loops around two of them, while the $X_j, Y_j$'s correspond to the meridional and longitudinal loops around the $j$th copy of $\Sigma$. Note then that the loops around the remaining two singular points may be expressed in terms of the existing symbols as
$$X_n^{-1}T_n^{-1} \text{ and } Y_1^{-1}P X_1.$$

The DAHA $\mathbb{H}_{\mathrm{C}^\vee\mathrm{C}_n}(v,t,t_0,t_n,u_0,u_n)$ is then defined as the quotient of $\kk[\mathrm{Br}_{\mathrm{C}^\vee\mathrm{C}_n}]$ modulo the following Hecke relations \cite[Corollary 10.9]{Jordan-Ma}
\begin{equation} \label{equation Hecke relations}
    \begin{matrix}
         T_i \sim t \,  \text{ for } i = 1,\ldots, n-1, \\[1mm] T_0 \sim t_0, \quad T_n \sim t_n, \quad X_n^{-1}T_n^{-1} \sim u_n, \quad v^{-1}Y_1^{-1}P X_1 \sim u_0,
    \end{matrix} 
\end{equation}
where we have written $X \sim x$ to mean the quadratic Hecke relation $X^2 - (x-x^{-1})X +1 = 0$, as is customary.

\begin{prop} \label{prop: DAHA}
    Let $\Sigma$ be the 2-torus with $\Z_2$ acting by hyperelliptic involution, and let $(\A,\M)$ be a quantum symmetric pair of Type AIII. Let $\mathbf{V}_{\Z_2}\in \sk_\AM(\Sigma_{\Z_2})$ be the object with $n$ equivariant points in the regular part of $\Sigma$ and $\unit\in\M$ labelling the singular points.  Then \eqref{eq:skcat-Hecke-Z2} induces a representation of $\mathrm{Br}_{\mathrm{C}^\vee\mathrm{C}_n}$ which factors through the DAHA $\mathbb{H}_{\mathrm{C}^\vee\mathrm{C}_n}$:
    \[\begin{tikzcd}
        \kk[\mathrm{Br}_{\mathrm{C}^\vee\mathrm{C}_n}] \arrow[r]\arrow[d]& \End(V_{\Z_2}) \\
        \mathbb{H}_{\mathrm{C}^\vee\mathrm{C}_n}(v,q,s_0,s_1,s_2,s_3)\arrow[ur]& 
    \end{tikzcd}\]
    where $s_0, s_1,s_2,s_3 \in k$ are the chosen parameters for the Type $\mathrm{AIII}$ quantum symmetric pair at the defect points corresponding to $T_0, T_n, X_n^{-1}T_n, Y_1^{-1}PX_1$, respectively.
\end{prop}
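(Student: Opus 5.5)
The proof splits into two parts. First we show that the braid group action \eqref{eq:skcat-Hecke-Z2} is a well-defined homomorphism out of $\mathrm{Br}_{\mathrm{C}^\vee\mathrm{C}_n}$. Then we check that each Hecke relation \eqref{equation Hecke relations} holds, using a local skein computation in a disk or a marked disk.

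\textbf{Step 1: defining the braid group action.} By Proposition~\ref{prop:sk-cat-fact} and the identification \eqref{eq:equiv-vs-def-fact-hom}, we may work throughout with the defect skein category $\sk_\AM(\Sigma_{\Z_2},P_{\Z_2})$. Here $\Sigma_{\Z_2}$ is the pillowcase sphere and $P_{\Z_2}$ consists of the four images of the $2$-torsion points, each labelled by $\unit\in\M$. The object $\mathbf{V}_{\Z_2}$ consists of $n$ regular points, each labelled by the defining representation $V$ of $\Rep_q(\GL_N)$ (or its equivariantized version). Stacking braids in $\Sigma_{\Z_2}\times[0,1]$, as in \eqref{eq:braid-group-action}, gives a homomorphism from the orbifold braid group to $\End(\mathbf{V}_{\Z_2})$. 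This is well defined because isotopic braids give equal skeins. The topological presentation of $\mathrm{Br}_{\mathrm{C}^\vee\mathrm{C}_n}$ recalled above, as the orbifold configuration-space fundamental group, identifies this group with $\mathrm{Br}_{\mathrm{C}^\vee\mathrm{C}_n}$. This produces the horizontal arrow in the diagram.

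\textbf{Step 2: the Hecke relations.} Each generator acts through a skein supported either in a disk containing two regular points, or in a marked disk containing one regular point and one defect point.
\begin{itemize}
\item For $T_i$ with $1\le i\le n-1$, the generator acts by the braiding $c_{V,V}$. In $\Rep_q\GL_N$ this satisfies the quadratic relation $T\sim q$, after the usual normalization of the braiding by a power of $q^{1/N}$. This is the computation of \cite{GJVtori}.
\item Each of the four loops $T_0$, $T_n$, $X_n^{-1}T_n^{-1}$ and $v^{-1}Y_1^{-1}PX_1$ becomes, after an isotopy, a single strand encircling one defect point. Under the local relation of Figure~\ref{fig:z2-equiv-local-skein} such a loop evaluates to the braided module automorphism $e_{V,\unit_\M}$, which is the action of $\mathcal{K}_V$ (equivalently $\mathcal{J}$) on $V\lact\unit$. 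So it remains to show that, for the Type AIII coideal with parameters $\mathbf{c},\mathbf{s}$, the $K$-matrix on the defining representation has exactly two eigenvalues, whose ratio gives the Hecke parameter $s_j$.
\end{itemize}

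\textbf{Step 3: the $K$-matrix computation, the main obstacle.} The check in Step 2 reduces to a statement about the explicit Type AIII $K$-matrix on the defining representation. At $q=1$ it specializes to $J=\mathrm{diag}(1^p,(-1)^{N-p})$ by Remark~\ref{rem:q=1-qmm}, so it is diagonalizable with eigenvalues $\pm1$. For generic $q$, the restriction of $V$ to $\mathcal{U}_{\mathbf{c},\mathbf{s}}\g^\theta$ decomposes into two isotypic pieces. By the commutation property $\mathcal{J}b=b\mathcal{J}$ of Remark~\ref{rem:K-matrix-vs-J-matrix}, $\mathcal{J}$ acts on each piece by a scalar. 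These two scalars can be read off from the known solution of the reflection equation \cite{Kol20,Jordan-Ma}, and the quadratic relation $(\mathcal{J}-\lambda_1)(\mathcal{J}-\lambda_2)=0$ then follows. The hard part is bookkeeping. We must match the normalizations, namely the twist factors coming from the balancing \eqref{eq:balancing-element}, the scalar $v$, and the $q^{1/N}$ rescaling, so that the relation takes the form $X\sim s_j$. We must also account for two separate orientation conventions for loops around a puncture: the loop shape itself, and the sign in $X_n^{-1}T_n^{-1}$ versus $T_n$. I would first carry out the normalization for $n=1$, where the defining representation splits as $\C^p\oplus\C^{N-p}$. I would then compare the result with \cite[Corollary 10.9]{Jordan-Ma}, allowing an independent choice of parameters at each of the four defect points as in Remark~\ref{rem:defects-from-multiple-brmods}. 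Once all generators satisfy the Hecke relations, the homomorphism factors through $\mathbb{H}_{\mathrm{C}^\vee\mathrm{C}_n}$, which gives the diagonal arrow.
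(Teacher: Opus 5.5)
Your proposal is correct and follows essentially the same route as the paper. In both, the relations are checked locally: the $T_i$ satisfy the Hecke relation of the $R$-matrix on the defining representation, and the four cone-point loops are governed by the Type AIII defect relation $e_{V,\unit}-e_{V,\unit}^{-1}=(s-s^{-1})\,\id$ of Proposition~\ref{prop:AIII-relations}, with the remaining scalar ($v$, twist factors) left as normalization. The only difference is that the paper quotes this quadratic relation, whereas you re-derive it from the two-isotypic decomposition of $V$ over the coideal subalgebra; this is the same fact the paper uses in proving that proposition, namely that $\End(V)$ is spanned by $\id$ and $J^s$.
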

\begin{proof}
    It suffices to verify that the Hecke relations of \eqref{equation Hecke relations} are included in the local skein relations imposed by the quantum symmetric pair. Note that the first set of relations correspond to the usual quadratic Hecke relations for simple braids, imposed by the $R$-matrix of the ribbon category on the fundamental representation of Type $\mathrm{A}_n$. The three remaining relations, involving the parameters $t_0, t_n, u_0, u_n$, correspond to local skein relations imposed by the choice of defect at each of the 4 singular points. These parameters specialize to $s_0,s_1,s_2,s_3$ respectively according to the defect skein relation \eqref{eq:N=p+q-skein-relations}.
\end{proof}

\begin{rem}
    By taking $\Sigma = \mathbf{D}$ to be the 2-disk with $\Z_2$ acting by $\pi$-rotation, one can analogously construct modules over the Hecke algebra of Type $\mathrm{C}^\vee\mathrm{C}_n$. 
\end{rem}

\section{Perspectives from shifted quantization}\label{sec:shifted-quantization}

In this section, we situate the construction of our skein theory with line defects in the context of \textit{shifted geometric quantization} \cite{Safronov1}. While it is logically independent to skein-theoretic applications, we find it nonetheless conceptually clarifying. Furthermore, understanding the semi-classical picture, or $q \to 1$ limit, in terms of shifted symplectic geometry will be useful for future applications (and for some concrete computations, see Section \ref{sec:examples}). We will use freely the notions discussed in \textit{loc.\ cit}, with the relevant shifts being $n = -1,0,1,2$. 

\subsection{3-manifolds}\label{subsec:3-mfds}
The fundamental symplectic geometry picture was already leveraged in the proof of finiteness for skein modules without defects \cite{GJS}. We consider a closed oriented 3-manifold $M$ and a defect knot $K \subset M$, whose tubular neighborhood we denote by $\nu(K)$. We make an identification of $\partial\, \nu(K) \simeq T^2$ with the 2-torus, and we write $\mathring{M} := M \setminus \nu(K)$ for the knot complement. By restricting local systems on $\mathring{M}$ and $\nu(K)$ to their boundary $T^2$, we obtain the fundamental diagram
\begin{equation}\label{eq:intersection-Langrangian-diagram}
\begin{tikzcd}
	{\mathrm{Loc}_G(\mathring{M})} && {\mathrm{Loc}_G(\nu(K))} \\
	& {\mathrm{Loc}_G(T^2)}
	\arrow["a"', from=1-1, to=2-2]
	\arrow["b", from=1-3, to=2-2]
\end{tikzcd}
\end{equation}
The target of the maps $a$ and $b$, the moduli of Betti $G$-local systems on $T^2$, can be regarded as a double loop space of the 2-shifted symplectic stack $\mathrm{B}G$. As such, it acquires a 0-shifted symplectic structure by transgression (the Atiyah--Bott--Goldman symplectic structure), and it is well-known that the morphisms $a$ and $b$ are naturally equipped with 0-shifted Lagrangian structures. The Lagrangian intersection of $a$ and $b$ gives the moduli of Betti $G$-local systems on $M$ itself 
\begin{equation} \label{eq:inter-Langrangian}
    \mathrm{Loc}_G(M) \simeq \mathrm{Loc}_G(\mathring{M}) \times_{\mathrm{Loc}_G(T^2)} \mathrm{Loc}_G(\nu(K))
\end{equation}
which is then equipped with a $(-1)$-shifted symplectic structure (the Batalin–Vilkovisky symplectic structure). By the principle of $(-1)$-shifted deformation quantization, one attaches a vector space invariant to $\mathrm{Loc}_G(M)$, which in skein theory is interpreted as the skein module $\sk_G(M)$. 

The starting point for the present project can then be phrased as the following question: how can we replace the $0$-shifted Lagrangian morphism $b$ with another in order to obtain vector space invariants of the knot $K \subset M$? Given such a morphism $b': L \to \mathrm{Loc}_G(T^2)$, we may then form the \textit{skein module with line defect}
$$\text{``}\sk_{G,L}(M, K)\text{"} := \text{geom. quantization of }\mathrm{Loc}_G(\mathring{M}) \times_{\mathrm{Loc}_G(T^2)}  L.$$
By AKSZ transgression, there are three simple options to try:
\begin{enumerate}
    \item (Transgressing twice). Start with a 2-shifted Lagrangian $L' \to \mathrm{B} G$. Double looping, we can replace $b$ by the $0$-shifted Lagrangian 
    $$b': L =\mathrm{Map}(T^2, L') \to \mathrm{Loc}_G(T^2).$$ The only examples of such Lagrangians known to us arise from parabolic subgroups: if $P \subset G$ is a parabolic subgroup with Levi subgroup $H$, then $\mathrm{B}P \to \mathrm{B}H \times \mathrm{B}G$ carries the structure of a 2-shifted Lagrangian. The \textit{skein modules with parabolic defects} proposed and studied by \cite{BrownJ}\footnote{More precisely, in \cite{BrownJ} what was studied was skein theory with codimension 1 defects, or interfaces. In the case when the interface is a $T^2$ and bounds a solid torus, one may regard it as a line defect from the perspective of the bulk.} are skein modules with line defect obtained by a quantization of Lagrangians intersections of this form. 
    \item (Transgressing once). Start with a 1-shifted Lagrangian $L' \to \mathrm{Map}(\mathbb{S}^1,\mathrm{B}G) \simeq G/G$; by \cite[\S 2.3]{Safronov1}, this is equivalent to giving a \textit{quasi-Hamiltonian $G$-action}. Looping once, we can replace $b$ by the $0$-shifted Lagrangian 
    $$b': L = \mathrm{Map}(\mathbb{S}^1,L') \to \mathrm{Map}(\mathbb{S}^1, G/G) \simeq \mathrm{Loc}_G(T^2)~.$$
    The simplest example of such is when $L' = \mathrm{B}G \to G/G$ by the inclusion at the identity (or more generally, the inclusion of a conjugacy class). Then the morphism $b'$ recovers the Lagrangian $b$ of \eqref{eq:intersection-Langrangian-diagram}, and we obtain skein theory with the transparent defect.
    \item (Transgressing zero times). This is the case studied in the present article: one directly looks for a 0-shifted Lagrangian inside $b: L \to \mathrm{Loc}_G(T^2)$ without transgression. Those that are relevant for our skein modules with QSP defects are obtained, essentially, by a derived analogue of an observation of Beauville \cite{Beauville}: the fixed points of an antisymplectic involution on a symplectic manifold form a Lagrangian submanifold. 

    In forthcoming work of the first author with Š. Kaubrys \cite{EricSarunas}, we explain and provide examples of Beauville's observation in the derived setting, among which we may consider the following. Consider a group involution $\theta$ of $G$, and the \textit{orientation reversing} automorphism of $T^2$ given by the matrix $\gamma = \begin{bmatrix} 1 & 0\\0 & -1\end{bmatrix}$. By precomposing with $\gamma$ and postcomposing with $\theta$, we obtain an \textit{antisymplectic involution} $\Theta$ on $\mathrm{Loc}_G(T^2)$, whose fixed point stack is a 0-shifted Lagrangian 
    $$b': L =  \mathrm{Loc}_G(T^2)^\Theta \to \mathrm{Loc}_G(T^2).$$ The skein modules with QSP line defects studied in the present article are obtained by a quantization of Lagrangian intersections of this form.
\end{enumerate}

Let us focus now on Case (3) since it is of most relevance for us, and describe concretely the $(-1)$-shifted symplectic stack our skein module with defect quantizes. The analogue of \eqref{eq:inter-Langrangian} is now 
\begin{equation}\label{eq:intersection-our-Langrangian}
    \mathrm{Loc}_{G,\theta}(M,K) := \mathrm{Loc}_G(\mathring{M})\times_{\mathrm{Loc}_G(T^2)} \mathrm{Loc}_G(T^2)^\Theta,
\end{equation}
whose underlying classical stack parametrizes $G$-local systems $\rho$ on $\mathring{M}$ whose restriction to the boundary $T^2$ is equipped with an isomorphism
\begin{equation} \label{eq:moduli-description}
    \iota: \rho \circ \gamma \simeq \theta \circ \rho.
\end{equation}
Note that in particular, if we take the loop $\ell \in \pi_1(\mathring{M})$ which traverses $T^2$ along a meridian (\ie in the direction where $\gamma$ has eigenvalue 1), the previous equation implies that $\rho(\ell)$ lies in (a conjugate of) the symmetric subgroup $G^\theta$. Similarly, the loop $\ell' \in \pi_1(\mathring{M})$ which traverses $T^2$ along the longitude (\ie in the direction where $\gamma$ has eigenvalue $-1$), the previous equation implies that $\theta(\rho(\ell'))$ is conjugate to $\rho(\ell')^{-1}$ in $G$.

\begin{rem}\label{rem:equiv-moduli-space}
    This description is compatible with the equivariant setting developed in \S \ref{subsec:equivariant-sk}: suppose the 3-manifold with defect $(M,K)$ was obtained from a $\Z_2$-Galois action on some closed oriented 3-manifold $N$ with ramification link $K$. Near the preimage of $\nu(K)$ in $N$, the Galois action is exactly described by the matrix $\gamma$. Consequently the moduli stack $\mathrm{Loc}_{G,\theta}(M)$ can be equivalently described as the moduli stack of $\Z_2$-equivariant maps from $N$ to $\mathrm{B}G$, intertwining the action of $\theta$ on the target:
$$\mathrm{Loc}_{G,\theta}(M,K) \simeq \mathrm{Loc}_G(N)^{\Z_2}.$$
Since $\mathrm{Loc}_G(N)$ is $(-1)$-shifted symplectic and $\Z_2$ acts symplectically, by a ``$+$-version" of Beauville's observation \cite{EricSarunas}, we may deduce that the above moduli stacks are equipped with $(-1)$-shifted symplectic structures as well. 
\end{rem}

The algebraic ring of functions on $\mathrm{Loc}_{G,\theta}(M)$ thus gives a $q \to 1$ degeneration for the defect skein module $\sk_{G,\theta}(M,K)$, just as the algebraic ring of functions on $\mathrm{Loc}_G(G)$ gives a $q \to 1$ degeneration for the skein module without defect, as follows. Given a $(G,\theta)$-labeled ribbon graph $\Gamma$ in $(M,K)$, we regard its connected components as elements of the fundamental group of the knot complement $a_1, \cdots \, a_k, b_1, \ldots b_s \in \pi_1(\mathring{M})$ (where the $a$'s are bulk components and the $b$'s are defect components) colored by objects $V_1, \ldots, V_k$ of $\mathrm{Rep}_q(G)$ and $W_1, \ldots, W_s$ of $\mathrm{Rep}_{\mathbf{c,s}}(G^\theta)$ for the bulk and defect components respectively (we push off its defect component using the framing so that it lies on $\partial \mathring{M}$). We may thus construct an algebraic function on
$\mathrm{Loc}_{G,\theta}(M,K)$ by
\begin{equation}f_\Gamma(\rho, \iota) := \mathrm{tr}_\rho(\Gamma) = \sum_{i=1}^k \, \mathrm{tr}_{V_i}(\rho(a_i)) + \sum_{j=1}^s \mathrm{tr}_{W_j}(\rho(b_j)),
\end{equation}
where the trace of the $\rho(b_j)$'s on $W_j$ make sense since $\rho(b_j) \in G^\theta$ up to conjugacy by the discussion following \eqref{eq:moduli-description}.

\subsection{2-manifolds} The picture for surfaces is completely analogous, and we spell it out for clarity. We consider a closed oriented surface $\Sigma$ and defect points $x = \{x_1, \ldots, x_k\} \subset \Sigma$, whose tubular neighborhood $\nu(x)$ is a disjoint union of $k$ 2-disks. We write $\mathring{\Sigma} = \Sigma \setminus \nu(x)$, whose boundary is given by $k$ disjoint circles. The analogue of the fundamental diagram \eqref{eq:intersection-Langrangian-diagram} is
\begin{equation}
    \begin{tikzcd}
	{\mathrm{Loc}_G(\mathring{\Sigma})} && {\mathrm{Loc}_G(\nu(x))} \\
	& {\mathrm{Loc}_G(\partial \mathring{\Sigma}) \simeq (G/G)^{\sqcup k}}
	\arrow["a"', from=1-1, to=2-2]
	\arrow["b", from=1-3, to=2-2]
\end{tikzcd}
\end{equation}
In this case, the numerology of shifted symplectic geometry goes up by 1: the target of the maps $a$ and $b$ can be regarded as a loop space of the 2-shifted symplectic stack $\mathrm{B}G$, and as such it acquires a 1-shifted symplectic structure. It is well-known that the morphisms $a$ and $b$ are naturally equipped with 1-shifted Lagrangian structures (via the Alekseev--Malkin--Meinrenken quasi-Hamiltonian moment map, see \S 9 of \cite{AMM97}), and consequently the Lagrangian intersection of $a$ and $b$ gives the moduli of Betti $G$-local systems on $\Sigma$ itself
\begin{equation} \label{eq:intersection-Langrangian-2}
    \mathrm{Loc}_G(\Sigma) \simeq \mathrm{Loc}_G(\mathring{\Sigma}) \times_{\mathrm{Loc}_G(\partial \mathring{\Sigma})} \mathrm{Loc}_G(\nu(x)),
\end{equation}
along with its 0-shifted (Atiyah--Bott--Goldman) symplectic structure. By the principle of 0-shifted deformation quantization, one attaches a (1-)categorical invariant to $\mathrm{Loc}_G(\Sigma)$, which in skein theory is interpreted as the skein category $\sk_G(\Sigma)$.

Again, one may pose the following question: how can we replace the 1-shifted Lagrangian morphism $b$ to obtain (1-)categorical invariants of the defect surface $(\Sigma, x)$? Given such a Lagrangian $b': L \to \mathrm{Loc}_G(\partial \mathring{\Sigma})$, we may then form the \textit{skein category with point defect}
$$\text{``}\sk_{G,L}(\Sigma, x)\text{"} := \text{geom. quantization of }\mathrm{Loc}_G(\mathring{\Sigma}) \times_{\mathrm{Loc}_G(\partial \mathring{\Sigma})}  L.$$

By AKSZ  transgression, there are two simple options to try:
\begin{enumerate}
    \item (Transgressing once). Start with a 2-shifted Lagrangian $L' \to \mathrm{B}G$, we may loop once to obtain a 1-shifted Lagrangian 
    $$b': L = \mathrm{Map}(\mathbb{S}^1, L') \to \mathrm{Map}(\mathbb{S}^1, \mathrm{B}G) = G/G.$$
    Using again the 2-shifted Lagrangians afforded by parabolic subgroups, the aforementioned work of Brown--Jordan \cite{BrownJ} quantizes the resulting Lagrangian intersection into \textit{skein categories with parabolic defects}. 
    \item (Transgressing zero times). This is the case studied in the present article: one directly looks for a 1-shifted Lagrangian in $G/G$. As mentioned in the previous section, such Lagrangians can be produced form quasi-Hamiltonian moment maps, but for the QSP defect we apply once more the derived analogue of Beauville's observation \cite{EricSarunas}: with the group involution $\theta$ of $G$ and the \textit{orientation reversing} automorphism given by inversion on the circle, we obtain an \textit{antisymplectic involution} $\Theta$ on $\mathrm{Loc}_G(\partial \mathring{\Sigma})$ whose fixed point stack is a 1-shifted Lagrangian
    $$b': L = \mathrm{Loc}_G(\partial \mathring{\Sigma})^\Theta \longrightarrow \mathrm{Loc}_G(\partial \mathring{\Sigma}).$$
    The skein categories with QSP defects studied in the present article are obtained by a quantization of Lagrangian intersections of this form.
\end{enumerate}
Focusing on Case (2), the analogue of \eqref{eq:intersection-our-Langrangian} is now
\begin{equation}\label{equation intersection of our Lagrangians 2}
    \mathrm{Loc}_{G,\theta}(\Sigma, x):=  \mathrm{Loc}_G(\mathring{\Sigma}) \times_{\mathrm{Loc}_G(\partial\mathring{\Sigma})} \mathrm{Loc}_G(\partial \mathring{\Sigma})^\Theta,
\end{equation}
whose underlying classical stack parametrizes $G$-local systems $\rho$ on $\mathring{\Sigma}$ whose restriction to the boundary is equipped with an isomorphism
\begin{equation}
    \iota: \rho \circ \mathrm{inv} \simeq \theta \circ \rho.
\end{equation}
Note that in particular, if we take a monodromy loops $\ell$ around a boundary component of $\mathring{\Sigma}$, then the previous equation implies that $\theta(\rho(\ell))$ is conjugate to $\rho(\ell)^{-1}$ in $G$.

\begin{rem}
    The description is compatible with the equivariant setting developed in \S\ref{subsec:equivariant-sk}: suppose the surface with defect $(\Sigma, x)$ was obtained from a $\Z_2$-branched cover on some closed oriented surface $\widetilde{\Sigma}$, branching exactly at $x$. The moduli stack $\mathrm{Loc}_{G,\theta}(\Sigma, x)$ can be equivalently described by the moduli stack of $\Z_2$-equivariant maps from $\widetilde{\Sigma}$ to $\mathrm{B}G$, intertwining the action of $\theta$ on the target:
    $$\mathrm{Loc}_{G,\theta}(\Sigma, x) \simeq \mathrm{Loc}_G(\widetilde{\Sigma})^{\Z_2}.$$
\end{rem}

The $q \to 1$ degeneration for skein categories can be directly understood in terms of factorization homology:
$$\int_{\Sigma} \, \mathrm{Rep}_q(G) \overset{q \to 1}{\longrightarrow} \int_{\Sigma} \mathrm{Rep}(G) \simeq \mathrm{QC}(\mathrm{Loc}_G(\Sigma)).$$
We can proceed analogously for defects yielding
$$\int_{(\Sigma, x)} (\mathrm{Rep}_q(G), \mathrm{Rep}_{\mathbf{c,s}}(G^\theta)) \overset{q \to 1}{\longrightarrow} \int_{(\Sigma, x)} (\mathrm{Rep}(G), \mathrm{Rep}(G^\theta)) \simeq \mathrm{QC}(\mathrm{Loc}_{G,\theta}(\Sigma, x)).$$

\section{Examples} \label{sec:examples}

In this final section, we perform some experimental computations using defect skein theory from quantum symmetric pairs, and relate them to more classical constructions whenever possible.

\subsection{Electric 1-form symmetry}

We analyze first a degenerate case, which was first introduced in \cite[Example 3.6]{JLanglands}. We consider a central element $J \in G$ and take the quantum symmetric pair associated to the involution $\theta = \mathrm{Ad}_J$. Evidently, the fixed subgroup $G^\theta = G$ is the whole group, but the associated braided module $\mathcal{M} = \mathrm{Rep}_q(G)$ may be nontrivial: if $\sigma$ is the braiding on the braided category $\mathcal{A} = \mathrm{Rep}_q(G)$, then the braided module structure is given by
\begin{equation}
    e_{a \otimes m} = (\mathrm{id}_a \otimes J) \circ \sigma_{m \otimes a} \circ \sigma_{a \otimes m}
\end{equation}
for $a \in \A$ and $m \in \M$. The $q \to 1$ limit of the quantum moment map $\mu: \mathcal{O}_q(G) \to \mathcal{O}_q(G^\theta \backslash G) \simeq \C[q]_{(q-1)}$ is induced by the inclusion of the inclusion of the point $\{J\} \hookrightarrow G$. Given a defect knot $K$ inside a closed 3-manifold $M$, the associated defect skein module is exactly the twisted skein modules of Definition 3.8 in \textit{loc. cit}. In particular, assuming that $\DqG$-modules of geometric origin are holonomic, we may conclude from Corollary \ref{cor:finiteness-defect-skmod} that defect $G$-skein modules are finite dimensional.

\subsection{Dichromatic link invariants}\label{subsec:dichromatic}

Let us focus consider the case $G = \mathrm{SL}_2$ with the quantum symmetric pair defined by
$$J^t = q^{-1}\cdot \begin{bmatrix} t+t^{-1} & \sqrt{-1}\\ \sqrt{-1} & 0\end{bmatrix}$$
where $t \in R = \C[q^{\pm 1/2}]_{(q-1)}$ is an element specializing to (a fixed choice of) $\sqrt{-1}$ as $q \to 1$. The associated symmetric subgroup is conjugate to the diagonal torus, the fixed subgroup of $\theta = \mathrm{Ad}_J$. In the bulk, we impose the usual Kauffman bracket skein relations (see Definition \ref{definition KB skein relations}) along with the defect skein relations
\begin{gather} \label{eq:SL2-defect-relation-again}
 \brmodstrandsunoriented\; + q^{-2} \;  \invbrmodstrandsunoriented \;=\;q^{-1}(t+t^{-1})\;\parallelstrands{strand}{redstrand} \, , \quad \quad \brmodtraceunoriented = (t+t^{-1})  ~  \, \singlestrand{redstrand} 
\end{gather}

\begin{example}\label{eg:unknot}
    We may consider the simplest case of a defect unknot $K$ in the 3-sphere $M = \mathbb{S}^3$. Then the Kauffman bracket skein relations and the local defect relations \eqref{eq:SL2-defect-relation-again} are sufficient for the evaluation of a 2-variable knot polynomial in the variables $q,t$ in $\mathbb{S}^3 \setminus K$, \ie links in the solid 3-torus $\mathbb{S}^1 \times \mathbb{D}^2$. In particular, 
    $$\mathrm{dim}_k \, \sk(\mathbb{S}^3,K) = 1.$$
    Such \textit{dichromatic knot invariants} were previously studied by Hoste--Kidwell \cite{Hoste-Kidwell}, Lambropoulu \cite{Lambropoulou} and Hoste--Przytycki \cite{Hoste-Przytycki}. We point out, as was already observed in \cite[Corollary 3.8]{Hoste-Kidwell}, that if we only impose the Kauffman bracket skein relations in the bulk one does \textit{not} obtain a finite dimensional skein module. 
\end{example}

\subsection{Type AIII}
\label{subsec:AIII}
More generally, we consider the group $G = \mathrm{GL}_n$ with the symmetric subgroup $G^\theta = \mathrm{GL}_a \times \mathrm{GL}_b$ with $a+b = n$. 
The $J$-matrix constructed in \cite{Jordan-Ma} of the associated quantum symmetric pair depends on a specializable parameter $s \in R$ and is given by 
\begin{equation}\label{eq:Js}
J^s = \sum_{1\leq k\leq a}(s-s^{-1})E_k^k - \sum_{a+1\leq k\leq b} s  E^k_k + \sum_{1\leq k\leq a} (E_k^{n-k+1} + E_{n-k+1}^k)\end{equation}
Note that at $s = 1$, the matrix $J^s$ is conjugate to $\mathrm{diag}(\mathrm{Id}_a, -\mathrm{Id}_b)$, so the fixed subgroup is indeed conjugate to the block-diagonally embedded $\mathrm{GL}_a \times \mathrm{GL}_b \subset G$. A similar construction can be given for $\mathrm{SL}_n$ when $b$ is even. The associated Satake diagram of this symmetric subgroup is $X = \{a+1, \ldots, b\}$ with $\tau = \tau_0$ and thus it is an example of an inner symmetric pair. 

We present a diagrammatic approach to skein modules with such AIII type defects for $G=\GL_n$ by extending the $\GL_n$-diagramatics which involve $\GL_n$-webs as follows. 

\begin{definition}\label{def:web-skein}
    Let $M$ be an oriented 3-manifold with an embedded link $K \subset M$. A \textit{basic $(\GL_n, \GL_a\times \GL_b)$-skein} is an embedded oriented ribbon graph in $M\setminus K$ with two types of edges (solid and dotted). Vertices are given of the following three types:
\begin{equation}
    \begin{tikzpicture}[scale = 1.5, baseline={([yshift=-.5ex]current bounding box.center)}]
    \draw[->-=.5] (0,0) -- (.5,.5);
    \draw[->-=.5] (1,0) -- (.5,.5);
    \draw[style={dash pattern=on 0pt off 1,
line cap=round},->-=.5] (.5,.5) -- (.5,1);
    \filldraw (.5,.5) circle (1 pt);
    \node at (.5,0.1) {\scriptsize$\dots$};
    \end{tikzpicture}~,
    \quad\quad
    \begin{tikzpicture}[scale=1.5, baseline={([yshift=-.5ex]current bounding box.center)}]
    \draw[->-=.5] (.5,.5) -- (0,1);
    \draw[->-=.5] (.5,.5) -- (1,1);
    \draw[style={dash pattern=on 0pt off 1,
line cap=round},->-=.5] (.5,0) -- (.5,.5);
    \filldraw (.5,.5) circle (1 pt);
    \node at (.5,0.9) {\scriptsize$\dots$};
    \end{tikzpicture} ~.
\end{equation}
\end{definition}

We will use the following conventions on quantum integers: for $n \geq 0$, we write
$$[n] = \frac{q^n - q^{-n}}{q - q^{-1}}\, \text{ and } [n]! := [n] [n-1] \cdots [1].$$
\begin{prop} \label{prop:AIII-relations}
    The defect skein module $\sk_{(\GL_n, \GL_a\times \GL_b)}(M,K)$ is spanned by basic $(\GL_n,\GL_a\times \GL_b)$-skeins modulo the following skein relations: 
    \begin{gather} 
    \overcross{strand}{strand}
  \;-\;
  \undercross{strand}{strand}
  \;=\; \left(q-q^{-1}\right)\;
  \parallelstrands{strand,oriented}{strand,oriented}, \qquad  q^{-n}\overcross{dstrand}{dstrand}
  \;=\; \parallelstrands{dstrand,oriented}{dstrand,oriented} \;=\;
  q^{n}\undercross{dstrand}{dstrand}, \notag \\  \notag
  \circlestrand{strand}{ccw} = [n] \, \varnothing, \qquad \loopstrandJ{strand} = q^n \singlestrand{strand,oriented}, \qquad \circlestrand{dstrand}{ccw} =  \varnothing, \qquad  \loopstrandJ{dstrand} = q^{n} \singlestrand{dstrand,oriented},\\ \overcross{strand}{dstrand}
   \;=\;
  q^{2}\undercross{strand}{dstrand}, \qquad \overcross{dstrand}{strand}
 \;=\;
  q^{2}\undercross{dstrand}{strand},\label{eq:N-skein-relations}
\\ \notag
  \NtoNvertex{N} = \quad \frac{q^{\binom{n}{2}}}{[n]!} \sum_{\sigma \in S_n} (-q)^{-\ell(\sigma)} \, \Nbox{\sigma}, \notag\\ 
  \brmodstrands\; - \;\invbrmodstrands \;=\; (s-s^{-1})\;\parallelstrands{strand,oriented}{redstrand}\;, \qquad \detbrmodstrands ~=~ \detq(J^s) \parallelstrands{dstrand,oriented}{redstrand}~, \label{eq:N=p+q-skein-relations}\\ \notag
  \brmodtrace = \trq(J^s) ~ \singlestrand{redstrand} 
\end{gather}
\end{prop}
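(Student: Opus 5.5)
The plan has two parts. First, the bulk relations in \eqref{eq:N-skein-relations} follow from the known web presentation of $\Rep_q(\GL_n)$. Second, the defect relations in \eqref{eq:N=p+q-skein-relations} are checked by evaluating the universal $K$-matrix on the generating objects, after which we show these relations suffice.

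\textbf{Bulk relations and spanning.} The $\Rep_q(\GL_n)$-skein category away from $K$ is generated by the fundamental representation $V=\C^n$ (solid strand) and the quantum determinant $\det_q=\Lambda^n_qV$ (dotted strand). Every type I irreducible is a summand of some $V^{\otimes r}\otimes\det_q^{\otimes m}$, with $m$ possibly negative via duals. Cutting along coupons then shows that skeins colored by $V$ and $\det_q$ span. The first three lines are the standard Hecke/HOMFLY relation, the framing values, and the quantum dimensions for $V$ and $\det_q$. The mixed crossings $V$--$\det_q$ are scalar because $\det_q$ is invertible, and the scalar $q^{2}$ (in the paper's normalization) comes from evaluating the $R$-matrix on $V\otimes\det_q$. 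The $N$-vertex relation says the composite $V^{\otimes n}\to\det_q\to V^{\otimes n}$ is the $q$-antisymmetrizer, normalized by $q^{\binom n2}/[n]!$. These are the usual $\GL_n$-web relations, which I would cite, for example via Cautis--Kamnitzer--Morrison skew Howe duality, rather than re-derive. Completeness of the bulk relations is part of that citation.

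\textbf{Defect relations.} By Remark~\ref{rem:K-matrix-vs-J-matrix} and the quantum moment map description in \eqref{eq:qmm-explicit}, encircling the red strand by a $V$-strand acts on $V\lact\unit_\M$ by $\mathcal{J}_V=J^s$. Hence:
\begin{itemize}
\item The trace relation is the quantum trace $\trq(J^s)$, obtained by closing up.
\item The dotted relation is $\mathcal{J}$ on $\det_q$. Using $\Delta(\mathcal{J})=\mathcal{R}_{21}\mathcal{J}_2\mathcal{R}_{12}\mathcal{J}_1$ iterated $n$ times and projecting to $\Lambda^n_q$, this gives $\detq(J^s)$.
\item The main defect relation $\mathcal{K}-\mathcal{K}^{-1}=(s-s^{-1})\id$ on a single $V$-strand is equivalent to $J^s$ satisfying the quadratic Hecke relation $(J^s)^2-(s-s^{-1})J^s-1=0$, perhaps after the appropriate $q$-twist from the crossings drawn. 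I would verify this directly from \eqref{eq:Js}. Each $k\le a$ block $\begin{bmatrix}s-s^{-1}&1\\1&0\end{bmatrix}$ has eigenvalues $s,-s^{-1}$. The middle diagonal entries must also be roots of the quadratic; the displayed $-s$ looks like a normalization to reconcile, so I would recheck \eqref{eq:Js} against \cite{Jordan-Ma}.
\end{itemize}
These local relations are all instances of $\ker(\mathrm{ev}_\AM)$ from Proposition~\ref{prop:AM-evaluation-functor}, so they hold in $\sk_{(\GL_n,\GL_a\times\GL_b)}(M,K)$.

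\textbf{Completeness (main obstacle).} It remains to show that these relations generate all of $\mathrm{lr}_\AM$. I would reduce to the defect disk $\mathbb{D}_*$ and prove that the quotient of $\operatorname{Rib}_\AM(\mathbb{D}_*)$ by the displayed relations maps isomorphically onto $\M$ on hom spaces $\Hom(V^{\otimes r}\lact\unit_\M,V^{\otimes r'}\lact\unit_\M)$. For spanning, the Hecke relation lets any winding of a strand around the defect be reduced to at most one loop, and the trace and determinant relations remove closed components. Every diagram then reduces to words in the affine Hecke algebra of type $\mathrm{C}$ with one boundary generator $\mathcal{K}$ satisfying a quadratic relation. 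For injectivity, I would compare dimensions. By a Schur--Weyl-type result for the coideal (a type $\mathrm{B}/\mathrm{C}$ Hecke algebra quotient acting on $V^{\otimes r}$ restricted to $\mathcal{U}_{\mathbf{c},\mathbf{s}}\mathfrak{gl}_n^\theta$), the two-parameter Hecke algebra surjects onto $\End_\M(V^{\otimes r}\lact\unit_\M)$. The kernel is governed by the dotted/antisymmetrizer relations, in the same way as in type A. This comparison is where I expect the real work; I would first check it generically at $q\to1$ using the $\GL_a\times\GL_b$ branching of $V^{\otimes r}$, then lift by flatness.
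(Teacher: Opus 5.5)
Your outline has the same skeleton as the paper's proof. You cite Cautis--Kamnitzer--Morrison for the bulk, then compare the defect web category with $\Rep_{q,s}(\GL_a\times\GL_b)$ through $(\epsilon_1,\dots,\epsilon_m)\mapsto(V^{\epsilon_1}\otimes\cdots\otimes V^{\epsilon_m})\lact\unit$, with essential surjectivity because $\unit$ and $V$ generate.

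The two arguments differ in how they handle full faithfulness. The paper checks only $\End(+)$: $\End_{\GL_a\times\GL_b}(V)$ is two-dimensional with basis $\id,J^s$, and tangle resolution bounds the web side by two. All other hom spaces are left implicit. Your type~B Hecke algebra argument on $r$ strands does more. It gets spanning from words in $T_1,\dots,T_{r-1},\mathcal{K}$, and surjectivity onto $\End_\M(V^{\otimes r}\lact\unit)$ from the hyperoctahedral Schur--Weyl duality for $\GL_a\times\GL_b$ at $q=1$ plus semicontinuity. That is what a complete proof actually requires.

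In your injectivity step, however, the kernel is not controlled ``as in type A''. Consider idempotents for bipartitions $(\lambda,\mu)$ with $\ell(\lambda)>a$ or $\ell(\mu)>b$ but $\ell(\lambda)+\ell(\mu)\le n$. These are not killed by the bulk relations together with the quadratic relation for $\mathcal{K}$. The reason is that those relations also hold in the analogous categories where $\mathcal{K}$ has an $s$-eigenspace of dimension $k\neq a$ on $V$, and for suitable $k$ such idempotents survive there. So your dimension bound must genuinely use the scalar relations $\detq(J^s)$ and $\trq(J^s)$ around the defect; these are what pin down $a$.

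Finally, your suspicion about \eqref{eq:Js} is justified. For $a<b$ the diagonal entries $-s$ are not roots of $x^2-(s-s^{-1})x-1$, so the first relation in \eqref{eq:N=p+q-skein-relations} fails for the printed matrix. They should be $-s^{-1}$, which matches the factor $s^{-(n-2a)}$ in the paper's $\detq(J^s)$. With that correction, the paper's $\trq(J^s)$ formula would need $-s^{-1}[n-2a]$ in place of $-s[n-2a]$.
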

\begin{proof}
    Let $\operatorname{Web}_n$ denote the ribbon category consisting of objects which are oriented points on the negative $x$-axis of the 2-disk, and whose morphisms are oriented embedded tangles modulo the above given relations involving only black strands. It is known that the Cauchy completion $\widehat{\mathrm{Web}}_n$ for $q$ generic is equivalent to $\Rep_q(\GL_n)$ (see \cite{CKM}).
    
    We repeat this argument for the associated braided module category as follows. Let $\operatorname{Web}_{n=a+b}$ be the category whose objects are oriented points on the negative $x$-axis of the 2-disk represented as ordered tuples $(\epsilon_1,\dots, \epsilon_m)$ of signs $\epsilon_i\in \{\pm\}$ and its morphisms are oriented tangles embedded $\mathbb{D}\setminus\{0\}$ modulo the above given relations. This inherits the structure of a braided module category over $\operatorname{Web}_{n}$ and we claim that its Cauchy completion $\widehat{\operatorname{Web}}_{n=a+b}$ is equivalent to $\Rep_{q,s}(\GL_a \times \GL_b)$.  

    Indeed, as in the non-defect case we have a functor 
    \[F:\widehat{\operatorname{Web}}_{n=a+b}\to \Rep_{q,s}(\GL_a\times \GL_b)\] determined by sending $(\epsilon_1,\dots, \epsilon_m)\in \operatorname{Web}_{n=a+b}$ to $(V^{\epsilon_1}\otimes \cdots \otimes V^{\epsilon_m})\lact \mathbf{1}$ where $V$ denotes the fundamental $\GL_n$-representation. 
    Essential surjectivity of $F$ follows from the fact that the trivial representation is a $\Rep_q(\GL_n)$-generator and $V$ is a $\otimes$-generator for $\Rep_q(\GL_n)$. 
    For full faithfulness consider first the fundamental representation $V$, which decomposes into simple objects $V\cong \C^a\oplus \C^b$ as an object in $\Rep_{q,s}(\GL_a \times \GL_b)$. The endomorphism algebra $\End_{\GL_a\times \GL_b}(V)$ is 2-dimensional and it is easy to see that $\id$ and $J^s$ form a basis of it. The map \[F:\End_{\operatorname{Web}_{n=a+b}}(+)\to \End_{\GL_a\times \GL_b}(V)\] is bijective. Surjectivity is immediate from the two basis elements and injectivity follows from showing that the endomorphism ring $\End_{\operatorname{Web}_{n=a+b}}(+)$ is 2-dimensional and by resolving any ribbon tangle using the relations. 
\end{proof}

The first equation in \eqref{eq:N=p+q-skein-relations} is the Hecke relation $J^s\sim s$. The latter relations\footnote{Here $\detq(J^s)$ resp.\ $\trq(J^s)$ denote the evaluation of the character $\chi_{J^s}: \Oq(\GL_n)\rightarrow \C[q^\pm, s^\pm], a^i_j \mapsto J^i_j$ to the elements $\detq(A)$ resp.\ $\trq(A)$ in $\Oq(\GL_n)$.} involving $\detq(J^s)$ and $\trq(J^s)$ further depends on the type $(a,b)$ which are determined as follows: 
\begin{lem}
Let $n = a+b$ with $a \leq b$. We have
\begin{equation*}
\detq(J^s)=\begin{cases}
     (-1)^a q^{n}& \text{if }a = b\\
      (-1)^{b} q^{2ab}s^{-(n-2a)} & \text{if }a < b \\
    \end{cases} 
\end{equation*}
and
\begin{equation*}
\trq(J^s)=\begin{cases}
     (s-s^{-1})q^a[a]& \text{if }a=b\\
     (s-s^{-1})q^a[b] - s[n-2a] & \text{if }a < b \\
    \end{cases} 
\end{equation*}~.
\end{lem}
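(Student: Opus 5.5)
We evaluate the character $\chi_{J^s}$ on the quantum determinant and quantum trace of the generic matrix $A=(a^i_j)$ of $\Oq(\GL_n)$. The key observation is that $J^s$ is sparse. On the index set it is block-diagonal with respect to the decomposition into the $a$ pairs $\{k,n-k+1\}$ with $1\le k\le a$, together with the $n-2a$ fixed indices $a<k\le n-a$ (on which $J^s$ is diagonal with entry $-s$). Note that when $a=b$ there are no fixed indices. Each pair contributes a $2\times 2$ block $\begin{bmatrix} s-s^{-1} & 1\\ 1 & 0\end{bmatrix}$ in positions $(k,n-k+1)$. I read the second sum in \eqref{eq:Js} as running over $a+1\le k\le n-a$, which is what makes $J^s$ conjugate to $\mathrm{diag}(\mathrm{Id}_a,-\mathrm{Id}_b)$ at $s=1$.

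\textbf{Quantum trace.} I would use the standard formula $\trq(A)=\sum_k q^{2k-n-1}a^k_k$ (or whichever normalization of the $q$-trace, i.e.\ $\mathrm{tr}(K_{2\rho}A)$, the paper's conventions fix). Applying $\chi_{J^s}$ kills the off-diagonal pairing entries. It leaves
\[
(s-s^{-1})\sum_{k=1}^a q^{c_k}\;-\;s\sum_{k=a+1}^{n-a} q^{c_k}.
\]
The second geometric sum is symmetric about zero, so it gives $[n-2a]$. The first sum runs over $k\le a$ and is a one-sided geometric series. Summing it produces $q^{a}[a]$ when $a=b$ and $q^{a}[b]$ (up to a monomial shift) when $a<b$. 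This forces the normalization of $c_k$: I would fix the exponent convention by matching the case $a=b$ and then check the $a<b$ case against it. This is routine once the convention is pinned down.

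\textbf{Quantum determinant.} I would use $\detq(A)=\sum_{\sigma\in S_n}(-q)^{\ell(\sigma)}a^{1}_{\sigma(1)}\cdots a^{n}_{\sigma(n)}$ in the convention matching the antisymmetrizer in Proposition~\ref{prop:AIII-relations}. Since $\chi_{J^s}$ is an algebra map to a commutative ring, each term becomes a product of entries of $J^s$. Such a term is nonzero only if $\sigma$ preserves each block. On the fixed indices $\sigma$ is then the identity, contributing $(-s)^{n-2a}$. On each pair $\{k,n-k+1\}$, $\sigma$ is either the identity, giving a factor $(s-s^{-1})\cdot 0=0$, or the transposition. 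So only the single permutation $\sigma_0$, the product of the $a$ transpositions $(k,\,n-k+1)$, survives. The result is
\[
\detq(J^s)=(-q)^{\ell(\sigma_0)}(-s)^{n-2a}.
\]

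\textbf{Sign and power of $q$.} The remaining step, which I expect to be the main obstacle, is computing $\ell(\sigma_0)$ and reconciling the sign and the power of $q$ with the stated formulas. The length of $\sigma_0$ counts inversions among the nested pairs and between the pairs and the middle block. I expect roughly $\ell(\sigma_0)=\sum_{k=1}^a\bigl(2(n-2k)+1\bigr)$, which should collapse to a closed form like $2ab+a$.

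The stated answers ($q^{n}$ when $a=b$, $q^{2ab}s^{-(n-2a)}$ otherwise) suggest two things. First, the paper's $\detq$ convention carries an extra normalizing power of $q$, or uses $q^{-\ell}$ weights. Second, the $s$-exponent sign depends on whether the sum uses $(-s)$ or its inverse. I would therefore first pin down the exact convention for $\detq$ by checking the case $n=2$, $a=b=1$ directly, where $\detq=a^1_1a^2_2-q\,a^1_2a^2_1$ gives $-q\cdot 1$, up to convention. I would then redo the inversion count carefully, simplify using $n=a+b$, and adjust signs and exponents accordingly.
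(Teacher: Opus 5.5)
Your strategy is the paper's: evaluate the character $\chi_{J^s}$ on explicit formulas for $\detq(A)$ and $\trq(A)$. Your structural analysis is also right. The middle range $a+1\le k\le n-a$ is literally the paper's, since $n-a=b$. Only $\sigma_0=\prod_{k\le a}(k,\,n-k+1)$ survives in the determinant, and the middle trace sum is $[n-2a]$.

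The gap is the last step, where you propose to pin down conventions by matching the printed answer. The conventions are not free. Here $\Oq(\GL_n)$ is the reflection equation algebra $\Ocal_\A$, and the paper uses the Jordan--White determinant $\detq(A)=\sum_\sigma(-q)^{\ell(\sigma)}q^{e(\sigma)}a^1_{\sigma(1)}\cdots a^n_{\sigma(n)}$, with $e$ the number of exceedances. You omit the factor $q^{e(\sigma)}$. The paper pairs this with $\trq(A)=\sum_k q^{n+1-2k}a^k_k$. Moreover $\ell(\sigma_0)=2ab-a$, not $2ab+a$; your own sum $\sum_{k=1}^a(2(n-2k)+1)$ equals $2ab-a$. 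Finally $e(\sigma_0)=a$. Carrying the computation out therefore gives, for all $a\le b$,
\[
\detq(J^s)=(-q)^{2ab-a}q^{a}(-s)^{n-2a}=(-1)^b q^{2ab}s^{n-2a},\qquad \trq(J^s)=(s-s^{-1})q^{b}[a]-s[n-2a].
\]

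These agree with the statement in the sign, in the factor $q^{2ab}$ for $a<b$, and in the trace for $a=b$. The remaining mismatches cannot be absorbed by any normalization:
\begin{itemize}
\item $s^{-(n-2a)}$ versus $s^{n-2a}$: $\detq$ is a polynomial in the entries, and the only surviving monomial contains $(-s)^{n-2a}$, so no convention yields $s^{-(n-2a)}$.
\item $q^a[b]$ versus $q^b[a]$: a sum of $a$ monomials in $q$ is never a monomial times $[b]$ for $b>a$. So your claim that the one-sided sum gives ``$q^a[b]$ up to a monomial shift'' is false.
\item $q^n$ versus $q^{2a^2}$: for $a=b\ge 2$ one gets $(-1)^a q^{2a^2}$, not $(-1)^a q^{n}$.
\end{itemize}
A convention-free check confirms this. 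At $q=1$ every normalization reduces to $\det J^s=(-1)^b s^{n-2a}$ and $\mathrm{tr}\,J^s=a(s-s^{-1})-(b-a)s$. The printed $a<b$ formulas instead specialize to $(-1)^b s^{-(n-2a)}$ and $b(s-s^{-1})-(b-a)s$. So your ``check the $a<b$ case'' step would fail, and tuning conventions to a target is in any case not a proof.

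The printed statement most likely contains typos. The exponent $s^{-(n-2a)}$ corresponds to middle diagonal entries $-s^{-1}$ in \eqref{eq:Js}. That is what the Hecke relation $J-J^{-1}=s-s^{-1}$ of Proposition~\ref{prop:AIII-relations} forces, since $-s$ is not a root of $x^2-(s-s^{-1})x-1$. With that choice, however, the trace must end in $-s^{-1}[n-2a]$. Likewise $q^a[b]$ and $q^n$ should read $q^b[a]$ and $q^{2ab}$. A correct write-up should report the values the computation actually produces.
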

\begin{proof}
We need to evaluate the $\Oq(\GL_n)$-character $\chi_{J^s}(a^i_j):= (J^s)^i_j$ on the elements $\detq(A)$ and $\trq(A)$. These are given by:
\begin{equation*}
\detq(A):=  \sum_{\sigma\in S_n}{(-q)^{l(\sigma)}q^{e(\sigma)}a^1_{\sigma(1)}\cdots a^n_{\sigma(n)}} ~, 
\end{equation*}
where $l(\sigma)$ resp.\ $e(\sigma)$ is the length resp.\ exceedance\footnote{The length $l(\sigma)$ is the number of pairs $i<j$ in $\{1,\cdots,N\}$ such that $\sigma(j)>\sigma(i)$ and the exceedance $e(\sigma)$ is the number of $i\in\{1,\dots, N\}$ such that $\sigma(i)>i$.} of $\sigma$ \cite{JordanWhite},
and
\begin{equation*}
\trq(A) := \sum_{1\leq k\leq n}{q^{n+1-2k}a^k_k}~. 
\end{equation*}
The result follows from using the defining equation \eqref{eq:Js} of $J^s$ through straightforward computations.
\end{proof}

\newcommand{\arxiv}[2]{\href{http://arXiv.org/abs/#1}{#2}}
\newcommand{\doi}[2]{\href{http://doi.org/#1}{#2}}

\end{document}